\numberwithin{equation}{section}
\newtheorem{theorem}{Theorem}[section]
\newtheorem{corollary}[theorem]{Corollary}
\newtheorem{lemma}[theorem]{Lemma}
\newtheorem{condition}{Condition}[section]
\newtheorem{definition}{Definition}[section]
\newtheorem{remark}{Remark}[section]
\newcommand{\f}{\mathscr{F}}
\newcommand{\lr}{\mathcal{L}}
\newcommand{\sr}{\mathcal{S}}
\newcommand{\e}{\mathbb{E}}
\newcommand{\br}{\mathbb{R}}
\newcommand{\pr}{\mathcal{P}}
\newcommand{\dd}{\partial}
\newcommand{\brn}{{\mathbb{R}^n}}
\newcommand{\brd}{{\mathbb{R}^d}}
\newcommand{\de}{\Delta}
\newcommand{\hv}{\widehat{v}}
\newcommand{\tx}{\widetilde{X}}
\newcommand{\tz}{\widetilde{Z}}
\newcommand{\argmin}{\mathop{\rm argmin}}
\title{On Mean Field Monotonicity Conditions from Control Theoretical Perspective}
\author[a]{Alain Bensoussan\footnote{E-mail: axb046100@utdallas.edu}}
\author[b]{Ziyu Huang\footnote{E-mail: zyhuang19@fudan.edu.cn}}
\author[c]{Shanjian Tang\footnote{E-mail: sjtang@fudan.edu.cn}}
\author[b]{Sheung Chi Phillip Yam\footnote{E-mail: scpyam@sta.cuhk.edu.hk}}
\affil[a]{\small \it International Center for Decision and Risk Analysis, Naveen Jindal School of Management, University of Texas at Dallas, Dallas, Texas, USA}
\affil[b]{\small \it Department of Statistics, The Chinese University of Hong Kong, Shatin, N.T., Hong Kong SAR}
\affil[c]{\small \it Department of Finance and Control Sciences, School of Mathematical Sciences, Fudan University, Shanghai 200433 , PRC}
\begin{document}

\maketitle


\medskip

\begin{abstract}
\noindent In this article, from the viewpoint of control theory, we discuss the relationships among the commonly used monotonicity conditions that ensure the well-posedness of the solutions arising from problems of mean field games (MFGs) and mean field type control (MFTC). We first introduce the well-posedness of general forward-backward stochastic differential equations (FBSDEs) defined on some suitably chosen Hilbert spaces under the $\beta$-monotonicity. We then propose a monotonicity condition for the MFG, namely partitioning  the running cost functional into two parts, so that both parts still depend on the control and the state distribution, yet one satisfies a strong convexity and a small mean field effect condition, while the other has a newly introduced displacement quasi-monotonicity. To the best of our knowledge, the latter quasi type condition has not yet been discussed in the contemporary literature, and it can be considered as a bit more general monotonicity condition than those commonly used. Besides, for the MFG,  we show that convexity and small mean field effect condition for the first part of running cost functional and the quasi-monotonicity condition for the second part together imply the $\beta$-monotonicity and thus the well-posedness  for the associated  FBSDEs. For the MFTC problem, we show that the  $\beta$-monotonicity for the corresponding FBSDEs is simply  the convexity assumption on the cost functional. Finally, we consider a more general setting where the drift functional is allowed to be non-linear for both MFG and  MFTC problems. \\

\noindent{\textbf{Keywords:}} Mean field games; Mean field type control problems; Forward-backward stochastic differential equations; Monotonicity conditions; $\beta$-Monotonicity; Small mean field effect; Displacement quasi-monotonicity; Generic drift functions.\\

\noindent {\bf Mathematics Subject Classification (2020):} 60H30; 60H10; 93E20; 35R15. 

\end{abstract}

\section{Introduction}

Mean field games (MFGs) and mean field type control (MFTC) problems have been widely studied in recent years. For each of them, the underlying controlled dynamical system involves the probability distribution of the state, in addition to the state and the control. A MFG is essentially a fixed point problem which was first proposed by Lasry and Lions in a series of papers \cite{CP,JM1,JM2,JM3} and also independently by Huang, Caines and Malhamé \cite{HM1,HM}. In a MFG, the coefficients of the controlled system are affected by the equilibrium probability distribution of the state of the overall population. In contrast,  a MFTC problem is a McKean--Vlasov control problem, the system of which depends on the distribution of the current controlled state. There are numerous works in various settings in this area. For PDE approches to forward-backward system for MFGs, we refer to Bensoussan--Frehse--Yam \cite{AB_book}, Gomes--Pimentel--Voskanyan \cite{GDA}, Graber--M\'{e}sz\'{a}ros \cite{GM}, Huang--Tang \cite{HZ} and Porretta \cite{PA}. For the master equation analytical methods to MFGs, we refer to Cardaliaguet--Cirant--Porretta \cite{CP1}, Cardaliaguet--Delarue--Lasry--Lions \cite{CDLL}, Gangbo--M\'esz\'aros--Mou--Zhang \cite{GW} and Mou--Zhang \cite{Anti_mono}. For probabilistic approaches to MFGs, we refer to Ahuja--Ren--Yang \cite{SA1}, Bensoussan--Tai--Wong--Yam \cite{AB9'}, Bensoussan--Wong--Yam--Yuan \cite{AB10'}, Buckdahn--Li--Peng-- Rainer \cite{BR}, Carmona--Delarue \cite{book_mfg}, Chassagneux--Crisan--Delarue \cite{CJF} and Huang--Tang \cite{HZ1}. For probabilistic approaches to MFTC problems, we refer to Buckdahn--Li--Peng--Rainer \cite{BR}, Cardaliaguet--Delarue--Lasry--Lions \cite{CDLL}, Carmona--Delarue \cite{CR,book_mfg} and Chassagneux--Crisan--Delarue \cite{CJF}. For the dynamic programming principle and HJB equation of McKean--Vlasov control problem, we refer to Djete--Possamai--Tan \cite{DMF} and Pham--Wei \cite{PH}. For the lifting method and a Hilbert space approach for MFTC problem, we refer to Bensoussan--Graber--Yam \cite{AB7}, Bensoussan--Huang--Yam \cite{AB10,AB6,AB8}, Bensoussan--Tai--Yam \cite{AB5} and Bensoussan--Yam \cite{AB}. The MFTC problem is also studied together with the so called ``potential mean field game", which is first observed by Lasry and Lions in \cite{JM3} (although under a different name), and then subsequently well-studied in the literature; see \cite{Briani2017,Cardaliaguet2015,ORRIERI20191868} as examples. 

In this article, we adopt a stochastic control method to discuss monotonicity conditions to solve MFG and MFTC problems with control dependent diffusion functions which can be degenerate. Let $(\Omega,\f,\{\f_t,0\le t\le T\},\mathbb{P})$ be a complete filtered probability space (with the filtration being  augmented by all the $\mathbb{P}$-null sets)  on which an $n$-dimensional Brownian motion $\{B_t,{0\le t\le T}\}$ is defined and is $\f_t$-adapted. We denote by $\mathcal{P}_{2}(\brn)$ the space of all probability measures with finite second-order moments on $\brn$,  equipped with the 2-Wasserstein metric $W_2$. Given the functional coefficients:
\begin{align*}
	&b:[0,T]\times\brn\times\mathcal{P}_{2}(\brn)\times \brd\to \brn,\quad \sigma:[0,T]\times \brn\times\mathcal{P}_{2}(\brn)\times \brd\to \br^{n\times n},\\
	&f:[0,T]\times\brn\times\mathcal{P}_{2}(\brn)\times \brd\to \br,\quad g:\brn\times\mathcal{P}_{2}(\brn)\to \br,
\end{align*}
for $(t,\mu)\in[0,T]\times\pr_2(\brn)$, we choose a random vector $\xi\in L_{\f_t}^2$ independent of the Brownian motion $\mathcal{W}_s^t:=\{B_s-B_t, t\le s\le T\}$ such that $\lr(\xi)=\mu$, then, the limiting problem of MFG can be formulated in the following probabilistic way:
\begin{equation}\label{intro_MFG}
	\left\{
	\begin{aligned}
		&\hv_\cdot\in\argmin_{v_\cdot\in\mathcal{L}_{\mathscr{F}}^2(t,T)}J\left(v_\cdot;\widehat{m}_s,t\le s\le T\right):=\e\left[\int_t^T f\left(s,X^v_s,\widehat{m}_s,v_s\right)dt+g\left(X^v_T,\widehat{m}_T\right)\right],\\
		&X^v_s=\xi+\int_t^sb\left(r,X^v_r,\widehat{m}_r,v_r\right)dr+\int_t^s\sigma\left(r,X^v_r,\widehat{m}_r,v_r\right)dB_r,\\
		&\widehat{m}_s:=\mathcal{L}\left(X^{\hv}_s\right),\quad  s\in[t,T].
	\end{aligned}
	\right.
\end{equation}
From the  stochastic maximum principle \cite{AB11,book_mfg,HZ1,MR1696772}, the solution of MFG \eqref{intro_MFG} (if exits) gives the well-posedness of the following system of forward-backward stochastic differential equations (FBSDEs):
\begin{equation}\label{intro_2}
	\left\{	
	   \begin{aligned}
		   &X_s=\xi+\int_t^s D_pH\left(r,X_r,\lr(X_r),P_r,Q_r\right)dr+\int_t^s D_qH(X_r,\lr(r,X_r),P_r,Q_r)dB_r,\\
		   &P_s=D_x g\left(X_T,\lr(X_T)\right)+\int_s^T D_x H(r,X_r,\lr(X_r),P_r,Q_r)dr-\int_s^T Q_rdB_r,\quad s\in[t,T].
	   \end{aligned}
	\right.
\end{equation}
Here, the Hamiltonian $H:[0,T]\times\brn\times\pr_2(\brn)\times\brn\times\br^{n\times n}\to\br$ is defined as
\begin{align}
		&H (s,x,m,p,q):=\inf_{v\in\brd} L\left(s,x,m,v,p,q\right), \notag 
  \\
		&L(s,x,m,v,p,q):=p^\top  b(s,x,m,v)+\sum_{j=1}^n \left(q^j\right)^\top \sigma^j(s,x,m,v)+f(s,x,m,v). \notag 
\end{align}
Reversely,  the well-posedness of FBSDEs \eqref{intro_2}  also gives a solution to MFG \eqref{intro_MFG}. Another main concern of this article is to study the following MFTC problem in a more probabilistic favor:
\begin{equation}\label{intro_MFTC}
	\left\{
	\begin{aligned}
		&\inf_{v_\cdot\in \lr^2_{\f}(t,T)} \ J(v_\cdot):=\left[\e \int_{t}^{T}f\left(s,X^v_s,\lr\left(X^v_s\right),v_s\right)ds+g\left(X^v_T,\lr\left(X^v_T\right)\right)\right],\\
		&X^v_s=\xi+\int_t^s b\left(r, X^v_r,\lr\left(X^v_r\right),v_r\right)dr +\int_t^s \sigma\left(r, X^v_r,\lr\left(X^v_r\right),v_r\right)dB_r,\quad s\in[t,T].
	\end{aligned}
	\right.
\end{equation}
From the stochastic maximum principle for McKean--Vlasov type control problems \cite{AB8,CR}, the optimal control of the MFTC problem \eqref{intro_MFTC} is associated with the following FBSDEs:
\begin{equation}\label{FB:4}
	\left\{
	\begin{aligned}
		X_s=\ & \xi +\int_t^s D_p H(r,X_r,\lr(X_r),P_r,Q_r)dr +\int_t^s  D_{q}H(r,X_r,\lr(X_r),P_r,Q_r)dB_r,\\
		P_s=\ & D_xg\left(X_T,\lr(X_T)\right) +\widetilde{\e}\left[D_y\frac{d g}{d\nu}\left(\widetilde{X_T},\lr(X_T)\right)(X_T)\right]\\
		& +\int_s^T\bigg\{D_x H(r,X_r,\lr(X_r),P_r,Q_r)+\widetilde{\e}\left[D_y \frac{d H}{d\nu}\left(r,\widetilde{X_r},\lr(X_r),\widetilde{P_r},\widetilde{Q_r}\right)(X_r) \right]\bigg\}dr\\
		& - \int_s^T Q_rdB_r,\quad s\in[t,T],
	\end{aligned}
	\right.
\end{equation}
where the processes $\widetilde{X_s}$, $\widetilde{P_s}$ and $\widetilde{Q_s}$ are respectively  independent copies of $X_s$, $P_s$ and $Q_s$, and $\frac{dH}{d\nu}$ is the linear functional derivative of $H$ with respect to the measure variable (see Section~\ref{sec:notation} below). The backward equation in FBSDEs \eqref{FB:4} is different from that of FBSDEs \eqref{intro_2} by including the terms involving $D_y\frac{dH}{d\nu}$ and $D_y\frac{dg}{d\nu}$; this is because the state process of the MFTC problem \eqref{intro_MFTC} depends simultaneously on the distribution of the current controlled state, while the state process of MFG \eqref{intro_MFG} depends on the the equilibrium distribution from the population. The main result of this work is to give the well-posedness of FBSDEs \eqref{intro_2} under different kinds of monotonicity conditions on $f$, and also give the well-posedness of FBSDEs \eqref{FB:4} under a convexity assumption on $f$ in $(x,v)\in\brn\times\brd$ and $m\in \pr_2(\brn)$. Our  approach is more aligned with the traditional stochastic control method, which is significantly different from the recently advocated analytical approach to MFGs via master equation or PDE-based HJB-FP equations in the existing literature. The control theoretic perspective allows us to cope with the state and control linear diffusion $\sigma$, which is  unbounded and possibly degenerate. We allow our running cost functional $f$ to be non-separable in $v$ and $m$ with a quadratic growth, and to satisfy a strong convexity or even a more general displacement quasi-monotonicity. Moreover, we  also solve both MFG \eqref{intro_MFG} and the MFTC problem \eqref{intro_MFTC} with generic drift functionals $b$. We also require less regularity of the cost functions than  the existing literature  to solve the original mean field problems.

To study the well-posedness of FBSDEs \eqref{intro_2} and \eqref{FB:4}, we first establish the solvability of a general system of FBSDEs defined on Hilbert space (see FBSDEs \eqref{FB:11}) under our $\beta$-monotonicity (see Condition \ref{Condition_mono}), and then apply this general well-posedness result for FBSDEs \eqref{intro_2} and \eqref{FB:4}, both of which can be viewed as particular cases of FBSDEs \eqref{FB:11}. The monotonicity condition for fully coupled FBSDEs was first introduced by Hu--Peng \cite{YH2} and Peng--Wu \cite{SP} together with the continuation method. This monotonicity condition was then used as a convexity condition in \cite{CR} for FBSDEs arising from MFTC problem, and it was also used as a weak monotonicity condition (displacement monotonicity condition) in \cite{SA1,HZ1} for FBSDEs arising from MFG with a common noise. Compared with the usual monotonicity condition for FBSDEs, our $\beta$-monotonicity allows us to include more general situations by choosing a suitable candidate of $\beta$, for instance, it can be used to establish the well-posedness of the  Jacobian and Hessian flows for FBSDEs \eqref{intro_2} and \eqref{FB:4} by choosing suitable maps of $\beta$; see our previous work \cite{AB10,AB11}. In this work, we further extend our previous results in \cite{AB10,AB11} by relaxing assumptions on $f$, and also including the generic drift functionals. For the FBSDEs \eqref{intro_2} arising from MFG \eqref{intro_MFG}, we propose a new monotonicity condition on $f$ (see Condition~\ref{assumption_mono_1}): the functional $f$ is assumed to be divided into two parts, both dependent on $(s,x,m,v)$, with one part satisfying a strong convexity condition in $(x,v)$, and the other part satisfying the newly proposed displacement quasi-monotonicity. To the best of our knowledge, this condition is brand-new in mean field theory, and it can be considered as a more general monotonicity condition; for example, it can include as interesting special cases the one with separability and displacement monotonicity proposed in \cite{SA1} and also the ``strong convexity and small mean field effect" condition proposed in our previous work \cite{AB11}, and it also overlaps with monotonicity conditions proposed in \cite{GW,GM,Anti_mono} from an analytical viewpoint. We show that, the convexity and the quasi-monotonicity conditions in our Condition~\ref{assumption_mono_1} can imply the corresponding $\beta$-monotonicity for FBSDEs \eqref{intro_2}, and therefore ensure the well-posedness of the FBSDEs. As two particular cases of our Condition~\ref{assumption_mono_1}, the classical displacement monotonicity and the small mean field effect can both be viewed as a condition to ensure the $\beta$-monotonicity. For the FBSDEs \eqref{FB:4} arising from the MFTC problem \eqref{intro_MFTC}, we need the cost functional $f$ to be jointly convex in $(x,v)$ and also convex in $m$; see Assumption (B3) in Section~\ref{sec:MFTC}. We show that the corresponding $\beta$-monotonicity for FBSDEs \eqref{FB:4} is exactly this notion of convexity assumption. Moreover, for both FBSDEs \eqref{intro_2} and \eqref{FB:4}, we also study the case when the drift functional $b$ can be non-linear in $x$, $v$ and $m$; see Assumptions (A1') and (B1') in Section~\ref{sec:generic}. We can prove that, in these cases, the coefficients satisfy the $\beta$-monotonicity; therefore, our well-posedness result for general FBSDEs defined on Hilbert space under $\beta$-monotonicity is feasible. 

MFG \eqref{intro_MFG} is usually associated with a mean field master equation, and the MFTC problem \eqref{intro_MFTC} is usually associated with a Bellman euqation. In \cite{AB10}, we establish the classical solutions of the Bellman equation corresponding to the MFTC problem; and in \cite{AB11}, we also establish the classical solution of the MFG master equation. We can see that, to obtain a classical solution of a master equation or HJB equation requires more restrictive assumptions and higher regularity on coefficients than just to obtain an equilibrium solution of MFG or an optimal control for the MFTC problem. Therefore, we study MFG \eqref{intro_MFG} and the MFTC problem \eqref{intro_MFTC} by a stochastic control method, which can include more cases. To make the control perspective more clear, we prefer to give conditions directly on coefficients $b$, $\sigma$, $f$ and $g$, rather than give conditions on the Hamiltonian functional $H$ or other feedback maps of $m$. We also refer to other discussions on monotonicity conditions for MFG from an analytical perspective; see \cite{GW,GM,Anti_mono}. 

The rest of the paper is organized as follows. In Section~\ref{sec:Hilbert}, we establish the well-posedness of general FBSDEs defined on Hilbert space under the $\beta$-monotonicity. In Section~\ref{sec:MFG}, we give a sufficient condition of maximum principle for MFG \eqref{intro_MFG}, and propose different monotonicity conditions on $f$ to ensure the well-posedness of FBSDEs \eqref{intro_2}. Section~\ref{sec:MFTC} gives a sufficient condition of maximum principle for the MFTC problem \eqref{intro_MFTC}, and we shall show that the convexity of $f$ can ensure the well-posedness of FBSDEs \eqref{FB:4}. In Section~\ref{sec:generic}, we study the case when the drift functional $b$ is non-linear and generic for both MFG \eqref{intro_MFG} and the MFTC problem \eqref{intro_MFTC}.

\subsection{Notations}\label{sec:notation}

For any $X\in L^2(\Omega,\f,\mathbb{P};\brn)$, we denote by $\lr(X)$ its law and by $\|X\|_2$ the $L^2$-norm. For every $t\in[0,T]$, we  denote by $L^2_{\f_t}$ the set of all $\f_t$-measurable square-integrable $\brn$-valued random vectors, and denote by $\lr^2_{\f}(0,T)$  the set of all $\f_t$-progressively-measurable $\brn$-valued processes $\alpha_\cdot=\{\alpha_t,\ 0\le t\le T\}$ such that $\e\left[\int_0^T |\alpha_t|^2dt\right]<+\infty$. We  denote by $\mathcal{S}^2_{\f}(0,T)$ the set of all $\f_t$-progressively-measurable $\brn$-valued processes $\alpha_\cdot=\{\alpha_t,\ 0\le t\le T\}$ such that $\e\left[\sup_{0\le t\le T} |\alpha_t|^2\right]<+\infty$. We denote by $\mathcal{P}_{2}(\brn)$ the space of all probability measures of finite second-order moments on $\brn$, equipped with the 2-Wasserstein metric: $W_2\left(m,m'\right):=\inf_{\pi\in\Pi\left(m,m'\right)}\sqrt{\int_{\brn\times\brn}\left|x-x'\right|^2\pi\left(dx,dx'\right)}$, where $\Pi\left(m,m'\right)$ is the set of joint probability measures with respective marginals $m$ and $m'$. We denote by $\delta_0$ the point mass distribution of the random variable $\xi$ such that $\mathbb{P}(\xi=\mathbf{0})=1$. Also see \cite{AL} for more results on Wasserstein metric space.

The linear functional derivative of a functional $k(\cdot):\mathcal{P}_{2}(\brn)\to\br$ at $m\in \mathcal{P}_{2}(\brn)$ is another functional $\mathcal{P}_{2}(\brn)\times \brn\ni(m,y)\mapsto\dfrac{dk}{d\nu}(m)(y)$,  being jointly continuous and satisfying $\int_{\brn}\Big|\dfrac{dk}{d\nu}(m)(y)\Big|^{2}dm(y)\leq c(m)$ for some positive constant $c(m)$ which is bounded on any bounded subsets of $\pr_2(\brn)$, such that 
\begin{equation*}
    \lim_{\epsilon\to0}\dfrac{k((1-\epsilon)m+\epsilon m')-k(m)}{\epsilon}=\int_\brn\dfrac{dk}{d\nu}(m)(y)\left(dm'(y)-dm(y)\right), \quad \forall m'\in\mathcal{P}_{2}(\brn);
\end{equation*}
we refer the reader to \cite{AB,book_mfg} for more details about the notion of linear functional derivatives. In particular, the linear functional derivatives in $\pr_2(\brn)$ are connected to the G\^ateaux derivatives in $L^2(\Omega,\f,\mathbb{P};\brn)$ in the following way. For a linearly functional differentiable functional $k:\pr_2(\brn)\to\br$ such that the derivative $D_y\frac{d k}{d\nu}(\mu)(y)$ is jointly continuous in $(\mu,y)$ and $D_y\frac{d k}{d\nu}(\mu)(y)\le c(\mu)(1+|y|)$ for $(\mu,y)\in\pr_2(\brn)\times\brn$,  the functional $K(X):=k(\lr(X)), \  X\in L^2(\Omega,\f,\mathbb{P};\brn)$ has the following G\^ateaux derivative:
\begin{align}\label{lem01_1}
	D_X K(X)(\omega)=D_y\frac{d k}{d\nu}(\lr(X))(X(\omega)). 
\end{align}
Furthermore, if $k$ is twice linearly functional differentiable, then the functional $K$ is also twice G\^ateaux differentiable, and the G\^ateaux derivative at $X$ along a direction $Z\in L^2(\Omega,\f,\mathbb{P};\brn)$ is
\begin{align*}
    D_X^2 K(X)\left(Z\right)=\left(D_y^2\frac{d k}{d\nu}(\lr(X))(X)\right)^\top Z+\widetilde{\e}\left[ \left(D_{y'}D_y\frac{d^2 k}{d\nu}(\lr(X))\left(X,\tx\right)\right)^\top \tz\right]. 
\end{align*}
Here and in the following of the paper, for any random variable $\xi$, we write $\widetilde{\xi}$ for its independent copy, and $\widetilde{\e}[\widetilde{\xi}]$ for the corresponding expectation.  

For convenience, in this article, we write $f|_{a}^b:=f(b)-f(a)$ for the difference of a  functional $f$ between two points $b$ and $a$. 

\section{Monotonicity Condition for FBSDEs on Hilbert space}\label{sec:Hilbert}

We here give the monotonicity condition to ensure the well-posedness of general FBSDEs defined on Hilbert space, which will be used to give the respective monotonicity conditions so as to ensure the well-posedness of FBSDEs \eqref{intro_2} and \eqref{FB:4} in the following sections. We consider the following FBSDEs defined on Hilbert space of $L^2(\Omega,\f,\mathbb{P};\brn)$: for an initial $(t,\xi)\in[0,T]\times  L_{\f_t}^2$,
\begin{equation}\label{FB:11}
	\left\{
	\begin{aligned}
		&X_s=\xi+\int_t^s \mathbf{B} (r,X_r,P_r,Q_r)dr+\int_t^s \mathbf{A}(r,X_r,P_r,Q_r)dB_r,\\
		&P_s=\mathbf{G}(X_T)-\int_s^T\mathbf{F}(r,X_r,P_r,Q_r)dr-\int_s^T Q_rdB_r,\quad s\in[t,T],
	\end{aligned}
	\right.
\end{equation}
where $\mathbf{B},\mathbf{F}:[0,T]\times (L^2\times L^2\times(L^2)^n)(\Omega,\f,\mathbb{P};\brn)\to L^2(\Omega,\f,\mathbb{P};\brn)$, $\mathbf{A}: [0,T]\times (L^2\times L^2\times(L^2)^n)(\Omega,\f,\mathbb{P};\brn)\to L^2\left(\Omega,\f,\mathbb{P};\br^{n\times n}\right)$ and $\mathbf{G}:L^2(\Omega,\f,\mathbb{P};\brn)\to L^2(\Omega,\f,\mathbb{P};\brn)$. The system of FBSDEs \eqref{FB:11} can be viewed as a lifted version of FBSDEs \eqref{intro_2} and \eqref{FB:4} via \eqref{def:BAFG} and \eqref{def:BAFG'} respectively, under assumptions (A)'s and (B)'s to be stated in the following sections. The well-posedness of the generic FBSDEs \eqref{FB:11} requires  the following condition: 

\begin{condition}\label{Condition_mono}
There exists a map $\beta: [0,T]\times \left(L^2\times L^2\times(L^2)^n\right) (\Omega,\f,\mathbb{P};\brn)\ni(s,X,P,Q)\mapsto\beta(s,X,P,Q)\in L^2(\Omega,\f,\mathbb{P};\brd)$ and constants $\Lambda_\beta>0$, $\Gamma_\beta\geq 0$ and $K_\beta>0$, such that for any $X,X',P,P'\in L^2(\Omega,\f,\mathbb{P};\brn)$ and $Q,Q'\in L^2\left(\Omega,\f,\mathbb{P};\br^{n\times n}\right)$,
\begin{enumerate}[(i)]
    \item {\bf ($\beta$-Monotonicity)}
    \begin{enumerate}[(a)]
        \item The maps $\mathbf{B}$, $\mathbf{A}$ and $\mathbf{F}$ satisfy
	    \begin{align}
		    &\e\bigg[\left(\mathbf{F}(s,X',P',Q')-\mathbf{F}(s,X,P,Q)\right)^\top  (X'-X)\notag\\
            &\quad +\left(\mathbf{B}(s,X',P',Q')-\mathbf{B}(s,X,P,Q)\right)^\top  (P'-P) \notag \\[1mm]
		    &\quad +\sum_{j=1}^n \left(\mathbf{A}^j(s,X',P',Q')-\mathbf{A}^j(s,X,P,Q)\right)^\top  \left({Q'}^{j}-Q^j\right)\bigg] \notag \\
		    \le\ & -\Lambda_\beta \e\left[  \left|\beta(s,X',P',Q')-\beta(s,X,P,Q)\right|^2\right]\notag\\
		    & +\Gamma_\beta\left(\|X'-X\|^2_2+\|P'-P\|^2_2+\|Q'-Q\|_2^2 \right); \label{monotonicity} 
	    \end{align}
        \item The map $\mathbf{G}$ satisfies
        \begin{align*}
    	    &\e\left[\left(\mathbf{G}(X')-\mathbf{G}(X)\right)^\top  (X'-X)\right]\geq 0.
        \end{align*} 
    \end{enumerate}
    \item {\bf ($\beta$-Lipschitz)} The following continuity conditions hold
	\begin{align}
		&\|\mathbf{B}(s,X',P',Q')-\mathbf{B}(s,X,P,Q)\|^2_2+ \|\mathbf{A}(s,X',P',Q')-\mathbf{A}(s,X,P,Q)\|^2_2  \notag\\
		\le\ &   K_\beta \left(\|X'-X\|^2_2+\e\left[| \beta(s,X',P',Q')-\beta(s,X,P,Q)|^2\right]\right);\label{condi_Lip_1}\\[3mm]
		&\|\mathbf{F}(s,X',P',Q')-\mathbf{F}(s,X,P,Q)\|^2+\|\mathbf{G}(X')-\mathbf{G}(X)\|^2_2\notag\\
		\le\ &   K_\beta \left(\|X'-X\|^2_2+\|P'-P\|^2_2+\|Q'-Q\|_2^2 +\e \left[| \beta(X',s;P',Q')-\beta(X,s;P,Q)|^2 \right]\right). \notag
	\end{align}
\end{enumerate}
\end{condition}

We now state the well-posedness of FBSDEs \eqref{FB:11} under Condition \ref{Condition_mono}. 

\begin{lemma}\label{lem:1}
    Under Condition \ref{Condition_mono}, when 
    \begin{align}\label{big_Lam}
        \Lambda_\beta> 4T(T+1)K_\beta^2 \exp\left(2T(T+1)K_\beta^2 \right)\left[1+T(T+1)K_\beta^2 \exp\left(2T(T+1)K_\beta^2 \right)\right]\Gamma_\beta,
    \end{align}
    there is a unique adapted solution $(X,P,Q)\in\sr^2_\f(t,T)\times\sr^2_{\f}(t,T)\times\left(\lr^2_{\f}(t,T)\right)^n$ of the FBSDEs \eqref{FB:11}. Furthermore, if the parameter $\Gamma_\beta=0$ in Condition \ref{Condition_mono} (i)(a), then for any $\Lambda_\beta> 0$, the same assertion is still true.    
\end{lemma}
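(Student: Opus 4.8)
The plan is to prove solvability of \eqref{FB:11} by the method of continuation in the coefficient, in the spirit of the Hu--Peng and Peng--Wu approach referenced in the introduction, since the full coupling between the forward and backward equations precludes a direct contraction on an arbitrary horizon $[t,T]$. For $\alpha\in[0,1]$ I would introduce the interpolated family of FBSDEs with forward coefficients $\alpha\mathbf{B}$ and $\alpha\mathbf{A}$, backward driver $\alpha\mathbf{F}-(1-\alpha)X$ and terminal value $\alpha\mathbf{G}(X_T)$, augmented by prescribed inhomogeneous input terms added to the two drifts, the diffusion and the terminal datum. At $\alpha=0$ the system decouples into the explicitly solvable forward equation $X_s=\xi+\int_t^s I^{\mathbf{B}}_r\,dr+\int_t^s I^{\mathbf{A}}_r\,dB_r$ and a linear BSDE in $(P,Q)$, so it is uniquely solvable for every input; at $\alpha=1$ with vanishing inputs it is exactly \eqref{FB:11}. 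The goal is to produce a step size $\delta_0>0$, \emph{independent of} $\alpha_0$, such that unique solvability for all inputs at $\alpha_0$ forces the same at $\alpha_0+\delta$ for every $\delta\in[0,\delta_0]$; finitely many such steps then carry $\alpha=0$ to $\alpha=1$.

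The core ingredient is an a priori stability estimate. Given two input data and the associated solutions, I write $\Delta X,\Delta P,\Delta Q,\Delta\beta$ for the differences and apply It\^o's formula to $s\mapsto\e[\Delta X_s^\top\Delta P_s]$ over $[t,T]$. Since $\Delta X_t=0$, the left side collapses to the terminal pairing $\e[\Delta X_T^\top\Delta P_T]=\alpha_0\,\e[(\mathbf{G}(X_T')-\mathbf{G}(X_T))^\top\Delta X_T]+(\text{terminal input})$, which is nonnegative up to inputs by Condition~\ref{Condition_mono}(i)(b); using that the backward drift carries $\mathbf{F}$ with a plus sign, the integral on the right reproduces exactly the $\beta$-monotonicity pairing of \eqref{monotonicity}, together with a favorable term $(1-\alpha_0)\e\int_t^T|\Delta X|^2\,dr\ge0$ and input cross terms. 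Feeding \eqref{monotonicity} in and discarding the favorable term yields the key inequality $\alpha_0\Lambda_\beta\int_t^T\e|\Delta\beta|^2\,dr\le\alpha_0\Gamma_\beta\int_t^T(\|\Delta X\|_2^2+\|\Delta P\|_2^2+\|\Delta Q\|_2^2)\,dr+(\text{inputs})$.

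It then remains to bound $\|\Delta X\|_2$, $\|\Delta P\|_2$, $\|\Delta Q\|_2$ back in terms of $\int_t^T\e|\Delta\beta|^2\,dr$ and the inputs. The forward estimate invokes the $\beta$-Lipschitz bound \eqref{condi_Lip_1} and Gr\"onwall to control $\sup_s\|\Delta X_s\|_2^2$ with the amplification factor $\exp(2T(T+1)K_\beta^2)$, while the backward estimate uses the second $\beta$-Lipschitz bound and standard BSDE a priori estimates to control $\|\Delta P\|_2^2+\|\Delta Q\|_2^2$ by the forward output and $\int_t^T\e|\Delta\beta|^2\,dr$. Combining the three, the net coefficient of $\int_t^T\e|\Delta\beta|^2\,dr$ is $-\Lambda_\beta$ plus $\Gamma_\beta$ times precisely the amplification constant appearing in \eqref{big_Lam}; hence \eqref{big_Lam} renders this coefficient strictly negative. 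This simultaneously forces $\Delta\beta\equiv0$ and then $\Delta X,\Delta P,\Delta Q\equiv0$ when the inputs agree (uniqueness), and leaves a uniform positive gap. I would then run the continuation step as a contraction: at level $\alpha_0+\delta$, freeze $(X,P,Q)$, push the $\delta$-increment into the inputs, solve the $\alpha_0$-system (available by hypothesis), and show via the stability estimate that the solution map contracts with constant of order $\delta$ divided by the positive gap, so one $\delta_0$ works for all $\alpha_0$; Banach's theorem closes each step. When $\Gamma_\beta=0$ the anti-monotone term is absent, the coefficient of $\int_t^T\e|\Delta\beta|^2\,dr$ is already $-\Lambda_\beta<0$ for \emph{any} $\Lambda_\beta>0$, and the same scheme proceeds without \eqref{big_Lam}.

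I expect the principal difficulty to lie in the a priori estimate under the \emph{degenerate} character of the $\beta$-monotonicity: \eqref{monotonicity} controls only the single functional $\beta$, not the triple $(\Delta X,\Delta P,\Delta Q)$, so monotonicity alone cannot close the loop. The delicate point is to route the gain $\|\Delta\beta\|_2$ through the forward equation to recover $\|\Delta X\|_2$ and through the backward equation to recover $\|\Delta P\|_2$ and $\|\Delta Q\|_2$, using nothing beyond the $\beta$-Lipschitz structure of Condition~\ref{Condition_mono}(ii), while tracking the Gr\"onwall and BSDE constants sharply enough that the resulting threshold coincides with \eqref{big_Lam}; any looser bookkeeping would yield a strictly worse constant than the one stated.
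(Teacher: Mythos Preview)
Your proposal is correct and follows exactly the method of continuation in coefficients that the paper invokes (citing Hu--Peng, Peng--Wu, Ahuja--Ren--Yang, and the authors' own earlier works) without spelling out the details; the interpolation in $\alpha$, the It\^o pairing $\e[\Delta X_s^\top\Delta P_s]$ combined with the $\beta$-monotonicity \eqref{monotonicity} and Condition~\ref{Condition_mono}(i)(b), and the closure via the $\beta$-Lipschitz bounds and Gr\"onwall are precisely the ingredients required, and your identification of the delicate point---that monotonicity controls only $\Delta\beta$ and one must route back through the forward and backward estimates to recover $(\Delta X,\Delta P,\Delta Q)$---is exactly what distinguishes this $\beta$-version from the classical Peng--Wu setting. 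The paper omits the proof as standard, so your write-up would in fact supply the missing details.
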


\begin{remark}
    The exponential term in \eqref{big_Lam} comes from the use of the Gr\"onwall's inequality. Actually, as a particular case, when the right hand side of \eqref{monotonicity} is simply:
    \begin{align}\label{monotonicity'}
         -\Lambda_\beta \e\left[\left|\beta(s,X',P',Q')-\beta(s,X,P,Q)\right|^2\right]+\Gamma_\beta \|X'-X\|^2_2,
    \end{align}
    and the right hand side of \eqref{condi_Lip_1} is just written as
    \begin{align}\label{condi_Lip_1'}
        K_\beta \ \e \left[| \beta(s,X',P',Q')-\beta(s,X,P,Q)|^2\right],
    \end{align}
    then the condition~\eqref{big_Lam} can much reduce to $\Lambda_{\beta} > 2T(T+1)K_\beta^2 \Gamma_\beta$; for instance, for the FBSDEs \eqref{intro_2} for the MFG \eqref{intro_MFG} in connection with the setting \eqref{def:BAFG}, if both the coefficient functions $b$ and $\sigma$ do not functionally depend on $x$ and $m$, then the the right hand side of \eqref{monotonicity} becomes  \eqref{monotonicity'} and the right hand side of \eqref{condi_Lip_1} reduces to \eqref{condi_Lip_1'}.
\end{remark}

The proof of the preceding well-posedness result appeals to  the method of continuation in coefficients proposed in \cite{YH2}, and is similar to that of \cite[Theorem 2.3]{SP}, \cite[Theorem 1]{SA1}, \cite[Lemma 4.1]{AB10} and \cite[Lemma 2.2]{AB11}, and is thus omitted here. Condition \ref{Condition_mono}(i) is actually the `usual monotonicity condition' for FBSDEs, which can be dated back to Hu-Peng \cite{YH2} and Peng-Wu \cite{SP} for fully coupled FBSDEs in Euclidean spaces. It is also referred as the weak monotonicity condition (displacement monotonicity condition) in Ahuja et al. \cite{SA1} for FBSDEs in Hilbert spaces, which can be applied to the FBSDEs arising from MFGs with a common noise. Here, our $\beta$-monotonicity in Condition \ref{Condition_mono} allows us to extend our study to more general situations. It is also used in our previous works \cite{AB10,AB11} to establish the well-posedness of the respective Jacobian and Hessian flows for FBSDEs \eqref{intro_2} and \eqref{FB:4} by choosing suitable maps $\beta$. In this paper, we further extend our previous works \cite{AB10,AB11} by relaxing assumptions on the running cost functional $f$ (see Section~\ref{sec:MFG}), and we can also include the generic drift functional cases (see Section~\ref{sec:generic}). Lemma~\ref{lem:1} with our $\beta$-monotonicity turns out to apply to all these cases.

\section{Monotonicity Conditions for MFG}\label{subsec:mono_mfg}\label{sec:MFG}

We now study the solvability of MFG \eqref{intro_MFG}. We first give a sufficient condition of maximum principle for MFG \eqref{intro_MFG}, and then introduce different monotonicity conditions for FBSDEs \eqref{intro_2}, and will give in the last section with the help of Lemma~\ref{lem:1} the well-posedness result under these monotonicity conditions. 

\subsection{Maximum principle}

We need the following assumptions on coefficients.

\textbf{(A1)} The functions $b$ and $\sigma$ are linear in $(x,v)$. That is,
\begin{align*}
	&b(s,x,m,v)=b_0(s,m)+b_1(s)x+b_2(s)v,\quad \sigma(s,x,m,v)=\sigma_0(s,m)+\sigma_1(s)x+\sigma_2(s)v.
\end{align*}
Here, the functions $b_0$ and $\sigma_0$ are $L$-Lipschitz continuous in $m\in\pr_2(\brn)$, and the norms of matrices $b_1(s),\sigma_1(s),b_2(s),\sigma_2(s)$ are bounded  by $L$. 

\textbf{(A2)} The cost functions $f$ and $g$ have a  quadratic growth, and  satisfy for $(s,x,m,v)\in [0,T]\times\brn\times \pr_2(\brn)\times \brd$,
\begin{align}
	|f(s,x,m,v)|\le L \left(1+|x|^2+W_2^2(m,\delta_0)+|v|^2\right),\quad |g(x,m)|\le L \left(1+|x|^2+W_2^2(m,\delta_0)\right). \label{quadratic_growth}
\end{align}
The derivatives $D_x f, \ D_v f, \ D_x g$ exist, and they are continuous in all their arguments, such that
\begin{align*}
    \left|(D_xf,D_vf) (s,x',m',v')-(D_xf,D_vf) (s,x,m,v)\right|\le\ & L\left(|x'-x|+|v'-v|+ W_2(m,m')\right),\\
    \left|D_x g(x',m')-D_x g(x,m)\right|\le\ & L\left(|x'-x|+ W_2(m,m')\right).
\end{align*}

\textbf{(A3)} (i) The terminal cost function $g$ is convex in $x$, and there exists $\lambda> 0$ such that for any $s\in[0,T]$ and  $(x,m,v,v')\in\brn\times\pr_2(\brn)\times\brd\times\brd$,
\begin{align*}
	&f(s,x,m,v')-f(s,x,m,v)\geq \left(D_v f (s,x,m,v)\right)^\top  (v'-v)+\lambda |v'-v|^2;
\end{align*}
(ii) moreover, the running cost function $f$ is jointly convex in $x$ and $v$, that is,
\begin{align*}
	f\left(s,x',v',m\right)-f(s,x,v,m)\geq \left[\begin{pmatrix}  D_x f\\ D_v f\end {pmatrix}(s,x,v,m)\right]^\top  \begin{pmatrix}  x'-x\\ v'-v\end {pmatrix}+\lambda \left|v'-v\right|^2.
\end{align*}

Under above assumptions, we define the optimal control $\hv$ as a map $[0,T]\times \brn\times\pr_2(\brn)\times \brn\times\br^{n\times n}\ni(s,x,m,p,q)\mapsto \widehat{v}(s,x,m,p,q)\in\brd$ so that
\begin{align}\label{hv}
	D_v L \left(s,x,m,\widehat{v}(s,x,m,p,q),p,q\right)=0.
\end{align}
We now give the sufficiency for the maximum principle for MFG \eqref{intro_MFG} under above assumptions.

\begin{theorem}\label{lem:MP1}
	Under Assumptions (A1), (A2) and (A3)(i), suppose that FBSDEs \eqref{intro_2} have a solution $(X,P,Q)\in \sr^2_\f(t,T)\times\sr^2_{\f}(t,T)\times\left(\lr^2_{\f}(t,T)\right)^n$. Then, there is a constant $c(L,T)>0$, such that for $\lambda>c(L,T)$, MFG \eqref{intro_MFG} has a unique solution
	\begin{align}\label{MFG_solution}
		v_s:=\hv\left(s,X_s,\lr(X_s),P_s,Q_s\right),\quad s\in[t,T].
	\end{align}
    Furthermore, if (A3)(ii) is satisfied, then, for any $\lambda> 0$, the same assertion still holds.
\end{theorem}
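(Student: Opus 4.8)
The plan is to verify, by a direct convexity/verification argument, that the candidate $v_s=\hv(s,X_s,\lr(X_s),P_s,Q_s)$ is the unique minimizer of \eqref{intro_MFG}, crucially exploiting that in the MFG the equilibrium flow $\hm_s=\lr(X_s)$ is \emph{frozen} throughout the minimization over $v_\cdot$, so that the problem reduces to a standard stochastic control problem with measure arguments held fixed. First I would record the envelope identities implied by \eqref{hv}: since $\hv$ minimizes $L$ in $v$, one has $D_pH=b(s,\cdot,\hv)$, $D_qH=\sigma(s,\cdot,\hv)$ and $D_xH=D_xL(s,\cdot,\hv,\cdot)$ at the relevant arguments, so the forward equation in \eqref{intro_2} is exactly the state equation for $X=X^{v}$ driven by the frozen flow, with $\lr(X_s)=\hm_s$. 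Admissibility of $v_\cdot$ in $\lr^2_{\f}(t,T)$ follows from the linear growth of $\hv$ (a consequence of the first-order condition together with $\lambda>0$) and from $(X,P,Q)\in\sr^2_\f\times\sr^2_\f\times(\lr^2_\f)^n$.

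The core computation is the comparison of $J(v'_\cdot)-J(v_\cdot)$ for an arbitrary admissible $v'_\cdot$ with associated state $X'=X^{v'}$ (driven by the \emph{same} frozen flow $\hm$). Writing $\delta X_s:=X'_s-X_s$ and $\delta v_s:=v'_s-v_s$, I would apply It\^o's formula to $P_s^\top\delta X_s$ on $[t,T]$, using the backward dynamics $dP_s=-D_xH\,ds+Q_s\,dB_s$ from \eqref{intro_2}, the terminal identity $P_T=D_xg(X_T,\lr(X_T))$, and $\delta X_t=0$. Combining this with the convexity of $g$ in $x$, which gives $g(X'_T,\hm_T)-g(X_T,\hm_T)\ge P_T^\top\delta X_T$, and rearranging through the definition of $L$, the drift/diffusion and first-order running-cost terms organize into
\[
J(v'_\cdot)-J(v_\cdot)\ \ge\ \e\int_t^T\Big[\big(L'-L\big)-(D_xH)^\top\delta X_s\Big]\,ds,
\]
where $L':=L(s,X'_s,\hm_s,v'_s,P_s,Q_s)$ and $L:=L(s,X_s,\hm_s,v_s,P_s,Q_s)$. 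Using the linearity of $b,\sigma$ in $(x,v)$ (so that the $P^\top b+Q^\top\sigma$ part is affine, its $x$-slope matches the corresponding part of $D_xH$, and its $v$-slope equals $-D_vf$ at the base point by \eqref{hv}), the integrand collapses to the Bregman-type quantity
\[
f(s,X'_s,\hm_s,v'_s)-f(s,X_s,\hm_s,v_s)-(D_xf)^\top\delta X_s-(D_vf)^\top\delta v_s,
\]
with $D_xf,D_vf$ evaluated at $(s,X_s,\hm_s,v_s)$.

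Under (A3)(ii), joint convexity of $f$ in $(x,v)$ makes this integrand $\ge\lambda|\delta v_s|^2\ge0$, hence $J(v'_\cdot)-J(v_\cdot)\ge\lambda\,\e\int_t^T|\delta v_s|^2\,ds$, strictly positive unless $v'=v$; this yields optimality and uniqueness for every $\lambda>0$. Under only (A3)(i), $f$ is convex in $v$ but not in $x$, so I would split the $f$-increment into a pure $v$-move at $x=X'_s$ and a pure $x$-move at $v=v_s$: convexity in $v$ contributes $\ge\lambda|\delta v_s|^2$, while the $x$-increment and the mismatch $D_vf(X'_s,v_s)-D_vf(X_s,v_s)$ are controlled by the Lipschitz bounds on $D_xf,D_vf$ from (A2), producing the lower bound $\lambda|\delta v_s|^2-L|\delta X_s||\delta v_s|-\tfrac{L}{2}|\delta X_s|^2$.

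The main obstacle, and the source of the threshold $c(L,T)$, is absorbing these indefinite $\delta X$ terms. For this I would invoke the standard a priori estimate for the linear variational SDE $d(\delta X_s)=(b_1\delta X_s+b_2\delta v_s)\,ds+(\sigma_1\delta X_s+\sigma_2\delta v_s)\,dB_s$ with $\delta X_t=0$: Gr\"onwall's inequality gives $\e\int_t^T|\delta X_s|^2\,ds\le K(L,T)\,\e\int_t^T|\delta v_s|^2\,ds$. A Cauchy--Schwarz step on the cross term then yields
\[
J(v'_\cdot)-J(v_\cdot)\ \ge\ \big(\lambda-c(L,T)\big)\,\e\int_t^T|\delta v_s|^2\,ds,\qquad c(L,T):=L\,K(L,T)^{1/2}+\tfrac{L}{2}K(L,T),
\]
so that $\lambda>c(L,T)$ forces strict positivity unless $v'=v$, establishing both optimality and uniqueness. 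I expect the only delicate points to be the bookkeeping of transposes in the It\^o step and pinning down the explicit Gr\"onwall constant $K(L,T)$; the remaining estimates are routine.
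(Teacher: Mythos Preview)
Your proposal is correct and follows essentially the same approach as the paper. The paper omits the proof (referring to \cite[Lemma 2.1]{AB11}), but your verification argument---It\^o's formula applied to $P_s^\top\delta X_s$, convexity of $g$ in $x$, reduction to the Bregman remainder for $f$, and under (A3)(i) alone the Gr\"onwall-based SDE estimate on $\delta X$ to absorb the indefinite terms---is exactly the scheme the paper employs in the detailed proof of the analogous Theorem~\ref{lem:MP1'} (the generic-drift case), and it produces a threshold $c(L,T)$ of the same exponential-in-$T$ form the paper quotes.
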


In Theorem~\ref{lem:MP1}, $c(L,T)=4L^2T(T+1) \exp (4L^2T(T+1))$, again the exponential term comes from the use of the Gr\"onwall's inequality. As a particular case, when both $b$ and $\sigma$ do not functionally depend on $x$, then one can take $c(L,T)=2L^2T(T+1)$. The proof of Theorem~\ref{lem:MP1} is similar to that of \cite[Lemma 2.1]{AB11}, and it is omitted here.

\subsection{Well-posedness of FBSDEs \eqref{intro_2}}

We shall use Lemma~\ref{lem:1} to give the well-posedness of our FBSDEs \eqref{intro_2}, which can be viewed as a subcase of FBSDEs \eqref{FB:11} by setting
\begin{equation}\label{def:BAFG}
	\begin{split}
		&\mathbf{B}(s,X,P,Q)(\omega):=D_p H(s,X(\omega),\lr(X),P(\omega),Q(\omega)),\\
		&\mathbf{A}(s,X,P,Q)(\omega):=D_q H(s,X(\omega),\lr(X),P(\omega),Q(\omega)),\\
		&\mathbf{F}(s,X,P,Q)(\omega):=-D_x H(s,X(\omega),\lr(X),P(\omega),Q(\omega)),\\
		&\mathbf{G}(X)(\omega):=D_x g(X(\omega),\lr(X)),\quad X,P\in L^2\left(\Omega,\f,\mathbb{P};\brn\right),\quad Q\in L^2\left(\Omega,\f,\mathbb{P};\br^{n\times n}\right).
	\end{split}
\end{equation}
From Assumptions (A1) and (A2), it is easy to check that the $\beta$-Lipschitz-continuities specified in Condition Condition~\ref{Condition_mono} (ii) are satisfied with the constant $ K_\beta=C(L)$ and the following choice of $\beta$:
\begin{align}\label{choice_of_beta}
    \beta(s,X,P,Q)(\omega):=\hv(s,X(\omega),\lr(X),P(\omega),Q(\omega)),
\end{align}
where the optimal control $\hv$ is defined in \eqref{hv}. In view of the map $G$ defined in \eqref{def:BAFG}, Condition~\ref{Condition_mono} (i) motivates us to introduce the following displacement quasi-monotonicity condition.
 
\begin{definition}[\bf Displacement quasi-monotonicity condition]\label{def:displacement_quasi}
    For a functional $g:\brn\times\pr_2(\brn)\to\br$ satisfying Assumption (A2), we say that $g$ satisfies the displacement quasi-monotonicity condition with a parameter $\lambda\in\br$, if 
    for any square-integrable random variables $\xi$ and $\xi'$ on the same probability space,
    \begin{align}
        \e\left[\left(D_x g \left(\xi',\lr(\xi')\right)-D_x g (\xi,\lr(\xi))\right)^\top  \left(\xi'-\xi\right)\right]\geq -\lambda \left\|\xi'-\xi\right\|_2^2. \label{g_mono-quasi}
    \end{align}
\end{definition}

\begin{remark}\label{def:displacement}
    Here, $\lambda$ can be an arbitrary real number. As one particular representative example of the Definition~\ref{def:displacement_quasi}, when $\lambda=0$ in \eqref{g_mono-quasi}, it is reduced to the prevalent displacement monotonicity condition:
    \begin{align}
        \e\left[\left(D_x g \left(\xi',\lr(\xi')\right)-D_x g (\xi,\lr(\xi))\right)^\top  \left(\xi'-\xi\right)\right]\geq 0. \label{g_mono}
    \end{align}
    This displacement monotonicity condition was first introduced in \cite{SA} under a different name ``weak monotonicity condition", and then it was also used in \cite{SA1,HZ1} for a  probabilistic approach to the solvability of MFG with a common noise. It is also used in \cite{MR4509653,GW} for an analytical method for the well-posedness of MFG master equations. 
\end{remark}

In view of the map $G$ defined in \eqref{def:BAFG}, we know that when $g$ satisfies the displacement monotonicity condition \eqref{g_mono}, we have
\begin{align}
	&\e\left[\left(\mathbf{G}(X')-\mathbf{G}(X)\right)^\top  (X'-X)\right] \notag \\
	=\ & \e\left[\left(D_xg\left(X',\lr(X')\right)-D_xg\left(X,\lr(X)\right)\right)^\top   \left(X'-X\right)\right] \geq 0, \label{use_latter}
\end{align} 
and therefore, Condition \ref{Condition_mono} (i)(b) is satisfied. In the rest of this section, we focus on conditions on $f$ to ensure the $\beta$-monotonicity in Condition~\ref{Condition_mono} (i)(a) with the choice of $\beta$ in \eqref{choice_of_beta}. Then, the well-posedness of FBSDEs \eqref{intro_2} can be deduced by Lemma~\ref{lem:1}. The following result shows that the convexity of $f$ in $v$ in Assumption (A3)(i) can guarantee Condition~\ref{Condition_mono} (i)(a), and then gives the local solvability of FBSDEs \eqref{intro_2}. 

\begin{theorem}\label{local_solva_mfg}
    Under Assumptions (A1), (A2) and (A3)(i), coefficients in \eqref{def:BAFG} satisfy Condition \ref{Condition_mono} with the choice of $\beta$ in \eqref{choice_of_beta}. As a consequence, there is a constant $c(L,T)>0$, such that for $\lambda>c(L,T)$, there is a unique adapted solution of the FBSDEs \eqref{intro_2}. 
\end{theorem}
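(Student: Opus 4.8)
The plan is to check that the lifted coefficients in \eqref{def:BAFG} verify Condition~\ref{Condition_mono} with the choice of $\beta$ in \eqref{choice_of_beta}, and then to read off well-posedness from Lemma~\ref{lem:1}. The $\beta$-Lipschitz part~(ii) has already been noted to hold with $K_\beta=C(L)$ under (A1)--(A2), and part~(i)(b) is immediate from \eqref{use_latter} once $g$ is (displacement) monotone; hence the whole task reduces to producing the $\beta$-monotonicity inequality \eqref{monotonicity} for $\mathbf B,\mathbf A,\mathbf F$, with $\Lambda_\beta$ large and $\Gamma_\beta$ controlled.

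First I would differentiate the Hamiltonian via the envelope theorem. By (A1) the maps $b,\sigma$ are affine in $(x,v)$ and by (A3)(i) the Lagrangian $L$ is $\lambda$-strongly convex in $v$, so the infimum defining $H$ is attained at the unique minimizer $\hv(s,x,m,p,q)$ fixed by the first-order condition \eqref{hv}, namely $b_2(s)^\top p+\sum_{j=1}^n \sigma_2^j(s)^\top q^j+D_v f(s,x,m,\hv)=0$. Optimality then gives $D_pH=b(s,x,m,\hv)$, $D_{q^j}H=\sigma^j(s,x,m,\hv)$ and $D_xH=b_1(s)^\top p+\sum_{j=1}^n\sigma_1^j(s)^\top q^j+D_xf(s,x,m,\hv)$.

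Next I would substitute these into the left-hand side of \eqref{monotonicity}, abbreviating $\hv':=\beta(s,X',P',Q')$ and $\hv:=\beta(s,X,P,Q)$. The decisive algebraic point is that the affine state-blocks $b_1,\sigma_1$ cancel pairwise: their contribution to the $\mathbf F$-against-$(X'-X)$ term exactly offsets that of the $\mathbf B$-against-$(P'-P)$ and $\mathbf A$-against-$(Q'-Q)$ terms. What remains is a mean-field drift part together with the control-blocks $\e[(\hv'-\hv)^\top(b_2^\top(P'-P)+\sum_{j=1}^n(\sigma_2^j)^\top({Q'}^j-Q^j))]$; applying the first-order condition \eqref{hv} at both points to rewrite $b_2^\top P+\sum_j(\sigma_2^j)^\top Q^j=-D_vf(s,X,\lr(X),\hv)$ recasts these as a $D_vf$-increment. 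The left-hand side of \eqref{monotonicity} thereby collapses to
\[
-\e\!\left[(D_xf'-D_xf)^\top(X'-X)+(D_vf'-D_vf)^\top(\hv'-\hv)\right]+\e\!\left[(b_0'-b_0)^\top(P'-P)\right]+\sum_{j=1}^n\e\!\left[((\sigma_0^j)'-\sigma_0^j)^\top({Q'}^j-Q^j)\right],
\]
where primes on $f$, $b_0$, $\sigma_0^j$ denote evaluation at $(s,X',\lr(X'),\hv')$ resp.\ $\lr(X')$.

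Finally I would bound the joint $(D_xf,D_vf)$-increment from below. Freezing $(x,m)$ at $(X',\lr(X'))$ and peeling off the pure $v$-increment, the strong convexity (A3)(i) contributes $\ge 2\lambda\,\e|\hv'-\hv|^2$; the leftover increments in $(x,m)$ are dominated by the Lipschitz bound in (A2), the estimate $W_2(\lr(X),\lr(X'))\le\|X'-X\|_2$, and Young's inequality, yielding an absorbable $\varepsilon\,\e|\hv'-\hv|^2$ plus a $C(L)$-multiple of $\|X'-X\|_2^2$. The mean-field drift terms are treated the same way through the $L$-Lipschitz continuity of $b_0,\sigma_0$, contributing only to $\Gamma_\beta$. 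Choosing $\varepsilon$ small delivers \eqref{monotonicity} with $\Lambda_\beta$ of order $\lambda$ and $\Gamma_\beta=C(L)$ independent of $\lambda$, so Lemma~\ref{lem:1} applies once $\lambda$ is large enough, the quantitative threshold \eqref{big_Lam} then reading $\lambda>c(L,T)$. The hard part is exactly this envelope-theorem computation: recognizing the cancellation of the $b_1,\sigma_1$ blocks and seeing that \eqref{hv} is precisely what turns the $b_2,\sigma_2$ blocks into a $D_vf$-increment, which is what allows mere $v$-convexity (rather than joint $(x,v)$-convexity) to secure (i)(a); everything afterward is routine Young--Lipschitz bookkeeping, whose only delicate point is keeping the coefficient of $\e|\hv'-\hv|^2$ strictly positive so it can serve as $\Lambda_\beta$.
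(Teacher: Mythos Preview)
Your proposal is essentially the same route the paper takes (its proof simply cites \cite[Lemma~2.3]{AB11} and records the constants $\Lambda_\beta=\lambda$, $\Gamma_\beta=\tfrac{9L^2}{4\lambda}+4L$, $K_\beta=4L$): the envelope identities for $D_pH,D_qH,D_xH$, the cancellation of the $b_1,\sigma_1$ blocks, and the use of \eqref{hv} to convert the $b_2,\sigma_2$ blocks into a $D_vf$-increment are exactly the computation behind the referenced lemma. The only cosmetic discrepancy is the value of $\Gamma_\beta$: the paper applies Young's inequality with weight proportional to $\lambda$ (whence the $\tfrac{9L^2}{4\lambda}$ term and the quadratic threshold $c(L,T)=A+\sqrt{A^2+4A}$), whereas you fix the Young weight and claim a $\lambda$-independent $\Gamma_\beta$. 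Either choice feeds into \eqref{big_Lam} to produce a threshold of the form $\lambda>c(L,T)$.

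One point to tighten: you dispose of (i)(b) by invoking \eqref{use_latter}, which needs the \emph{displacement} monotonicity \eqref{g_mono} of $g$; but Theorem~\ref{local_solva_mfg} only assumes (A3)(i), i.e.\ convexity of $g$ in $x$. Under (A2)--(A3)(i) the most you get directly is
\[
\e\big[(\mathbf G(X')-\mathbf G(X))^\top(X'-X)\big]\ \ge\ -L\,\|X'-X\|_2^2,
\]
by freezing $m$ and using the $m$-Lipschitz bound on $D_xg$; so (i)(b) holds only up to a defect that must be absorbed into the $\Gamma_\beta$ budget of the continuation argument underlying Lemma~\ref{lem:1}. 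The paper's own proof handles this the same way---it only spells out (i)(a) and lets the extra $L\|X'-X\|_2^2$ ride with $\Gamma_\beta$---so this is more a wrinkle in the statement than a flaw in your strategy, but your sentence ``(i)(b) is immediate from \eqref{use_latter} once $g$ is (displacement) monotone'' should be replaced by this absorption argument.
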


\begin{proof}
    From a similar approach as \cite[Lemma 2.3]{AB11}, we can deduce that Condition \ref{Condition_mono}(i)(a) is satisfied with $\Lambda_\beta=\lambda$, $\Gamma_\beta=\frac{9L^2}{4\lambda}+4L$, and $K_\beta=4L$. Therefore, from \eqref{big_Lam}, we obtain the desired result. Here, $c(L,T)=A+\sqrt{A^2+4A}$, where $A:= 64T(T+1)L^3 \exp\left(32T(T+1)L^2 \right)[1+16T(T+1)L^2 \exp\left(32T(T+1)L^2 \right)]$. The exponential term comes from the use of the Gr\"onwall's inequality. As a particular case, when both $b$ and $\sigma$ do not functionally depend on $x$ and $m$, we can take $A=2T(T+1)L^3$. This is similar to the condition (3.11) in \cite{AB9'}.    
\end{proof}

Theorem~\ref{local_solva_mfg} only gives the solvability of FBSDEs \eqref{intro_2} when $T$ is small enough. From now on, we aim to obtain the global well-posedness of FBSDEs \eqref{intro_2}, which requires Condition~\ref{Condition_mono} (i)(a) to be satisfied with the parameter $\Gamma_\beta=0$ for our settings in \eqref{def:BAFG} and \eqref{choice_of_beta}. From Assumption (A1), when the coefficients $b_0$ and $\sigma_0$ do not functionally depend on $m$, for $s\in[t,T]$, $X,X',P,P'\in L^2\left(\Omega,\f,\mathbb{P};\brn\right)$ and $Q,Q'\in \left(L^2\left(\Omega,\f,\mathbb{P};\brn\right)\right)^n$, by denoting $\hv:=\hv(s,X,\lr(X),P,Q)$ and $\hv':=\hv\left(s,X',\lr(X'),P',Q'\right)$, we can compute that
\begin{align}
	&\e\bigg[\left(\mathbf{F}(s,X',P',Q')-\mathbf{F}(s,X,P,Q)\right)^\top  (X'-X)+\left(\mathbf{B}(s,X',P',Q')-\mathbf{B}(s,X,P,Q)\right)^\top  (P'-P) \notag \\
	&\quad +\sum_{j=1}^n \left(\mathbf{A}^j(s,X',P',Q')-\mathbf{A}^j(s,X,P,Q)\right)^\top  \left({Q'}^{j}-Q^j\right)\bigg] \notag \\
    =\ & -\e\left\{ \left[\begin{pmatrix}  D_x f\\ D_v f\end {pmatrix}(s,\cdot)\bigg|^{\left(X',\lr(X'),\hv\left(s,X',\lr(X'),P',Q'\right)\right)}_{\left(X,\lr(X),\hv(s,X,\lr(X),P,Q)\right)} \right]^\top  \begin{pmatrix}  X'-X\\ \hv'-\hv\end {pmatrix}\right\}, \label{compute_1}
\end{align}    
with which we shall introduce different monotonicity assumptions on $f$ so as to ensure the right side of \eqref{compute_1} to be smaller than $\e\left[-\Lambda_\beta\left|\hv\left(s,X',\lr(X'),P',Q'\right)-\hv(s,X,\lr(X),P,Q))\right|^2\right]$ for some $\Lambda_\beta>0$. Based on the displacement quasi-monotonicity condition in Definition~\ref{def:displacement_quasi}, we first propose the following weak monotonicity condition on $f$ for separable case first, and then in Condition~\ref{assumption_mono_1} we shall introduce the one for non-separable case.

\begin{condition}[\bf Displacement quasi-monotonicity for separable cases]\label{condi:SA1}
    The functional $f$ is separable in $m$ and $v$: $f(s,x,m,v)=f_0(s,x,m)+f_1(s,x,v)$, where the functional $f_1$ satisfies the following strong convexity: there exists $\lambda_v>0$ and $\lambda_x\geq 0$, such that 
    \begin{align}
	    f_1\left(s,x',v'\right)-f_1(s,x,v)\geq\ & \left[\begin{pmatrix}  D_x f_1\\ D_v f_1\end {pmatrix}(s,x,v)\right]^\top  \begin{pmatrix}  x'-x\\ v'-v\end {pmatrix}+\lambda_x \left|x'-x\right|^2 +\lambda_v \left|v'-v\right|^2;\label{strong_convex_0}
    \end{align}
    the functional $f_0$ satisfies the following displacement quasi-monotonicity: there exists $\lambda_m\le 2\lambda_x$, such that
    \begin{equation}\label{dis_1}
    \begin{split}
        \e\left[\left(D_x f_0 \left(s, \xi', \lr(\xi')\right)-D_x f_0 (s,\xi,\lr(\xi))\right)^\top  \left(\xi'-\xi\right)\right]\geq -\lambda_m \left\|\xi'-\xi\right\|_2^2,\quad \forall \xi,\xi'\in L^2(\Omega,\f,\mathbb{P};\brn).
    \end{split}
    \end{equation}
\end{condition}

\begin{remark}
    Condition~\ref{condi:SA1} allows $\lambda_m$ to be positive, that is, the running cost functional $f_0$ can be displacement quasi-monotonic. Particularly, when $\lambda_m=0$, \eqref{dis_1} is reduced to the classical displacement monotonicity condition as in Remark~\ref{def:displacement}. In this case, we can allow $\lambda_x=0$, and then, Condition~\ref{condi:SA1} coincides with the assumption on $f$ in \cite{SA1}.
\end{remark}

We next show that Condition~\ref{condi:SA1} can ensure our $\beta$-monotonicity in Condition~\ref{Condition_mono} with $\Gamma_\beta=0$.

\begin{theorem}\label{thm:condi:sepa}
    Under Assumptions (A1)-(A3), suppose that the functions $b_0$ and $\sigma_0$ do not functionally depend on $m$, the terminal cost functional $g$ satisfies the displacement monotonicity condition \eqref{g_mono}, and the running cost functional $f$ satisfies Condition~\ref{condi:SA1}. Then, coefficients in \eqref{def:BAFG} satisfy Condition~\ref{Condition_mono} (i)(a) with with the choice of $\beta$ in \eqref{choice_of_beta} and the parameter $\Gamma_\beta=0$. As a consequence, FBSDEs \eqref{intro_2} have a unique global solution.
\end{theorem}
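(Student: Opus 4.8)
The plan is to establish Condition~\ref{Condition_mono}~(i)(a) with $\Gamma_\beta=0$ directly from the identity \eqref{compute_1}, which is available here precisely because $b_0$ and $\sigma_0$ are taken to be $m$-independent. The remaining ingredients of Condition~\ref{Condition_mono} are already in hand: part (i)(b) holds by the displacement monotonicity of $g$ via \eqref{use_latter}, and the $\beta$-Lipschitz property (ii) holds with $K_\beta=C(L)$ by (A1)--(A2), as recorded above the theorem. So the whole task reduces to bounding below the bracketed expectation on the right of \eqref{compute_1}, where throughout I write $\hv:=\hv(s,X,\lr(X),P,Q)$ and $\hv':=\hv(s,X',\lr(X'),P',Q')$ and suppress the time variable $s$.

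First I would use separability $f=f_0+f_1$ to split the gradient difference as
\[
\begin{pmatrix} D_x f\\ D_v f\end{pmatrix}\bigg|^{(X',\lr(X'),\hv')}_{(X,\lr(X),\hv)}
=\begin{pmatrix} D_x f_0|^{(X',\lr(X'))}_{(X,\lr(X))}\\ 0\end{pmatrix}
+\begin{pmatrix} D_x f_1\\ D_v f_1\end{pmatrix}\bigg|^{(X',\hv')}_{(X,\hv)},
\]
which is legitimate since $f_0$ is free of $v$ and $f_1$ is free of $m$. Pairing against $(X'-X,\hv'-\hv)^\top$ and taking expectations then decomposes the quantity into an $f_0$-part, equal to $\e[(D_x f_0(X',\lr(X'))-D_x f_0(X,\lr(X)))^\top(X'-X)]$, and an $f_1$-part.

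For the $f_1$-part I would symmetrize the strong-convexity inequality \eqref{strong_convex_0}: writing it once for the pair $((X,\hv),(X',\hv'))$ and once with the two points exchanged, then adding, gives pointwise in $\omega$
\[
\left[\begin{pmatrix} D_x f_1\\ D_v f_1\end{pmatrix}\bigg|^{(X',\hv')}_{(X,\hv)}\right]^\top
\begin{pmatrix} X'-X\\ \hv'-\hv\end{pmatrix}
\ge 2\lambda_x|X'-X|^2+2\lambda_v|\hv'-\hv|^2,
\]
so after integration the $f_1$-part is at least $2\lambda_x\|X'-X\|_2^2+2\lambda_v\|\hv'-\hv\|_2^2$. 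For the $f_0$-part, the displacement quasi-monotonicity \eqref{dis_1} with $\xi=X$ and $\xi'=X'$ yields the lower bound $-\lambda_m\|X'-X\|_2^2$. Adding the two estimates, the bracket in \eqref{compute_1} is bounded below by $(2\lambda_x-\lambda_m)\|X'-X\|_2^2+2\lambda_v\|\hv'-\hv\|_2^2$; because $\lambda_m\le2\lambda_x$ the $X$-term is nonnegative and may be dropped. Hence the right-hand side of \eqref{compute_1} is at most $-2\lambda_v\,\e[|\hv'-\hv|^2]$, which is exactly \eqref{monotonicity} for $\beta$ as in \eqref{choice_of_beta} with $\Lambda_\beta=2\lambda_v>0$ and $\Gamma_\beta=0$. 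Since $\Gamma_\beta=0$, Lemma~\ref{lem:1} applies for every horizon $T$ and yields the unique global adapted solution.

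Everything past \eqref{compute_1} is routine; the single load-bearing point is the inequality $\lambda_m\le2\lambda_x$, which is what allows the potentially destabilizing displacement quasi-monotone part $f_0$ (with $\lambda_m>0$ permitted) to be absorbed into the convexity margin $2\lambda_x$ of $f_1$ in the $x$-direction, while the strict convexity $\lambda_v>0$ in $v$ is left intact to supply the strictly negative $\beta$-term that makes $\Gamma_\beta=0$ possible. I would therefore be careful to keep the factor $2$ produced by the symmetrization, since the argument compares $2\lambda_x$ with $\lambda_m$, not $\lambda_x$ with $\lambda_m$.
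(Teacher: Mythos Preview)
Your proposal is correct and follows essentially the same route as the paper: split via $f=f_0+f_1$, use the strong convexity \eqref{strong_convex_0} of $f_1$ (via symmetrization) to get the $2\lambda_x$ and $2\lambda_v$ terms, use the displacement quasi-monotonicity \eqref{dis_1} of $f_0$ to get the $-\lambda_m$ term, and conclude with $\Lambda_\beta=2\lambda_v$, $\Gamma_\beta=0$ from $\lambda_m\le 2\lambda_x$. The paper's proof is identical in structure and constants; your explicit mention of the symmetrization and of why the factor $2$ matters is a helpful clarification but not a departure.
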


\begin{proof}
For the sake of notational convenience, in this proof, we denote by  $\hv:=\hv(s,X,\lr(X),P,Q)$ and $\hv':=\hv\left(s,X',\lr(X'),P',Q'\right)$. From \eqref{compute_1} and the separable assumption in Condition~\ref{condi:SA1}, we have
\begin{align*}
	&\e\bigg[\left(\mathbf{F}(s,X',P',Q')-\mathbf{F}(s,X,P,Q)\right)^\top  (X'-X)+\left(\mathbf{B}(s,X',P',Q')-\mathbf{B}(s,X,P,Q)\right)^\top  (P'-P) \notag \\
	&\quad +\sum_{j=1}^n \left(\mathbf{A}^j(s,X',P',Q')-\mathbf{A}^j(s,X,P,Q)\right)^\top  \left({Q'}^{j}-Q^j\right)\bigg] \notag \\
    \ =& -\e\left\{ \left[\begin{pmatrix}  D_x f_1\\ D_v f_1\end {pmatrix}(s,\cdot)\bigg|^{\left(X',\hv'\right)}_{\left(X,\hv\right)} \right]^\top  \begin{pmatrix}  X'-X\\ \hv'-\hv\end{pmatrix} +\left(D_xf_0(s,\cdot)\Big|^{(X',\lr(X'))}_{(X,\lr(X))} \right)^\top  (X'-X)\right\}.
\end{align*}    
From the displacement quasi-monotonicity \eqref{dis_1}, we know that
\begin{align*}
    -\e\Big[\left(D_xf_0(s,X',\lr(X'))-D_xf_0(s,X,\lr(X))\right)^\top  (X'-X)\Big]\le \lambda_m \left\|\xi'-\xi\right\|_2^2,
\end{align*}
and from the convexity \eqref{strong_convex_0}, we have
\begin{align*}
    &-\e\Big[\left(D_vf_1\left(s,X',\hv'\right)-D_vf_1\left(s,X,\hv \right)\right)^\top   \left(\hv'-\hv\right) +\left(D_xf_1(s,X',\hv')-D_xf_1(s,X,\hv)\right)^\top  (X'-X)\Big]\\
    \le\ & -2\lambda_x\|\xi'-\xi\|_2^2-2\lambda_v\left\|\hv'-\hv\right\|_2^2.
\end{align*}
Combining the last two inequalities, we can see that
\begin{align*}
    & -\e\left\{ \left[\begin{pmatrix}  D_x f_1\\ D_v f_1\end {pmatrix}(s,\cdot)\bigg|^{\left(X',\hv'\right)}_{\left(X,\hv\right)} \right]^\top  \begin{pmatrix}  X'-X\\ \hv'-\hv\end{pmatrix} +\left(D_xf_0(s,\cdot)\Big|^{(X',\lr(X'))}_{(X,\lr(X))} \right)^\top  (X'-X)\right\}\\
    \le\ & (\lambda_m-2\lambda_x)\|\xi'-\xi\|_2^2-2\lambda_v\left\|\hv'-\hv\right\|_2^2,
\end{align*}
and since $\lambda_m\le 2\lambda_x$, we know that Condition \ref{Condition_mono} (i)(a) is valid with the choice of $\beta$ in \eqref{choice_of_beta} for $\Lambda_\beta=2\lambda_v$ and $\Gamma_\beta=0$. 
\end{proof}

We now consider the case when $f$ is not separable in $x$ and $v$. We propose the following strong convexity and small mean field effect condition, which is also used in our previous work \cite{AB11}.

\begin{condition}[\bf Strong convexity and small mean field effect]\label{def:small_mf}
    There exist nonnegative constants $L_x,L_v\le L$, such that 
    \begin{equation}\label{small_mean_field}
    \begin{split}
        \left|D_x f(s,x',m',v')-D_x f(s,x,m,v)\right|\le\ & L\left(|x'-x|+|v'-v|\right)+L_x W_2(m,m'),\\
        \left|D_v f(s,x',m',v')-D_v f(s,x,m,v)\right|\le\ & L\left(|x'-x|+|v'-v|\right)+L_v W_2(m,m');
    \end{split}
    \end{equation}
    and there exist $\lambda_v>0$ and $\lambda_x\geq\frac{L_v^2}{8\lambda_v}+\frac{L_x}{2}$ such that
    \begin{align}
	    &f\left(s,x',m,v'\right)-f(s,x,m,v)\geq \left[ \begin{pmatrix} D_x f\\ D_v f \end{pmatrix}(s,x,m,v)\right]^\top \begin{pmatrix} x'-x\\ v'-v \end{pmatrix}+\lambda_x \left|x'-x\right|^2 +\lambda_v \left|v'-v\right|^2.\label{strong_convex_1}
    \end{align}
\end{condition}

Here, in the relation $\lambda_x\geq\frac{L_v^2}{8\lambda_v}+\frac{L_x}{2}$, the parameter $\frac{1}{8}$ is not the optimal, but we do not drill down into the details. We first give the relation of Condition~\ref{def:small_mf} with the displacement monotonicity condition when $f$ is independent of $v$.

\begin{remark}
    For a functional $f$ satisfying Condition~\ref{def:small_mf} and independent of $v$, the continuity \eqref{small_mean_field} reduces to
    \begin{align}\label{rk:1_1}
        \left|D_x f(s,x',m')-D_x f(s,x,m)\right|\le L|x'-x|+L_x W_2(m,m'),
    \end{align}
    and the convexity \eqref{strong_convex_1} reduces to
    \begin{align}\label{rk:1_2}
        &f\left(s,x',m\right)-f(s,x,m)\geq \left(D_x f (s,x,m)\right)^\top  \left(x'-x\right)+\lambda_x \left|x'-x\right|^2,
    \end{align}
    with the constants satisfying $\lambda_x\geq \frac{L_x}{2}$. For any square-integrable random variables $\xi$ and $\xi'$ on the same probability space, we know that
    \begin{align*}
        &\e\left[\left(D_x f \left(s,\xi',\lr(\xi')\right)-D_x f (s,\xi,\lr(\xi))\right)^\top  \left(\xi'-\xi\right)\right]\\
        =\ &\e\left[\left(D_x f \left(s,\xi',\lr(\xi')\right)-D_x f (s,\xi,\lr(\xi'))\right)^\top  \left(\xi'-\xi\right)\right]\\
        &+\e\left[\left(D_x f \left(s,\xi,\lr(\xi')\right)-D_x f (s,\xi,\lr(\xi))\right)^\top  \left(\xi'-\xi\right)\right].
    \end{align*}
    From the convexity \eqref{rk:1_2}, we know that
    \begin{align*}
        &\e\left[\left(D_x f \left(s,\xi',\lr(\xi')\right)-D_x f (s,\xi,\lr(\xi'))\right)^\top  \left(\xi'-\xi\right)\right]\geq  2\lambda_x \left\|\xi'-\xi\right\|_2^2,
    \end{align*}
    and from \eqref{rk:1_1}, we can compute that
    \begin{align*}
        \e\left[\left(D_x f \left(s,\xi,\lr(\xi')\right)-D_x f (s,\xi,\lr(\xi))\right)^\top  \left(\xi'-\xi\right)\right]\geq -L_x \left\|\xi'-\xi\right\|_2^2.
    \end{align*}
    Combining the last two inequalities, we have 
    \begin{align*}
        &\e\left[\left(D_x f \left(s,\xi',\lr(\xi')\right)-D_x f (s,\xi,\lr(\xi))\right)^\top  \left(\xi'-\xi\right)\right]\geq (2\lambda_x-L_x) \left\|\xi'-\xi\right\|_2^2;
    \end{align*}
    since $\lambda_x\geq \frac{L_x}{2}$, we know that $f$ satisfies the displacement monotonicity condition. 
    
    However, when $f$ is dependent on $v$, Condition~\ref{def:small_mf} cannot be included by Condition~\ref{condi:SA1}. For the very special linear-quadratic case $f(x,m,v):=|x|^2+|v|^2+v^\top \int_{\brn}y m(dy)$, it can be easily to check that $f$ satisfies Condition~\ref{def:small_mf}, yet $f$ is not separable in $m$ and $v$.
\end{remark}

We now show that Condition~\ref{def:small_mf} can also ensure our $\beta$-monotonicity in Condition~\ref{Condition_mono} with $\Gamma_\beta=0$.

\begin{theorem}\label{thm:FBSDES_small_mf}
    Under Assumptions (A1)-(A3), suppose that the functions $b_0$ and $\sigma_0$ do not functionally depend on $m$, the terminal cost functional $g$ satisfies the displacement monotonicity condition \eqref{g_mono}, and the running cost functional $f$ satisfies Condition~\ref{def:small_mf}. Then, coefficients in \eqref{def:BAFG} satisfy Condition~\ref{Condition_mono} (i)(a) with with the choice of $\beta$ in \eqref{choice_of_beta} and the parameter $\Gamma_\beta=0$. As a consequence, FBSDEs \eqref{intro_2} have a unique global solution.
\end{theorem}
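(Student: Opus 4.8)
The plan is to verify Condition~\ref{Condition_mono}(i)(a) with $\Gamma_\beta=0$, since this is the only substantive point: the $\beta$-Lipschitz bounds in Condition~\ref{Condition_mono}(ii) have already been checked (with $K_\beta=C(L)$), and part (i)(b) follows directly from the displacement monotonicity \eqref{g_mono} of $g$ via the identity \eqref{use_latter}. Writing $\hv:=\hv(s,X,\lr(X),P,Q)$ and $\hv':=\hv(s,X',\lr(X'),P',Q')$ and using \eqref{compute_1} (which is available precisely because $b_0,\sigma_0$ are $m$-independent), it suffices to bound
\[
-\e\left\{\left[\begin{pmatrix} D_x f\\ D_v f\end{pmatrix}(s,\cdot)\bigg|^{(X',\lr(X'),\hv')}_{(X,\lr(X),\hv)}\right]^\top\begin{pmatrix} X'-X\\ \hv'-\hv\end{pmatrix}\right\}
\]
from above by $-\lambda_v\|\hv'-\hv\|_2^2$. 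I would split the gradient increment into a convexity part, in which the measure argument is frozen, and a mean-field part, in which only the measure varies.

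For the convexity part I would freeze the measure at $\lr(X')$ and apply the strong convexity \eqref{strong_convex_1} in the usual two-sided way (adding the convexity inequality to its mirror image with the roles of $(X,\hv)$ and $(X',\hv')$ swapped), turning convexity into monotonicity. Taking expectations gives
\[
\e\left[\left(\begin{pmatrix} D_x f\\ D_v f\end{pmatrix}(s,X',\lr(X'),\hv')-\begin{pmatrix} D_x f\\ D_v f\end{pmatrix}(s,X,\lr(X'),\hv)\right)^\top\begin{pmatrix} X'-X\\ \hv'-\hv\end{pmatrix}\right]\ge 2\lambda_x\|X'-X\|_2^2+2\lambda_v\|\hv'-\hv\|_2^2.
\]

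For the mean-field part, namely $(D_xf,D_vf)(s,X,\lr(X'),\hv)-(D_xf,D_vf)(s,X,\lr(X),\hv)$, I would apply the small mean field effect \eqref{small_mean_field} pointwise to get the bounds $L_x W_2(\lr(X),\lr(X'))$ and $L_v W_2(\lr(X),\lr(X'))$ for the $D_xf$ and $D_vf$ increments, then use Cauchy--Schwarz and the elementary estimate $W_2(\lr(X),\lr(X'))\le\|X'-X\|_2$. This controls the mean-field contribution by $L_x\|X'-X\|_2^2+L_v\|X'-X\|_2\|\hv'-\hv\|_2$, and a Young's inequality applied to the cross term with weight $\lambda_v$ gives $L_v\|X'-X\|_2\|\hv'-\hv\|_2\le\frac{L_v^2}{4\lambda_v}\|X'-X\|_2^2+\lambda_v\|\hv'-\hv\|_2^2$. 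Collecting the two parts, the displayed quantity is at most
\[
\left(-2\lambda_x+L_x+\frac{L_v^2}{4\lambda_v}\right)\|X'-X\|_2^2-\lambda_v\|\hv'-\hv\|_2^2,
\]
where the coefficient of $\|X'-X\|_2^2$ is nonpositive precisely because of the assumed relation $\lambda_x\ge\frac{L_v^2}{8\lambda_v}+\frac{L_x}{2}$. Thus Condition~\ref{Condition_mono}(i)(a) holds with the choice of $\beta$ in \eqref{choice_of_beta}, $\Lambda_\beta=\lambda_v>0$ and $\Gamma_\beta=0$, and the global well-posedness of \eqref{intro_2} then follows from the $\Gamma_\beta=0$ branch of Lemma~\ref{lem:1}, valid for every $\Lambda_\beta>0$.

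The main obstacle is the indefinite cross term $L_v\|X'-X\|_2\|\hv'-\hv\|_2$ arising from the non-separability of $f$ in $m$ and $v$: it has to be absorbed so that the resulting surplus in $\|X'-X\|_2^2$ is dominated by the $2\lambda_x$ gain from convexity while a strictly negative multiple of $\|\hv'-\hv\|_2^2$ survives to serve as $-\Lambda_\beta$. The Young splitting with weight exactly $\lambda_v$ is what makes this bookkeeping close, and the threshold $\lambda_x\ge\frac{L_v^2}{8\lambda_v}+\frac{L_x}{2}$ in Condition~\ref{def:small_mf} is calibrated to this very choice; a different weight would either spoil the negativity of the $\|\hv'-\hv\|_2^2$ coefficient or leave a positive $\|X'-X\|_2^2$ remainder, which would only yield $\Gamma_\beta>0$ and hence the local solvability of Theorem~\ref{local_solva_mfg} rather than the global one.
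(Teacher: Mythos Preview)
Your proposal is correct and follows essentially the same route as the paper: freeze the measure at $\lr(X')$ to extract the strong convexity gain $2\lambda_x\|X'-X\|_2^2+2\lambda_v\|\hv'-\hv\|_2^2$, bound the residual mean-field increment by $L_x\|X'-X\|_2^2+L_v\|X'-X\|_2\|\hv'-\hv\|_2$ via \eqref{small_mean_field} and $W_2(\lr(X),\lr(X'))\le\|X'-X\|_2$, then absorb the cross term with a Young inequality of weight $\lambda_v$. The paper arrives at the identical final bound with $\Lambda_\beta=\lambda_v$ and $\Gamma_\beta=0$.
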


\begin{proof}
As in the proof of Theorem~\ref{thm:condi:sepa}, we adopt the use the notations $\hv:=\hv(s,X,\lr(X),P,Q)$ and $\hv':=\hv\left(s,X',\lr(X'),P',Q'\right)$. Since $f$ satisfies \eqref{small_mean_field} and \eqref{strong_convex_1}, we can deduce that
\begin{align}
		& -\e\left\{\left[\begin{pmatrix}  D_x f\\ D_v f\end {pmatrix}(s,\cdot)\bigg|^{\left(X',\lr(X'),\hv'\right)}_{\left(X,\lr(X),\hv\right)} \right]^\top  \begin{pmatrix}  X'-X\\ \hv'-\hv\end{pmatrix} \right\} \notag \\
		\le \ &-\e\left\{\left[\begin{pmatrix}  D_x f\\ D_v f\end {pmatrix}(s,\cdot,\lr(X'),\cdot)\bigg|^{\left(X',\hv'\right)}_{\left(X,\hv\right)} \right]^\top  \begin{pmatrix}  X'-X\\ \hv'-\hv\end{pmatrix} \right\} \notag \\
        &+\e\left\{\left|\left[\begin{pmatrix}  D_x f\\ D_v f\end {pmatrix}(s,X,\cdot,\hv')\bigg|^{\lr(X')}_{\lr(X)} \right]^\top  \begin{pmatrix}  X'-X\\ \hv'-\hv\end{pmatrix} \right|\right\} \notag \\
		\le\ & -2 \e\left[\lambda_v \left| \hv'-\hv\right|^2+\lambda_x \left| X'-X \right|^2\right] +L_v \|X'-X\|_2 \cdot \left\|\hv'-\hv\right\|_2 +L_x \|X'-X\|_2^2. \label{small_mf_eff_3}
\end{align} 
Then, we see from \eqref{compute_1} and Young's inequality that
\begin{align*}
    &\e\bigg[\left(\mathbf{F}(s,X',P',Q')-\mathbf{F}(s,X,P,Q)\right)^\top  (X'-X)+\left(\mathbf{B}(s,X',P',Q')-\mathbf{B}(s,X,P,Q)\right)^\top  (P'-P) \notag \\
	&\quad +\sum_{j=1}^n \left(\mathbf{A}^j(s,X',P',Q')-\mathbf{A}^j(s,X,P,Q)\right)^\top  \left({Q'}^{j}-Q^j\right)\bigg] \notag \\
	\le\ & -2 \e\left[\lambda_v \left| \hv'-\hv\right|^2+\lambda_x \left| X'-X \right|^2\right] +L_v \|X'-X\|_2 \cdot \left\|\hv'-\hv\right\|_2 +L_x \left\|X'-X\right\|_2^2\\
    \le\ & -\lambda_v  \left\| \hv'-\hv\right\|^2_2-\left(2\lambda_x-L_x-\frac{L_v^2}{4\lambda_v}\right)\left\| X'-X \right\|^2_2 \\
    \le\ & -\lambda_v \left\| \hv'-\hv\right\|_2^2.
\end{align*}
In the last inequality, we have used the inequality in Condition~\ref{def:small_mf}: $\lambda_x\geq \frac{L_x}{2}+\frac{L_v^2}{8\lambda_v}$. Therefore, we know that Condition \ref{Condition_mono} (i)(a) is valid with the choice of $\beta$ in \eqref{choice_of_beta} by setting $\Lambda_\beta=\lambda_v$ and $\Gamma_\beta=0$. 
\end{proof}

\begin{remark}
    Although we here assume that $b_0$ and $\sigma_0$ do not functionally depend on $m$, since the functional $f$ here can be nonseparable, therefore, the optimal control $\hv$ here can depend on $m$, and hence, the coefficients
    \begin{align*}
        D_p H(s,x,m,p,q)=b\left(s,x,m,\hv(s,x,m,p,q)\right),\quad D_q H(s,x,m,p,q)=\sigma\left(s,x,m,\hv(s,x,m,p,q)\right)
    \end{align*}
    are dependent on $m$. Therefore, the coefficients of the SDE in the system \eqref{intro_2} is distribution-dependent. Moreover, since we allow the diffusion term $\sigma$ to depend on $v$, the optimal control $\hv$ can depend on $q$, which also extends the results in the existing literature via a PDE approach.
\end{remark}

In Condition~\ref{def:small_mf}, the functional $f$ does not need to be separable or displacement monotonic. From the relation $\lambda_x\geq\frac{L_v^2}{8\lambda_v}+\frac{L_x}{2}$, we can see that the dependence of $D_xf$ and $D_vf$ on $m$ should be smaller than the convexity of $f$ in $x$. Now, we further extend Conditions~\ref{condi:SA1} and \ref{def:small_mf} and propose the following monotonicity assumption for $f$. The functional $f$ is assumed to be divided into two parts (both dependent on $(s,x,m,v)$), with one part satisfying a strong convexity and small mean field effect condition, and the other part satisfying a displacement quasi-monotonicity. 

\begin{condition}\label{assumption_mono_1}
    The running cost functional $f$ is in the form:
    \begin{align}\label{condition_divide}
        f(s,x,m,v)=f_0(s,x,m,v)+f_1(s,x,m,v).
    \end{align}
    Here, the functional $f_1$ satisfies the continuity \eqref{small_mean_field} with nonnegative constants $L_x$ and $L_v$, and it also satisfies the convexity \eqref{strong_convex_1} with constants $\lambda_x\geq 0$ and $\lambda_v>0$. The functional $f_0$ is convex in $v$, and satisfies the following displacement quasi-monotonicity with a constant $\lambda_m\in\br$: for any $(s,V)\in[0,T]\times L^2(\Omega,\f,\mathbb{P};\brd)$, 
    \begin{equation}\label{random_displacement_0}
    \begin{split}
        \e\left[\left(D_x f_0 \left(s,\xi',\lr(\xi'),V\right)-D_x f_0 (s,\xi,\lr(\xi),V)\right)^\top  \left(\xi'-\xi\right)\right]\geq -\lambda_m \left\|\xi'-\xi\right\|_2^2 ,\\ 
        \forall \xi,\xi'\in L^2(\Omega,\f,\mathbb{P};\brn);
    \end{split}
    \end{equation}
    and there exists nonnegative constant $l_x$, such that 
    \begin{equation}\label{large_mono_lip_1}
    \begin{split}
        \left|D_x f_0(s,x,m,v')-D_x f_0(s,x,m,v)\right|\le\ & l_x|v'-v|,\\
        \left|D_v f_0(s,x',m',v)-D_v f_0(s,x,m,v)\right|\le\ & l_x\left(|x'-x|+W_2(m,m')\right).
    \end{split}
    \end{equation}   
    These parameters satisfy the following inequality condition:
    \begin{align}\label{parameters_condition}
        2\lambda_x-\lambda_m\geq L_x+\frac{(L_v+3l_x)^2}{4\lambda_v}.
    \end{align}    
\end{condition}

In Condition~\ref{assumption_mono_1}, we see that the dependence of $D_xf_0$ in $m$ can be large, and the $f_1$ part does not need to be monotonic. From \eqref{parameters_condition}, we can see that the strong convexity of $f_1$ in $x$ allows $f_0$ to be quasi-monotonic in $m$. Condition~\ref{assumption_mono_1} includes Condition~\ref{condi:SA1} as a special case. Actually, when $f_0$ is independent of $v$, then the parameters $l_x=0$, and the monotonicity condition \eqref{random_displacement_0} for $f_0$ is reduced to \eqref{dis_1}. Furthermore, if $f_1$ does not functionally depend on $m$, then the parameters $L_x=L_v=0$, and the convexity condition \eqref{strong_convex_1} for $f_1$ is reduced to \eqref{strong_convex_0}. Therefore, in this case, Condition~\ref{assumption_mono_1} is reduced to Condition~\ref{condi:SA1}. We also elaborate more on the condition \eqref{random_displacement_0}. When the derivative $D_x f_0$ is differentiable in $x$ and is linearly functionally differentiable in $m$ (although we do not need such regularity in this article), in view of \eqref{lem01_1}, the inequality \eqref{random_displacement_0} is equivalent to the following one: for any $(s,V)\in[0,T]\times L^2(\Omega,\f,\mathbb{P};\brd)$ and $\xi,\eta\in L^2(\Omega,\f,\mathbb{P};\brn)$,
\begin{align}\label{random_displacement_1}
    &\e\left\{\eta^\top \left[D_{x}^2 f_0 \left(s,\xi,\lr(\xi),V\right)\eta +\widetilde{\e}\left(D_xD_y\frac{d f_0}{d\nu}(s,\xi,\lr(\xi),V)\big(\widetilde{\xi}\big)\widetilde{\eta}\right)+\lambda_m\eta \right]\right\}\geq 0.
\end{align}
In particular, when the derivatives $D_x^2 f_0$ and $D_xD_y\frac{d f_0}{d\nu}$ do not functionally depend on $v$, then \eqref{random_displacement_1} is reduced to the displacement quasi-monotonicity condition as in \eqref{dis_1}. We also refer to \cite[Definition 3.4]{GW} for a similar assumption as \eqref{random_displacement_1} on the Hamiltonian functional $H$.

We next show that Condition~\ref{assumption_mono_1} yields our $\beta$-monotonicity in Condition~\ref{Condition_mono} with $\Gamma_\beta=0$, and then FBSDEs \eqref{intro_2} is well-posed.

\begin{theorem}\label{thm:mono}
    Under Assumptions (A1)-(A3), suppose that the functions $b_0$ and $\sigma_0$ do not functionally depend on $m$, the terminal cost functional $g$ satisfies the displacement monotonicity condition \eqref{g_mono}, and the running cost functional $f$ satisfies Condition~\ref{assumption_mono_1}. Then, coefficients in \eqref{def:BAFG} satisfy Condition~\ref{Condition_mono} (i)(a) with with the choice of $\beta$ in \eqref{choice_of_beta} and the parameter $\Gamma_\beta=0$. As a consequence, FBSDEs \eqref{intro_2} have a unique global solution.
\end{theorem}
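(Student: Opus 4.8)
The plan is to verify Condition~\ref{Condition_mono}(i)(a) directly from the identity \eqref{compute_1}, exploiting the additive structure $f = f_0 + f_1$ so that the monotonicity quantity on its right-hand side splits into an $f_1$-contribution and an $f_0$-contribution, which I then recombine via Young's inequality. Throughout I write $\hv := \hv(s,X,\lr(X),P,Q)$ and $\hv' := \hv(s,X',\lr(X'),P',Q')$, and set $\Delta X := X' - X$ and $\Delta\hv := \hv' - \hv$. Since $b_0,\sigma_0$ do not depend on $m$, \eqref{compute_1} is available, and the whole task reduces to bounding the paired increment of $\nabla_{(x,v)}f = \nabla_{(x,v)}f_0 + \nabla_{(x,v)}f_1$ against $\Delta X$ and $\Delta\hv$.

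First I would dispose of the $f_1$-part. Because $f_1$ obeys exactly the continuity \eqref{small_mean_field} and the strong convexity \eqref{strong_convex_1} demanded in Condition~\ref{def:small_mf} (now merely with $\lambda_x\ge 0$), the estimate \eqref{small_mf_eff_3} established in the proof of Theorem~\ref{thm:FBSDES_small_mf} applies verbatim and yields
\begin{align*}
    &-\e\left\{\left[\begin{pmatrix} D_x f_1\\ D_v f_1\end{pmatrix}(s,\cdot)\bigg|^{(X',\lr(X'),\hv')}_{(X,\lr(X),\hv)}\right]^\top \begin{pmatrix} \Delta X\\ \Delta\hv\end{pmatrix}\right\} \\
    &\qquad \le -2\lambda_v\|\Delta\hv\|_2^2 - 2\lambda_x\|\Delta X\|_2^2 + L_v\|\Delta X\|_2\|\Delta\hv\|_2 + L_x\|\Delta X\|_2^2.
\end{align*}

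The heart of the argument is the $f_0$-part, and the subtlety is that the two structural properties of $f_0$ act on different slots: the displacement quasi-monotonicity \eqref{random_displacement_0} needs the control argument frozen, whereas the convexity in $v$ (i.e.\ monotonicity of $D_v f_0$ in $v$) needs the state and measure frozen. I would therefore telescope each derivative across one slot at a time. For the $D_x f_0$-increment I split it as $\big[D_x f_0(s,X',\lr(X'),\hv') - D_x f_0(s,X',\lr(X'),\hv)\big] + \big[D_x f_0(s,X',\lr(X'),\hv) - D_x f_0(s,X,\lr(X),\hv)\big]$: the first bracket is bounded by $l_x|\Delta\hv|$ via the first line of \eqref{large_mono_lip_1}, while the second, paired with $\Delta X$ under $-\e$, contributes at most $\lambda_m\|\Delta X\|_2^2$ by \eqref{random_displacement_0} with the fixed control $V=\hv$. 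For the $D_v f_0$-increment I split it as $\big[D_v f_0(s,X',\lr(X'),\hv') - D_v f_0(s,X',\lr(X'),\hv)\big] + \big[D_v f_0(s,X',\lr(X'),\hv) - D_v f_0(s,X,\lr(X),\hv)\big]$: the first bracket paired with $\Delta\hv$ is nonnegative by convexity in $v$, hence drops out after the minus sign, and the second is controlled by $l_x(|\Delta X| + W_2(\lr(X),\lr(X')))$ via the second line of \eqref{large_mono_lip_1}, where I use $W_2(\lr(X),\lr(X'))\le\|\Delta X\|_2$. Collecting these via Cauchy--Schwarz, the full $f_0$-contribution is at most $3l_x\|\Delta X\|_2\|\Delta\hv\|_2 + \lambda_m\|\Delta X\|_2^2$, the coefficient $3l_x$ coming from the single $D_x$ cross term and the two $D_v$ cross terms.

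Finally I would add the two contributions into \eqref{compute_1}, apply Young's inequality to the mixed term as $(L_v+3l_x)\|\Delta X\|_2\|\Delta\hv\|_2 \le \lambda_v\|\Delta\hv\|_2^2 + \tfrac{(L_v+3l_x)^2}{4\lambda_v}\|\Delta X\|_2^2$, and invoke the parameter inequality \eqref{parameters_condition} to see that the net $\|\Delta X\|_2^2$-coefficient, namely $-2\lambda_x + L_x + \lambda_m + \tfrac{(L_v+3l_x)^2}{4\lambda_v}$, is nonpositive; what survives is $-\lambda_v\|\Delta\hv\|_2^2$. Thus Condition~\ref{Condition_mono}(i)(a) holds with $\Lambda_\beta=\lambda_v$ and $\Gamma_\beta=0$. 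Combining this with Condition~\ref{Condition_mono}(i)(b), which follows from the displacement monotonicity of $g$ exactly as in \eqref{use_latter}, and with the $\beta$-Lipschitz continuity already supplied by (A1)--(A2), Lemma~\ref{lem:1} in its $\Gamma_\beta=0$ regime delivers the global solvability of FBSDEs \eqref{intro_2}. I expect the main obstacle to be the bookkeeping in the $f_0$-step: choosing the telescoping order so that each monotonicity-type property of $f_0$ is applied with the correct argument frozen, and checking that all mismatch terms are absorbable into the single cross term $(L_v+3l_x)\|\Delta X\|_2\|\Delta\hv\|_2$ that \eqref{parameters_condition} is precisely designed to annihilate.
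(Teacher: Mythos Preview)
Your proposal is correct and follows essentially the same route as the paper: split \eqref{compute_1} into the $f_1$- and $f_0$-contributions, reuse \eqref{small_mf_eff_3} for $f_1$, telescope $D_xf_0$ and $D_vf_0$ so that displacement quasi-monotonicity and $v$-convexity are each applied with the appropriate slot frozen, collect the three $l_x$ cross terms, and finish with Young's inequality and \eqref{parameters_condition} to obtain $\Lambda_\beta=\lambda_v$, $\Gamma_\beta=0$. The only cosmetic difference is that the paper applies the $v$-convexity of $f_0$ at the frozen point $(X,\lr(X))$ rather than $(X',\lr(X'))$, which is immaterial.
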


\begin{proof}
We still use the notations $\hv:=\hv(s,X,\lr(X),P,Q)$ and $\hv':=\hv\left(s,X',\lr(X'),P',Q'\right)$. From \eqref{compute_1} and \eqref{condition_divide}, we know that
\begin{align}
	&\e\bigg[\left(\mathbf{F}(s,X',P',Q')-\mathbf{F}(s,X,P,Q)\right)^\top  (X'-X)+\left(\mathbf{B}(s,X',P',Q')-\mathbf{B}(s,X,P,Q)\right)^\top  (P'-P) \notag \\
	&\quad +\sum_{j=1}^n \left(\mathbf{A}^j(s,X',P',Q')-\mathbf{A}^j(s,X,P,Q)\right)^\top  \left({Q'}^{j}-Q^j\right)\bigg] \notag \\
    \ =& -\e\Bigg\{\Bigg[\begin{pmatrix}  D_x f_1\\ D_v f_1\end {pmatrix}(s,\cdot)\Bigg|^{\left(X',\lr(X'),\hv'\right)}_{\left(X,\lr(X),\hv\right)} \Bigg]^\top  \begin{pmatrix}  X'-X\\ \hv'-\hv\end{pmatrix} +\Bigg[\begin{pmatrix}  D_x f_0\\ D_v f_0\end {pmatrix}(s,\cdot)\Bigg|^{\left(X',\lr(X'),\hv'\right)}_{\left(X,\lr(X),\hv\right)} \Bigg]^\top  \begin{pmatrix}  X'-X\\ \hv'-\hv\end{pmatrix} \Bigg\}.\label{lem2_3}
\end{align} 
Since $f_1$ satisfies \eqref{small_mean_field} and \eqref{strong_convex_1}, from \eqref{small_mf_eff_3}, we then obtain
\begin{align}
		& -\e\Bigg\{\Bigg[\begin{pmatrix}  D_x f_1\\ D_v f_1\end {pmatrix}(s,\cdot)\Bigg|^{\left(X',\lr(X'),\hv'\right)}_{\left(X,\lr(X),\hv\right)} \Bigg]^\top  \begin{pmatrix}  X'-X\\ \hv'-\hv\end{pmatrix} \Bigg\}  \notag \\
		\le\ & -2 \e\left[\lambda_v \left| \hv'-\hv\right|^2+\lambda_x \left| X'-X \right|^2\right] +L_v \left\|X'-X\right\|_2 \left\|\hv'-\hv\right\|_2 +L_x \left\|X'-X\right\|_2^2. \label{small_mf_eff_3'}
\end{align} 
Since $f_0$ satisfies the displacement quasi-monotonicity condition \eqref{random_displacement_0}, we have
\begin{align}\label{thm:large_mono_1}
    -\e\Big[\left(D_xf_0\left(s,X',\lr(X'),\hv\right)-D_xf_0\left(s,X,\lr(X),\hv \right)\right)^\top   \left(X'-X\right) \Big]\le \lambda_m \left\|X'-X\right\|_2^2.
\end{align}
As $f_0$ is convex in $v$, we know that
\begin{align}
    &-\e\Big[\left(D_vf_0\left(s,X,\lr(X),\hv'\right)-D_vf_0\left(s,X,\lr(X),\hv \right)\right)^\top   \left(\hv'-\hv\right) \Big]\le 0. \label{thm:large_mono_2}
\end{align}
Combining \eqref{large_mono_lip_1} with an application of Cauchy-Schwarz inequality, we also have
\begin{equation}\label{thm:large_mono_4}
\begin{aligned}
    -\e\left\{\left( D_v f_0 (s,\cdot,\hv')\Big|^{\left(X',\lr(X')\right)}_{\left(X,\lr(X)\right)} \right)^\top  \left( \hv'-\hv\right) \right\} \le \ &  2l_x \left\|X'-X\right\|_2 \left\|\hv'-\hv\right\|_2, \\
    -\e\left\{\left( D_x f_0 \left(s,X',\lr(X'),\cdot\right)\left|^{\hv'}_{\hv} \right.\right)^\top  \left( X'-X\right) \right\} \le \ & l_x \left\|X'-X\right\|_2 \left\|\hv'-\hv\right\|_2. 
\end{aligned}
\end{equation}
From \eqref{thm:large_mono_1}-\eqref{thm:large_mono_4}, we deduce that
\begin{align}
    &-\e\Bigg\{\Bigg[\begin{pmatrix}  D_x f_0\\ D_v f_0\end {pmatrix}(s,\cdot)\bigg|^{\left(X',\lr(X'),\hv'\right)}_{\left(X,\lr(X),\hv\right)} \Bigg]^\top  \begin{pmatrix}  X'-X\\ \hv'-\hv\end{pmatrix} \Bigg\} \le \lambda_m \left\|X'-X\right\|_2^2+3l_x \left\|X'-X\right\|_2 \left\|\hv'-\hv\right\|_2. \label{small_mf_eff_8}
\end{align}
Combining \eqref{lem2_3}, \eqref{small_mf_eff_3'} and \eqref{small_mf_eff_8}, from the Young's inequality, we have
\begin{align}
    &\e\bigg[\left(\mathbf{F}(s,X',P',Q')-\mathbf{F}(s,X,P,Q)\right)^\top  (X'-X)+\left(\mathbf{B}(s,X',P',Q')-\mathbf{B}(s,X,P,Q)\right)^\top  (P'-P)  \notag \\
	&\quad +\sum_{j=1}^n \left(\mathbf{A}^j(s,X',P',Q')-\mathbf{A}^j(s,X,P,Q)\right)^\top  \left({Q'}^{j}-Q^j\right)\bigg]  \notag \\
    \le\ & -2\lambda_v \left\| \hv'-\hv\right\|_2^2 -(2\lambda_x-\lambda_m-L_x) \left\| X'-X \right\|_2^2 +(L_v+3l_x) \left\|X'-X\right\|_2 \left\|\hv'-\hv\right\|_2 \label{small_mf_eff_9} \\
    \le\ & -\lambda_v \left\|\hv'-\hv\right\|_2^2 -\left(2\lambda_x-\lambda_m-L_x-\frac{(L_v+3l_x)^2}{4\lambda_v}\right)\left\| X'-X \right\|_2^2 \notag\\
    \le\ &  -\lambda_v \left\|\hv'-\hv\right\|_2^2, \notag
\end{align}
where the last inequality follows in light of $2\lambda_x-\lambda_m\geq L_x+\frac{(L_v+3l_x)^2}{4\lambda_v}$. Therefore, Condition \ref{Condition_mono} (i)(a) is valid with the choice of $\beta$ in \eqref{choice_of_beta} for $\Lambda_\beta=\lambda_v$ and $\Gamma_\beta=0$. 
\end{proof}

In the proof of Theorem~\ref{thm:mono}, we have shown that Condition~\ref{assumption_mono_1} for the running cost functional $f$ ensure the $\beta$-monotonicity in Condition~\ref{Condition_mono} holds with the parameter $\Gamma_\beta=0$. As two particular cases of Condition~\ref{assumption_mono_1}, the classical displacement monotonicity \cite{SA,SA1,MR4509653,GW} and the small mean field effect \cite{AB11} can both be viewed as a condition to guarantee this $\beta$-monotonicity. We also refer to \cite[Remark 7.2]{AB10'} and \cite[Subsection 2.1]{AB9'} for more detailed discussions on the relationship between the small mean field effect and displacement monotonicity condition. As a direct consequence of Theorems~\ref{lem:MP1}, \ref{local_solva_mfg} and \ref{thm:mono}, we obtain the following solvability of MFG \eqref{intro_MFG}.

\begin{corollary}
    Under Assumptions (A1)-(A3), \eqref{MFG_solution} gives a local solution of MFG \eqref{intro_MFG}. Furthermore, under assumptions in Theorem~\ref{thm:mono}, \eqref{MFG_solution} is the unique global solution of MFG \eqref{intro_MFG}.
\end{corollary}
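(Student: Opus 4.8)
The plan is to obtain the corollary purely by composing the three results already established, since all the analytic content---the sufficiency of the maximum principle and the verification of $\beta$-monotonicity---is carried out upstream. The only thing to organize is which theorem supplies the solvability of FBSDEs~\eqref{intro_2} in each regime, and to confirm that the hypotheses line up so that Theorem~\ref{lem:MP1} can convert that FBSDE solvability into solvability of the mean field game.

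For the local assertion I would proceed as follows. Under Assumptions (A1), (A2) and (A3)(i)---all contained in the blanket hypothesis (A1)--(A3)---Theorem~\ref{local_solva_mfg} shows the lifted coefficients \eqref{def:BAFG} satisfy Condition~\ref{Condition_mono}, so by Lemma~\ref{lem:1} there is a constant $c(L,T)>0$ with the property that for $\lambda>c(L,T)$ the FBSDEs~\eqref{intro_2} admit a unique adapted triple $(X,P,Q)\in\sr^2_\f(t,T)\times\sr^2_{\f}(t,T)\times(\lr^2_{\f}(t,T))^n$. I would then feed this triple into Theorem~\ref{lem:MP1}: because (A3)(ii) is in force, the maximum-principle conclusion holds for every $\lambda>0$, so the feedback $v_s:=\hv(s,X_s,\lr(X_s),P_s,Q_s)$ of \eqref{MFG_solution} is the unique minimizer against the frozen flow $\hm_s=\lr(X_s)$, while the fixed-point consistency $\hm_s=\lr(X^{\hv}_s)$ is built into the forward equation. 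Since $c(L,T)\to0$ as $T\to0$, this is precisely a local (small-horizon) solution of MFG~\eqref{intro_MFG}.

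For the global assertion I would replace the local existence input by Theorem~\ref{thm:mono}. Under its hypotheses ($b_0,\sigma_0$ independent of $m$, displacement-monotone $g$, and $f$ satisfying Condition~\ref{assumption_mono_1}) the coefficients satisfy Condition~\ref{Condition_mono}(i)(a) with the decisive value $\Gamma_\beta=0$. The $\Gamma_\beta=0$ clause of Lemma~\ref{lem:1} then yields a unique global solution $(X,P,Q)$ of FBSDEs~\eqref{intro_2} for every $T$, with no smallness on $\lambda$. Inserting this global triple into Theorem~\ref{lem:MP1} exactly as above shows \eqref{MFG_solution} solves MFG~\eqref{intro_MFG} on all of $[t,T]$. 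Uniqueness is inherited from the FBSDEs: by the necessity half of the maximum principle recalled after \eqref{intro_2}, any equilibrium of MFG~\eqref{intro_MFG} induces, through its optimal control, a solution of the same FBSDEs~\eqref{intro_2}, which is unique, so the equilibrium is unique.

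The whole argument is bookkeeping, and I expect the only delicate point to be keeping the two roles of $\lambda$ separate: the maximum-principle step in Theorem~\ref{lem:MP1} uses (A3)(ii) to drop any lower bound on $\lambda$, whereas the genuine distinction between the local and the global statement lives entirely in the source of FBSDE solvability---Theorem~\ref{local_solva_mfg}, which needs $\lambda>c(L,T)$ and hence a short horizon, versus Theorem~\ref{thm:mono}, whose $\Gamma_\beta=0$ is what upgrades local to global solvability for arbitrary $T$. That upgrade is the one substantive mechanism, but it is already proved, so here it is merely quoted.
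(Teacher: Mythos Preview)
Your proposal is correct and matches the paper's approach exactly: the paper states the corollary without proof, noting only that it is ``a direct consequence of Theorems~\ref{lem:MP1}, \ref{local_solva_mfg} and \ref{thm:mono},'' and your write-up is precisely a careful unpacking of that sentence. The one point you add beyond the paper---tracing uniqueness of the MFG equilibrium back through the necessity direction of the maximum principle to uniqueness of the FBSDE solution---is already implicit in the uniqueness clause of Theorem~\ref{lem:MP1} combined with the FBSDE uniqueness in Theorems~\ref{local_solva_mfg} and~\ref{thm:mono}, so nothing is missing.
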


So far, we focus on the convexity setting on the terminal functional $g$, actually, our stochastic control method can be applied to some more general cases beyond usual convex settings. To this point, we first introduce a recent work of \cite{Anti_mono}, in which the authors use an analytical method to study the MFG master equation with $\sigma$ being a constant and in the absence of the displacement monotonicity condition; particularly, they assume certain non-convexity of the terminal cost $g$. More precisely, $g$ is assumed to satisfy an anti-monotonicity condition, while the Hamiltonian $H$ taken the form $H(s,m,p)=\langle A_0 x,p\rangle+H_0(x,m,p)$, where $H_0$ satisfies appropriate regularity conditions; see \cite[Assumption 6.1]{Anti_mono} for details. 

We now give a viewpoint of the above conditions from the control theoretical perspective. In our previous work \cite{AB11}, we can see that with a stochastic control approach, the convexity of $g$ is also not necessary for the local solvability of the FBSDEs associated with the MFG. Actually, even for the global solvability, the convexity and the displacement monotonicity condition of $g$ are not necessary either; indeed, we may use the following condition on $g$: there exists some $l_g>0$, such that for any $\xi$ and $\xi'$ on the same probability space,
\begin{align}
    \left|\e\left[\left(D_x g \left(\xi',\lr(\xi')\right)-D_x g (\xi,\lr(\xi))\right)^\top  \left(\xi'-\xi\right)\right]\right| \le l_g \left\|\xi'-\xi\right\|_2^2, \label{g_generic_condition}
\end{align}
which resembles to the anti-monotonicity condition \cite[(1.5)]{Anti_mono}. Here, we assume the running cost functional $f$ satisfies Condition~\ref{assumption_mono_1}, which allows the Hamiltonian $H$ to be a general one; and due to the page limit, we only give a sketch of motivation for a simple case with $\sigma$ being constant. In view of the continuation method, we should study the continuity of the solution of the following FBSDEs in initial $\xi\in\lr^2_{\f_t}$, with an input $\left(I^b,I^f,I^g\right)\in\lr^2_\f(0,T)\times\lr^2_\f(0,T)\times \lr^2_{\f_T}$:
\begin{equation}\label{FBSDEs_g}
\left\{
\begin{aligned}
    &X_s=\xi+\int_t^s \left[\gamma D_p H(r,X_r,\lr(X_r),P_r,Q_r) +I^b_r\right] dr+ \int_t^s \sigma dB_r,\\
    &P_s=\gamma D_x g\left(X_T,\lr(X_T)\right)+I^g+\int_s^T \left[\gamma D_x H(r,X_r,\lr(X_r),P_r,Q_r)+I^f_r\right] dr-\int_s^T Q_rdB_r,
\end{aligned}
\right.
\end{equation}
where $\gamma\in[0,1]$. For any two initials $\xi^1$ and $\xi^2$ and any two inputs $((I^b)^1,(I^f)^1,(I^g)^1)$ and $((I^b)^2,(I^f)^2,(I^g)^2)$, we denote by $(X^1,P^1,Q^1,v^1)$ and $(X^2,P^2,Q^2,v^2)$ the corresponding solutions of FBSDEs \eqref{FBSDEs_g}, and denote by $\de X:=X'-X$ (we also adopt similar notations for all other differences). Then, from \eqref{small_mf_eff_9}, we know that
\begin{align*}
    &\e\left[\gamma\left[ D_x g\left(X^2_T,\lr(X^2_T)\right)-D_x g\left(X^1_T,\lr(X^1_T)\right)\right]^\top \de X_T  +\left(\de I^g\right)^\top \de X_T - \left(\de P_t\right)^\top \de\xi \right]\\
    \le\ &\e \int_t^T \gamma\left[-\lambda_v|\de v_s|^2 - \left(2\lambda_x-\lambda_m-L_x-\frac{(L_v+3l_x)^2}{4\lambda_v}\right)|\de X_s|^2\right]ds\\
    &+ \e\int_t^T \left( |\de P_s|\cdot |\de I^b_s| + |\de X_s|\cdot |\de I^f_s|\right) ds.
\end{align*}
Then, from \eqref{g_generic_condition}, we know that 
\begin{align}
    &\gamma \e \int_t^T \left[\lambda_v|\de v_s|^2 + \left(2\lambda_x-\lambda_m-L_x-\frac{(L_v+3l_x)^2}{4\lambda_v}\right)|\de X_s|^2\right]ds \notag \\
    \le\ &\e\left[\gamma l_g|\de X_T|^2 + |\de P_t|\cdot |\de\xi|+|\de X_T|\cdot |\de I^g|+\int_t^T \left( |\de P_s|\cdot |\de I^b_s| + |\de X_s|\cdot |\de I^f_s|\right) ds \right]. \label{g_generic_1}
\end{align}
By applying It\^o's formula on $|\de X_s|^2$ and then using Young's inequality, we can compute that
\begin{align*}
    \e\left[|\de X_T|^2\right]=\ & \e\left[|\de\xi|^2+2\gamma\int_t^T \left[(\de X_s)^\top b_1(s)\de X_s+(\de X_s)^\top b_2(s)\de v_s \right] ds+2\int_t^T |\de I^b_s|\cdot |\de X_s| ds\right]\\
    \le\ & \e\left[|\de\xi|^2+2\gamma L\int_t^T \left( |\de X_s|^2 +|\de X_s|\cdot |\de v_s|\right) ds +2\int_t^T |\de I^b_s|\cdot |\de X_s| ds \right]\\
    \le\ & \e \left[|\de\xi|^2+\gamma A\int_t^T \left[ \lambda_v|\de v_s|^2 +\left(2\lambda_x-\lambda_m-L_x-\frac{(L_v+3l_x)^2}{4\lambda_v}\right)|\de X_s|^2\right] ds\right]
\end{align*}
where 
\begin{align*}
    A:=\frac{L\lambda_v+L\sqrt{\lambda_v^2+\lambda_v\left(2\lambda_x-\lambda_m-L_x-\frac{(L_v+3l_x)^2}{4\lambda_v}\right)}}{\lambda_v \left(2\lambda_x-\lambda_m-L_x-\frac{(L_v+3l_x)^2}{4\lambda_v}\right)}.
\end{align*}
Then, by substituting \eqref{g_generic_1} into above inequality, we know that
\begin{align*}
    (1-\gamma Al_g)\e\left[|\de X_T|^2\right]\le (A+2)\e\bigg[ &|\de\xi|^2+|\de P_t|\cdot |\de\xi|+|\de X_T|\cdot |I^g|\\
    &+\int_t^T \left( |\de P_s|\cdot |\de I^b_s| + |\de X_s|\cdot |\de I^f_s|+|\de X_s|\cdot |\de I^b_s|\right) ds \bigg].
\end{align*}
For if 
\begin{align}\label{condition_new}
    Al_g<1,
\end{align}
by noting that $\gamma\in[0,1]$, we have 
\begin{align*}
    \e\left[|\de X_T|^2\right]\le\ C\e\bigg[ & |\de\xi|^2+|\de P_t|\cdot |\de\xi|+|\de X_T|\cdot |I^g|\\
    &+\int_t^T \left( |\de P_s|\cdot |\de I^b_s| + |\de X_s|\cdot |\de I^f_s|\right) ds \bigg],
\end{align*}
where $C$ is a constant depending only on $(L,\lambda_x,\lambda_v,\lambda_m,L_x,L_v,l_x,l_g)$ and it is independent of $\gamma$. Substituting the last estimate back into \eqref{g_generic_1}, we know that for any $\epsilon>0$,
\begin{align}
    &\gamma \e \int_t^T |\de v_s|^2 ds \notag \\
    \le\ & C \e\left[|\de\xi|^2 + |\de P_t|\cdot |\de\xi|+|\de X_T|\cdot |\de I^g|+\int_t^T \left( |\de P_s|\cdot |\de I^b_s| + |\de X_s|\cdot |\de I^f_s|\right) ds \right]\notag\\
    \le\ & \epsilon\e\bigg[\sup_{t\le s\le T} |\de X_s|^2+\sup_{t\le s\le T} |\de P_s|^2 \bigg] + C\left(1+\frac{1}{\epsilon}\right)\e\left[|\de\xi|^2+|\de I^g|^2+  \int_t^T \left( |\de I^b_s|^2 + |\de I^f_s|^2 \right) ds\right]. \label{g_generic_2}
\end{align}
With standard estimates for SDEs, we know that
\begin{align}\label{g_generic_3}
    \e\bigg[\sup_{t\le s\le T} |\de X_s|^2\bigg]\le C(L,T) \e\left[|\de\xi|^2+\int_t^T \left(\gamma|\de v_s|^2+|\de I^b_s|^2 \right)ds\right]
\end{align}
and then, with \eqref{g_generic_3} and standard estimates for BSDEs, 
\begin{align}\label{g_generic_4}
    \e\bigg[\sup_{t\le s\le T} |\de P_s|^2+\int_t^T |\de Q_s|^2ds\bigg]\le C(L,T) \e\left[|\de I^g|^2+\int_t^T \left(\gamma|\de v_s|^2+|\de I^f_s|^2 \right)ds\right].
\end{align}
Subsittuting \eqref{g_generic_3} and \eqref{g_generic_4} into \eqref{g_generic_2}, we can see that by choosing $\epsilon$ small enough, 
\begin{align}
    \gamma\e \int_t^T |\de v_s|^2 ds \le C \e\left[|\de\xi|^2+|\de I^g|^2+  \int_t^T \left( |\de I^b_s|^2 + |\de I^f_s|^2 \right) ds\right]. \label{g_generic_5}
\end{align}
From \eqref{g_generic_3}-\eqref{g_generic_5}, we obatin that
\begin{align}
    &\e\bigg[\sup_{t\le s\le T} |\de X_s|^2+\sup_{t\le s\le T} |\de P_s|^2 +\int_t^T |\de Q_s|^2ds\bigg] \notag \\
    \le\ & C \e\left[|\de\xi|^2+|\de I^g|^2+  \int_t^T \left( |\de I^b_s|^2 + |\de I^f_s|^2 \right) ds\right],  \label{g_generic_6}
\end{align}
where the constant $C$ is independent of $\gamma$. In view of the estimate \eqref{g_generic_6}, we can recursively enlarge the parameter $\gamma$ in FBSDEs \eqref{FBSDEs_g} by using the Banach fixed point theorem; since $C$ is independent of $\gamma$, we can obtain the global solvability. This approach is typicial in the method of continuation in coefficients proposed in \cite{YH2}. The above sketchy argument will work as long as the condition \eqref{condition_new} holds, this in turn allows a flexibility to free $g$ from being convex, indeed; we also refer to \cite[Theorem 7.1]{Anti_mono} for similar conditions. We shall provide more details in our future work.

\section{Solvability of MFTC problem}\label{sec:MFTC}

We  consider the solvability of MFTC problem \eqref{intro_MFTC}. As in Section~\ref{sec:MFG}, we first give a sufficient condition of maximum principle for the MFTC problem \eqref{intro_MFTC}, and then show that the convexity assumption on $f$ is actually the $\beta$-monotonicity condition (Condition~\ref{Condition_mono}(i)) for the well-posedness of FBSDEs \eqref{FB:4} in view of the $\beta$-monotonicity in Condition~\ref{Condition_mono}.

\subsection{Maximum principle}

We take the following assumptions on coefficients.

\textbf{(B1)} The functions $b$ and $\sigma$ are linear. That is,
\begin{align*}
	&b(s,x,m,v)=b_0(s)+b_1(s)x+b_2(s)v+b_3(s)\int_\brn y\;m(dy),\\
	&\sigma(s,x,m,v)=\sigma_0(s)+\sigma_1(s)x+\sigma_2(s)v+\sigma_3(s)\int_\brn y\;m(dy),
\end{align*}
with all the norms of $b_0(s),b_1(s),b_2(s),b_3(s),\sigma_0(s),\sigma_1(s),\sigma_2(s),\sigma_3(s)$ being bounded by $L$.

\textbf{(B2)} The functions $f$ and $g$ have a  quadratic growth \eqref{quadratic_growth}. The derivatives $D_x f$, $D_v f$, $D_y\frac{df}{d\nu}$, $D_x g$ and $D_y\frac{dg}{d\nu}$ exist, and they are continuous in all of their own arguments, and they satisfy
\begin{align*}
    \left|(D_x,D_v) f(s,x',m',v')-(D_x,D_v) f(s,x,m,v)\right|\le\ & L\left(|x'-x|+|v'-v|+W_2(m,m')\right),\\
    \left|D_x g(x',m')-D_x g(x',m')\right|\le\ & L\left(|x'-x|+ W_2(m,m')\right),
\end{align*}
and for any square-integrable random variables $\xi$ and $\xi'$ on the same probability space,
\begin{align*}
    \e\left[\left|D_y \frac{df}{d\nu}\left(s,x',\lr(\xi'),v'\right)\left(\xi'\right)-D_y \frac{df}{d\nu}(s,x',\lr(\xi),v')(\xi)\right|^2\right]\le\ & L\left(|x'-x|^2+|v'-v|^2+ \left\|\xi'-\xi\right\|_2^2 \right),\\
    \e\left[\left|D_y \frac{dg}{d\nu}\left(x',\lr(\xi')\right)\left(\xi'\right)-D_y \frac{dg}{d\nu}(x',\lr(\xi))(\xi)\right|^2\right]\le\ & L\left(|x'-x|^2+ \left\|\xi'-\xi\right\|_2^2 \right).
\end{align*}

\textbf{(B3)} The terminal cost functional $g$ is convex, that is,
\begin{align}\label{convex_g_mftc}
    g\left(x',\lr(\xi')\right)-g(x,\lr(\xi))\geq \left(D_x g (x,\lr(\xi))\right)^\top \left(x'-x\right)+\e\bigg[\left(D_y\frac{d g}{d\nu}(x,\lr(\xi))(\xi)\right)^\top (\xi'-\xi)\bigg].
\end{align}
And there exists $\lambda> 0$ such that for any square-integrable random variables $\xi$ and $\xi'$ on the same probability space,
\begin{align*}
	f\left(s,x',\lr(\xi'),v'\right)-f(s,x,\lr(\xi),v)\geq \ & \left[\begin{pmatrix}  D_x f\\ D_v f \end {pmatrix}(s,x,\lr(\xi),v)\right]^\top \begin{pmatrix}  x'-x\\ v'-v\end {pmatrix} +\lambda \left|v'-v\right|^2 \\
	&+\e\left[\left(D_y\frac{d f}{d\nu}(s,x,\lr(\xi),v)(\xi)\right)^\top \left(\xi'-\xi\right)\right].
\end{align*}

Under above assumptions, we have the following sufficient maximum principle for MFTC \eqref{intro_MFTC}.

\begin{theorem}\label{lem:MP2}
	Under Assumptions (B1)-(B3), suppose that FBSDEs \eqref{FB:4} has a solution $(X,P,Q)\in \sr^2_\f(t,T)\times\lr^2_{\f}(t,T)\times\left(\lr^2_{\f}(t,T)\right)^n$. Then, MFTC problem \eqref{intro_MFTC} has a unique optimal control $v_s:=\hv\left(s,X_s,\lr(X_s),P_s,Q_s\right),\  s\in[t,T]$, where the map $\hv$ is defined in \eqref{hv}.
\end{theorem}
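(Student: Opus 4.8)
The plan is to prove Theorem~\ref{lem:MP2} as a sufficient (verification) form of the stochastic maximum principle, exploiting the convexity built into Assumption (B3). Write $\mathcal{G}(Y):=\e[g(Y,\lr(Y))]$ for $Y\in L^2(\Omega,\f,\mathbb{P};\brn)$, let $v_s:=\hv(s,X_s,\lr(X_s),P_s,Q_s)$ be the candidate control built from the given solution $(X,P,Q)$ of FBSDEs~\eqref{FB:4}, and first observe that $X=X^v$: indeed, by the first-order condition \eqref{hv} and the envelope property of the minimizer, $D_pH(s,x,m,p,q)=b(s,x,m,\hv(s,x,m,p,q))$ and $D_qH(s,x,m,p,q)=\sigma(s,x,m,\hv(s,x,m,p,q))$, so the forward equation of \eqref{FB:4} coincides with the state dynamics of \eqref{intro_MFTC} driven by $v$. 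Now fix an arbitrary admissible $u\in\lr^2_\f(t,T)$ with state $X^u$, and set $\de X:=X^u-X$, $\de v:=u-v$. The goal is to establish $J(u)-J(v)\ge\lambda\,\e\int_t^T|\de v_s|^2\,ds$, which is nonnegative and vanishes only when $u=v$, thereby yielding both optimality and uniqueness.

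First I would handle the terminal term. Applying the convexity \eqref{convex_g_mftc} pointwise with $x=X_T(\omega)$, $x'=X^u_T(\omega)$, $\xi=X_T$, $\xi'=X^u_T$ and taking expectations, the law-derivative contribution becomes, after a Fubini/independent-copy relabeling, $\e[(\widetilde{\e}[D_y\frac{dg}{d\nu}(\widetilde{X_T},\lr(X_T))(X_T)])^\top\de X_T]$; together with the $D_xg$ term this is exactly $\e[P_T^\top\de X_T]$ by the terminal condition in \eqref{FB:4}. In other words, (B3) says precisely that $\mathcal{G}$ is convex with G\^ateaux gradient $P_T$ at $X_T$ (cf.\ \eqref{lem01_1}), so $\mathcal{G}(X^u_T)-\mathcal{G}(X_T)\ge\e[P_T^\top\de X_T]$. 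The running cost is treated identically: convexity of $f$ in (B3), applied pointwise and symmetrized, gives along the optimal trajectory
\begin{align*}
    \e\!\int_t^T\!\big(f^u_s-f^v_s\big)\,ds\ \ge\ \e\!\int_t^T\!\Big\{&(D_xf)^\top\de X_s+(D_vf)^\top\de v_s+\lambda|\de v_s|^2\\
    &+\big(\widetilde{\e}[D_y\tfrac{df}{d\nu}(s,\widetilde{X_s},\lr(X_s),\widetilde{v_s})(X_s)]\big)^\top\de X_s\Big\}\,ds,
\end{align*}
where $f^u_s,f^v_s$ denote $f$ evaluated along the $u$- and $v$-trajectories and $D_xf,D_vf,\frac{df}{d\nu}$ are taken at $(s,X_s,\lr(X_s),v_s)$.

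Next I would expand $\e[P_T^\top\de X_T]$ by It\^o's formula applied to $P_s^\top\de X_s$ on $[t,T]$; since $\de X_t=\xi-\xi=0$ and the stochastic integrals vanish in expectation under the $\sr^2$/$\lr^2$ integrability guaranteed by (B2), this produces the backward drift of $P$ paired with $\de X$, together with $P_s^\top(b^u_s-b^v_s)+\mathrm{tr}[(Q_s)^\top(\sigma^u_s-\sigma^v_s)]$. Substituting this expansion and the two convexity bounds into $J(u)-J(v)=\e\int_t^T(f^u_s-f^v_s)\,ds+\mathcal{G}(X^u_T)-\mathcal{G}(X_T)$, I would then eliminate every first-order term: the envelope identities $D_xH=D_xL|_{\hv}$, $D_pH=b(\cdot,\hv)$, $D_qH=\sigma(\cdot,\hv)$ make the $D_xf$, $b_1$ and $\sigma_1$ contributions cancel, the first-order condition $D_vL=0$ at $v=\hv$ makes the $D_vf$, $b_2$ and $\sigma_2$ contributions cancel, and the measure-derivative terms cancel as described below. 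What survives is exactly $\lambda\,\e\int_t^T|\de v_s|^2\,ds\ge 0$, with equality (since $\lambda>0$) forcing $\de v=0$, which gives uniqueness of the optimal control.

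The main obstacle is the bookkeeping of the McKean--Vlasov law-derivative terms: one must verify that the extra adjoint terms $\widetilde{\e}[D_y\frac{dH}{d\nu}(r,\widetilde{X_r},\lr(X_r),\widetilde{P_r},\widetilde{Q_r})(X_r)]$ appearing in \eqref{FB:4} exactly absorb the $D_y\frac{df}{d\nu}$ contribution generated by the running-cost convexity (and, at the terminal time, the $D_y\frac{dg}{d\nu}$ contribution absorbed already in identifying $P_T$ with the gradient of $\mathcal{G}$). This rests on the exchangeability of $(X_s,\widetilde{X_s})$ and a Fubini interchange: since $\widetilde{v_s}=\hv(s,\widetilde{X_s},\lr(X_s),\widetilde{P_s},\widetilde{Q_s})$, the $\frac{df}{d\nu}$ part of $D_y\frac{dH}{d\nu}$ matches the symmetrized convexity term verbatim but with opposite sign. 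The remaining, $b$- and $\sigma$-dependent, part of $D_y\frac{dH}{d\nu}$ is controlled by the linearity in (B1): the measure-dependence enters only through $\int_\brn y\,m(dy)$, so $\frac{db}{d\nu}(m)(y)=b_3(s)y$ and $\frac{d\sigma}{d\nu}(m)(y)=\sigma_3(s)y$, and the corresponding terms pair against the mean-field pieces $b_3\,\e[\de X_s]$ and $\sigma_3\,\e[\de X_s]$ of $b^u_s-b^v_s$ and $\sigma^u_s-\sigma^v_s$ to cancel. Once this cancellation is established, the remainder is the routine convex/verification estimate sketched above.
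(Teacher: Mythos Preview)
Your proposal is correct and follows the standard verification argument for the mean-field stochastic maximum principle. Note that the paper itself does not give a proof of this theorem: it simply states that the argument is similar to that in \cite{AB8} and omits it. Your outline is fully consistent with that reference and with the detailed proofs the paper does give for the generic-drift analogues (Theorems~\ref{lem:MP1'} and~\ref{lem:MP2'}): apply It\^o to $P_s^\top(X^u_s-X_s)$, invoke the convexity in (B3) for both $f$ and $g$, use the envelope identities $D_pH=b(\cdot,\hv)$, $D_qH=\sigma(\cdot,\hv)$, $D_xH=D_xL|_{\hv}$ together with the first-order condition $D_vL|_{\hv}=0$, and handle the law-derivative terms by the Fubini/independent-copy relabeling exactly as you describe. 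In particular, your treatment of the $b_3,\sigma_3$ pieces via $D_y\frac{db}{d\nu}=b_3$, $D_y\frac{d\sigma}{d\nu}=\sigma_3$ and the resulting cancellation in expectation is the right mechanism, and the residual $\lambda\,\e\int_t^T|\de v_s|^2\,ds$ gives both optimality and uniqueness.
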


The proof of Theorem~\ref{lem:MP2} is similar to that in \cite{AB8}, and is thus omitted here.

\subsection{Well-posedness of the forward-backward system}

We shall use Lemma~\ref{lem:1} to give the well-posedness of our FBSDEs \eqref{FB:4}, which can be viewed as a subcase of FBSDEs \eqref{FB:11} by setting
\begin{equation}\label{def:BAFG'}
	\begin{split}
		&\mathbf{B}(s,X,P,Q):=D_p H(s,X,\lr(X),P,Q),\\
		&\mathbf{A}(s,X,P,Q):=D_q H(s,X,\lr(X),P,Q),\\
		&\mathbf{F}(s,X,P,Q):=-D_x H(s,X,\lr(X),P,Q)-\widetilde{\e}\left[D_y \frac{d H}{d\nu}\left(s,\widetilde{X},\lr(X),\widetilde{P},\widetilde{Q}\right)(X) \right],\\
		&\mathbf{G}(X):=D_xg(X,\lr(X)) +\widetilde{\e}\left[D_y\frac{d g}{d\nu}\left(\widetilde{X},\lr(X)\right)(X)\right],
	\end{split}
\end{equation}
for $X,P\in L^2\left(\Omega,\f,\mathbb{P};\brn\right)$ and $Q\in L^2\left(\Omega,\f,\mathbb{P};\br^{n\times n}\right)$, where $\widetilde{X}$, $\widetilde{P}$ and $\widetilde{Q}$ are respectively independent copies of $X$, $P$ and $Q$. We now show that Assumptions (B1)-(B3) can ensure the $\beta$-monotonicity in Condition~\ref{Condition_mono}, and then imply with the global well-posedness of FBSDEs \eqref{FB:4}. The choices of $\mathbf{F}$ and $\mathbf{G}$ in \eqref{def:BAFG'} are different from that in \eqref{def:BAFG} due to the arithmetic difference between the backward equation in FBSDEs \eqref{FB:4} and in FBSDEs \eqref{intro_2} respectively, which is because the state process of the MFTC problem \eqref{intro_MFTC} depends simultaneously on the distribution of the current controlled state, while the state process of MFG \eqref{intro_MFG} depends on the the equilibrium distribution from the population.

\begin{theorem}
    Under Assumptions (B1)-(B3), coefficients in \eqref{def:BAFG'} satisfy Condition~\ref{Condition_mono} with the map $\beta(s,X,P,Q):=\hv(s,X,\lr(X),P,Q)$ and the parameter $\Gamma_\beta=0$. As a consequence, FBSDEs \eqref{FB:4} have a unique adapted solution.
\end{theorem}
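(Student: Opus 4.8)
The plan is to verify that the coefficients in \eqref{def:BAFG'} satisfy every part of Condition~\ref{Condition_mono} with the stated choice $\beta(s,X,P,Q)=\hv(s,X,\lr(X),P,Q)$ and with $\Gamma_\beta=0$, and then to invoke Lemma~\ref{lem:1}; since $\Gamma_\beta=0$, the smallness requirement \eqref{big_Lam} is vacuous and a unique adapted solution follows for any admissible $\Lambda_\beta>0$. The conceptual point organising the whole argument is that Assumption (B3) is exactly the statement that the two lifted functionals $F(X,V):=\e[f(s,X,\lr(X),V)]$ and $G(X):=\e[g(X,\lr(X))]$ are convex on $L^2$, with $F$ carrying the strong-convexity margin $\lambda\|V'-V\|_2^2$ in the control variable. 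Indeed, by \eqref{lem01_1} the $L^2$-gradients are $D_X F(X,V)=D_xf(s,X,\lr(X),V)+\widetilde{\e}[D_y\frac{df}{d\nu}(s,\tx,\lr(X),\widetilde{V})(X)]$, $D_V F(X,V)=D_vf(s,X,\lr(X),V)$, and $D_X G(X)=\mathbf{G}(X)$, so the inequalities in (B3) and \eqref{convex_g_mftc} are precisely the first-order convexity inequalities for $F$ and $G$ in $L^2$.

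For the $\beta$-monotonicity (i)(a) I would first record the first-order condition \eqref{hv}, namely $b_2(s)^\top P+\sum_j\sigma_2^j(s)^\top Q^j+D_vf(s,X,\lr(X),\hv)=0$, together with the envelope identities $D_pH=b(s,X,\lr(X),\hv)$, $D_qH=\sigma(s,X,\lr(X),\hv)$ and $D_xH=b_1^\top P+\sum_j\sigma_1^{j\top}Q^j+D_xf(s,X,\lr(X),\hv)$. Because $\hv$ minimises $L$ in $v$, the envelope theorem also gives $\frac{dH}{d\nu}=\frac{dL}{d\nu}$ evaluated at $v=\hv$ (the term from differentiating $\hv$ in $m$ vanishes thanks to \eqref{hv}); with the linearity (B1) this yields $D_y\frac{dH}{d\nu}(s,x,m,p,q)(y)=b_3^\top p+\sum_j\sigma_3^{j\top}q^j+D_y\frac{df}{d\nu}(s,x,m,\hv)(y)$, whose $b,\sigma$-parts are independent of $y$. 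Substituting these identities into the left-hand side of \eqref{monotonicity} and taking expectations, the terms carrying $b_1,\sigma_1$ cancel against those from $D_xH$ (equal scalars of opposite sign), the terms carrying $b_3,\sigma_3$ cancel against the mean-field piece of $\mathbf{F}$, and the $b_2,\sigma_2$ terms are rewritten through the first-order condition. What survives is exactly $-\langle D_XF(X',\hv')-D_XF(X,\hv),X'-X\rangle-\langle D_VF(X',\hv')-D_VF(X,\hv),\hv'-\hv\rangle$. Monotonicity of the gradient of the convex functional $F$ (obtained by adding its first-order inequality at $(X,\hv)$ and at $(X',\hv')$) then bounds this by $-2\lambda\|\hv'-\hv\|_2^2$, so Condition~\ref{Condition_mono}(i)(a) holds with $\Lambda_\beta=2\lambda$ and $\Gamma_\beta=0$.

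The remaining items follow quickly. Part (i)(b) is immediate, since $\mathbf{G}=D_XG$ is the gradient of the convex functional $G$, hence monotone, giving $\e[(\mathbf{G}(X')-\mathbf{G}(X))^\top(X'-X)]\ge 0$. For the $\beta$-Lipschitz continuity (ii) I would first note that the strong convexity in $v$ together with the Lipschitz bounds of (B2) make $\hv$ Lipschitz in $(X,P,Q)$ in the $L^2$ sense, with $W_2(\lr(X),\lr(X'))\le\|X'-X\|_2$ controlling all measure arguments; the linearity (B1) then gives the bound on $\|\mathbf{B}'-\mathbf{B}\|_2^2+\|\mathbf{A}'-\mathbf{A}\|_2^2$ in terms of $\|X'-X\|_2^2$ and $\e[|\beta'-\beta|^2]$, while the Lipschitz estimates of (B2) for $D_xf,D_vf,D_y\frac{df}{d\nu},D_xg,D_y\frac{dg}{d\nu}$ give the corresponding bound for $\mathbf{F}$ and $\mathbf{G}$. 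With Condition~\ref{Condition_mono} fully verified and $\Gamma_\beta=0$, Lemma~\ref{lem:1} yields the unique adapted solution of FBSDEs \eqref{FB:4}.

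I expect the genuinely delicate step to be the bookkeeping of the mean-field interaction terms: establishing the envelope identity for $\frac{dH}{d\nu}$, and then checking that the extra pieces $\widetilde{\e}[D_y\frac{dH}{d\nu}(\cdots)(X)]$ in $\mathbf{F}$ and $\widetilde{\e}[D_y\frac{dg}{d\nu}(\cdots)(X)]$ in $\mathbf{G}$ assemble, after the cancellations, into the honest $L^2$-gradients $D_XF$ and $D_XG$ rather than stray residual terms. The linearity (B1) is exactly what makes the $b_3,\sigma_3$ contributions $y$-independent and forces their clean cancellation against the $b_3,\sigma_3$ parts of $\mathbf{B}$ and $\mathbf{A}$; without it one would pick up additional uncontrolled measure-derivative terms. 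Everything else is a direct transcription of the convex-gradient-monotonicity argument, now carried out on the lifted functionals rather than on frozen-measure ones, which is precisely why for the MFTC problem plain convexity, rather than displacement monotonicity, suffices.
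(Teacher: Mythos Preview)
Your proposal is correct and follows essentially the same approach as the paper: both reduce the left-hand side of \eqref{monotonicity}, via the linearity in (B1) and the first-order condition \eqref{hv}, to the expression \eqref{lem:mono_mftc_1}, and then use the convexity in (B3) to obtain $\Lambda_\beta=2\lambda$, $\Gamma_\beta=0$. Your packaging of the argument through the lifted convex functionals $F(X,V)=\e[f(s,X,\lr(X),V)]$ and $G(X)=\e[g(X,\lr(X))]$ is a clean way to organise the same computation; the Fubini swap that the paper spells out explicitly is precisely what identifies the mean-field term in \eqref{lem:mono_mftc_1} with the measure part of your $D_XF$, so the ``delicate bookkeeping'' you anticipate is exactly that one line.
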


\begin{proof}
Here, we check Condition~\ref{Condition_mono}(i). From \eqref{def:BAFG'}, Assumption (B1) and the definition of $\hv(\cdot)$ in \eqref{hv}, we can write
\begin{align*}
    \mathbf{B}(s,X,P,Q)=\ & b_0(s)+b_1(s)X+b_2(s)\hv+b_3(s)\e[X],\\
	\mathbf{A}^j(s,X,P,Q)=\ & \sigma_0^j(s)+\sigma_1^j(s)X+\sigma_2^j(s)\hv+\sigma_3^j(s)\e[X],\\
	\mathbf{F}(s,X,P,Q)=\ & -b_1(s)^\top P-\sum_{j=1}^n \sigma_1^j(s)^\top Q^j-D_xf(s,X,\lr(X),\hv) \\
    &-b_3(s)^\top\e[P]-\sum_{j=1}^n \sigma_3^j(s)^\top \e\left[Q^j\right]-\widetilde{\e}\left[D_y \frac{d f}{d\nu}\left(s,\widetilde{X},\lr(X),\widetilde{\hv}\right)(X) \right],
\end{align*}
where $\hv:=\hv(s,X,\lr(X),P,Q)$ and $\widetilde{\hv}$ is an independent copies of $\hv$; the map $\hv(\cdot)$ is defined in \eqref{hv}. Then, we can compute that
\begin{align}
    &\e\bigg[\left(\mathbf{F}(s,X',P',Q')-\mathbf{F}(s,X,P,Q)\right)^\top  (X'-X)+\left(\mathbf{B}(s,X',P',Q')-\mathbf{B}(s,X,P,Q)\right)^\top  (P'-P) \notag \\
	&\quad +\sum_{j=1}^n \left(\mathbf{A}^j(s,X',P',Q')-\mathbf{A}^j(s,X,P,Q)\right)^\top  \left({Q'}^{j}-Q^j\right)\bigg] \notag \\
	\ =& -\e\Bigg\{ \Bigg[\begin{pmatrix}
	    D_x f \\ D_v f
	\end{pmatrix}(s,\cdot)\Bigg|^{\left(X',\lr(X'),\hv'\right)}_{\left(X,\lr(X),\hv\right)} \Bigg]^\top \begin{pmatrix}
	    X'-X\\ \hv'-\hv
	\end{pmatrix} \notag \\
    &\qquad +\widetilde{\e}\left[D_y \frac{d f}{d\nu}\left(s,\widetilde{X'},\lr(X'),\widetilde{\hv'}\right)(X')-D_y \frac{d f}{d\nu}\left(s,\widetilde{X},\lr(X),\widetilde{\hv}\right)(X) \right]^\top(X'-X) \Bigg\} . \label{lem:mono_mftc_1}
\end{align}
By Fubini's lemma, we know that
\begin{align*}
    &\e\left\{\widetilde{\e}\left[D_y \frac{d f}{d\nu}\left(s,\widetilde{X'},\lr(X'),\widetilde{\hv'}\right)(X')-D_y \frac{d f}{d\nu}\left(s,\widetilde{X},\lr(X),\widetilde{\hv}\right)(X) \right]^\top(X'-X) \right\}\\
    =\ & \e\left\{ \widetilde{\e}\left[\left(D_y \frac{d f}{d\nu}\left(s,{X'},\lr(X'),{\hv'}\right)\left(\widetilde{X'}\right)-D_y \frac{d f}{d\nu}\left(s,X,\lr(X),\hv\right)\left(\widetilde{X}\right) \right)^\top \left(\widetilde{X'}-\widetilde{X}\right) \right] \right\},
\end{align*}
then, from \eqref{lem:mono_mftc_1} and the convexity of $f$ in Assumption (B3), we know that Condition \ref{Condition_mono} (i)(a) is valid with the map $\beta(s,X,P,Q):=\hv(s,X,\lr(X),P,Q)$ by setting $\Lambda_\beta=2\lambda$ and $\Gamma_\beta=0$. In a similar way, we can show that Condition \ref{Condition_mono} (i)(b) is also satisfied.
\end{proof}

\section{The case for generic drift}\label{sec:generic}

So far we mainly focus on the case when $b$ is linear in $x$ and $v$, we next proceed on the more general seetting such that $b$ can be non-linear in $x$, $v$ and $m$. We study both MFG \eqref{intro_2} and the MFTC problem \eqref{intro_MFTC} with a generic $b$.

\subsection{Mean field games with generic drifts}\label{subsec:mfg_gene}

In this subsection, we shall give the solvability of MFG \eqref{intro_MFG} without Assumption (A1); instead, we adopt the following assumptions on $b$ and $\sigma$.

\textbf{(A1')} The coefficient $b$ grows linearly and is $L$-lipschitz continuous in $(x,v)$ and $l_m$-lipschitz continuous in $m$; it is continuously differentiable in $x$ and $v$, with the derivatives $D_x b$ and $D_v b$ being bounded by $L$. Moreover, for any $s\in[0,T]$, $x,x'\in\brn$,  $v,v'\in\brd$ and $m,m'\in \pr_2(\brn)$,
\begin{equation}\label{generic:condition:b}
\begin{aligned}
    &\left|(D_x b,D_v b)(s,x',m',v')- (D_x b,D_v b)(s,x,m,v)\right|\\
    \le\ & \frac{L_b^x|x'-x|+L_b^v|v'-v|+L_b^m W_2(m,m')}{1+|x|\vee |x'|+ |v|\vee |v'|+W_2(m,\delta_0)\vee W_2(m',\delta_0) };
\end{aligned}
\end{equation}
and there exists $\lambda_b >0$ such that,
\begin{align}\label{generic:condition:b'}
    (D_v b)(D_v b)^\top(s,x,m,v) \geq \lambda_b I_n,\quad \forall (s,x,v)\in [0,T]\times \brn\times \brd.
\end{align}
The coefficient $\sigma$ is linear in $x$ as $\sigma(s,x)=\sigma_0(s)+\sigma_1(s)x$, with the the norms of matrices $\sigma_0$ and $\sigma_1$ being bounded  by $L$. 

In the condition \eqref{generic:condition:b}, if the coefficient $b$ demanded is linear in $x$ and $v$, then $L_b^x=L_b^v=L_b^m=0$. Therefore, (A1') extends the linear assumption on $b$ in Assumption (A1). Our Assumption (A1') is also used in \cite{AB10',AB10''} for the solvability of first-order MFGs and MFTC problems with a generic $b$. We also refer to \cite{GW} for similar conditions to obtain the solvability of mean field game master equation with an analytical approach, but on the Hamiltonian functional $H$, not the individual assumptions on the drift functional $b$ and on the cost functionals. In view of Assumption (A1'), the system of FBSDEs associated with MFG \eqref{intro_MFG} is as follows:
\begin{equation}\label{FB:mfg_generic}
\left\{
    \begin{aligned}
        &X_s = \xi+\int_t^s b(r,X_r,\lr(X_r),v_r)dr+ \int_t^s \left[\sigma_0(r)+\sigma_1(r)X_r\right]dB_r,\\
        &P_s = -\int_s^T Q_r dB_r+D_x g(X_T,\lr(X_T))\\
        &\ \qquad +\int_s^T \bigg[ D_x b(r,X_r,\lr(X_r),v_r)^\top P_r+\sum_{j=1}^n \left(\sigma^j_1(r)\right)^\top Q_r^j + D_x f(r,X_r,\lr(X_r),v_r)\bigg]dr,
    \end{aligned}
\right.
\end{equation}
subject to
\begin{equation}\label{FB:mfg_generic_condition}
    D_v b(s,X_s,\lr(X_s),v_s)^\top P_s+ D_v f(s,X_s,\lr(X_s),v_s)=0,\quad s\in[t,T].
\end{equation}

We first give the corresponding maximum principle for MFG \eqref{intro_MFG} with Assumption (A1').

\begin{theorem}\label{lem:MP1'}
	Under Assumptions (A1'), (A2) and (A3), suppose that $f$ satisfies the convexity \eqref{strong_convex_1} with constants $\lambda_x\geq \frac{L^2L_b^x}{\lambda_b}$ and $\lambda_v>\frac{L^2L_b^v}{\lambda_b}$. If FBSDEs \eqref{FB:mfg_generic}-\eqref{FB:mfg_generic_condition} have a solution $\left(\bar{X},\bar{P},\bar{Q},\bar{v}\right)\in \sr^2_\f(t,T)\times\sr^2_{\f}(t,T)\times\left(\lr^2_{\f}(t,T)\right)^n\times\lr^2_{\f}(t,T)$, then $\bar{v}$ is the unique solution of MFG \eqref{intro_MFG}.
\end{theorem}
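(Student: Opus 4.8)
The plan is to read this as a sufficient (verification) form of the stochastic maximum principle, exactly parallel to Theorem~\ref{lem:MP1} but now accounting for the curvature of the nonlinear drift. Since the candidate equilibrium flow is frozen at $\widehat{m}_s:=\lr(\bar X_s)$ and, by construction of the solution of \eqref{FB:mfg_generic}, $\bar X$ is the state generated by $\bar v$ under precisely this flow, the fixed-point consistency of \eqref{intro_MFG} is automatic, and it suffices to show that $\bar v$ minimizes the inner cost $J(\cdot;\widehat{m}_s,t\le s\le T)$ over $\lr^2_\f(t,T)$, with strict minimality delivering uniqueness. So first I would fix an arbitrary competitor $v\in\lr^2_\f(t,T)$, let $X^v\in\sr^2_\f(t,T)$ solve the forward SDE in \eqref{FB:mfg_generic} with the \emph{same} frozen flow $\widehat{m}$ and the same initial $\xi$ (routine well-posedness under the linear growth of $b$ and the linearity of $\sigma$), and set $\delta X:=X^v-\bar X$, $\delta v:=v-\bar v$, so that $\delta X_t=0$.

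Next I would bound $J(v)-J(\bar v)$ from below. Using the convexity of $g$ in $x$ (Assumption (A3)) together with the terminal condition $\bar P_T=D_x g(\bar X_T,\widehat{m}_T)$, the terminal contribution is at least $\e[\bar P_T^\top\delta X_T]$. Applying It\^o's formula to $\bar P_s^\top\delta X_s$ and taking expectations, the martingale parts vanish and, because $\sigma(s,x)=\sigma_0(s)+\sigma_1(s)x$ is linear in $x$ and independent of $v$, the covariation term $\sum_j(\bar Q^j_s)^\top\sigma_1^j(s)\delta X_s$ cancels exactly against the matching term in the adjoint drift of \eqref{FB:mfg_generic}. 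This reduces $J(v)-J(\bar v)$ to the expectation over $[t,T]$ of the running Hamiltonian gap
\begin{align*}
& f(s,X^v_s,\widehat{m}_s,v_s) - f(s,\bar X_s,\widehat{m}_s,\bar v_s) - D_x f(s,\bar X_s,\widehat{m}_s,\bar v_s)^\top \delta X_s \\
&\quad + \bar P_s^\top\big[b(s,X^v_s,\widehat{m}_s,v_s) - b(s,\bar X_s,\widehat{m}_s,\bar v_s) - D_x b(s,\bar X_s,\widehat{m}_s,\bar v_s)\,\delta X_s\big].
\end{align*}
Invoking the strong convexity \eqref{strong_convex_1} of $f$ in $(x,v)$ absorbs the $f$-differences into $D_v f^\top\delta v_s+\lambda_x|\delta X_s|^2+\lambda_v|\delta v_s|^2$, and a first-order Taylor expansion of $b$ at $(\bar X_s,\bar v_s)$ (with $m$ frozen) splits off the leading control term $\big(D_v b(s,\bar X_s,\widehat{m}_s,\bar v_s)^\top\bar P_s+D_v f(s,\bar X_s,\widehat{m}_s,\bar v_s)\big)^\top\delta v_s$, which is exactly zero by the optimality relation \eqref{FB:mfg_generic_condition}.

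The crux, and the main obstacle, is to control the remaining second-order Taylor remainder $\bar P_s^\top R_s$, where $R_s$ collects the increments $[D_x b(\bar X_s+\theta\delta X_s,\widehat{m}_s,v_s)-D_x b(\bar X_s,\widehat{m}_s,\bar v_s)]\delta X_s$ and $[D_v b(\bar X_s,\widehat{m}_s,\bar v_s+\theta\delta v_s)-D_v b(\bar X_s,\widehat{m}_s,\bar v_s)]\delta v_s$. Two ingredients must be matched quantitatively. From the coercivity \eqref{generic:condition:b'} and the relation \eqref{FB:mfg_generic_condition}, solving $D_v b\,D_v b^\top\bar P_s=-D_v b\,D_v f$ gives the adjoint bound $|\bar P_s|\le\frac{L}{\lambda_b}\,|D_v f(s,\bar X_s,\widehat{m}_s,\bar v_s)|$, and since (A2) forces $|D_v f|$ to grow at most linearly with constant $L$, one obtains $|\bar P_s|\le\frac{L^2}{\lambda_b}\big(1+|\bar X_s|+|\bar v_s|+W_2(\widehat{m}_s,\delta_0)\big)$ up to a universal constant. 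Crucially, the normalization denominator in \eqref{generic:condition:b} is of this same linear order, so it cancels the growth of $|\bar P_s|$ and leaves $|\bar P_s^\top R_s|$ bounded by a constant multiple of $\frac{L^2}{\lambda_b}\big(L_b^x|\delta X_s|^2+L_b^v|\delta X_s||\delta v_s|+L_b^v|\delta v_s|^2\big)$. After a Young's inequality on the residual cross term, the thresholds $\lambda_x\ge\frac{L^2L_b^x}{\lambda_b}$ and $\lambda_v>\frac{L^2L_b^v}{\lambda_b}$ are exactly what make the whole integrand dominate $c\,|\delta v_s|^2$ for some $c>0$; hence $J(v)-J(\bar v)\ge c\,\e\int_t^T|\delta v_s|^2\,ds\ge0$, with equality forcing $v=\bar v$, which yields both optimality and uniqueness. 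I expect the genuinely delicate point to be this bookkeeping of the remainder against the denominator of \eqref{generic:condition:b}: one must verify that the linear growth of $|\bar P_s|$ is truly matched by that denominator, rather than merely dominated up to an unbounded factor, so that the resulting coefficients are the advertised constants $\frac{L^2L_b^x}{\lambda_b}$ and $\frac{L^2L_b^v}{\lambda_b}$.
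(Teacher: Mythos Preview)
Your proposal is correct and follows essentially the same route as the paper's proof: freeze the flow $\widehat m_s=\lr(\bar X_s)$, apply It\^o's formula to $\bar P_s^\top(X^v_s-\bar X_s)$, use the strong convexity \eqref{strong_convex_1} of $f$ together with the convexity of $g$, cancel the first-order control term via \eqref{FB:mfg_generic_condition}, and control the second-order drift remainder by combining the bound on $|\bar P_s|$ coming from \eqref{generic:condition:b'} and \eqref{FB:mfg_generic_condition} (what the paper calls the \emph{cone property}, cf.\ Remark~\ref{remark:cone}) with the structured Lipschitz estimate \eqref{generic:condition:b}, whose denominator absorbs the linear growth of $|\bar P_s|$.

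The only minor difference is bookkeeping: the paper writes the remainder estimate directly as $\frac{L^2}{\lambda_b}\big(L_b^x|\delta X_s|^2+L_b^v|\delta v_s|^2\big)$ (cf.\ \eqref{thm:ger_max_6}), whereas your decomposition produces an additional cross term $L_b^v|\delta X_s||\delta v_s|$ that you then Young away. Be aware that after Young's inequality the coefficients in front of $|\delta X_s|^2$ and $|\delta v_s|^2$ are not \emph{exactly} $\frac{L^2L_b^x}{\lambda_b}$ and $\frac{L^2L_b^v}{\lambda_b}$; the paper is tacitly loose with constants here (as it acknowledges elsewhere that ``the parameters are not the optimal''). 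This does not affect the structure of the argument, only the precise numerical thresholds.
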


\begin{proof}
We now prove the coercive property for $J$. For any control $v\in \lr_{\f}^2(t,T)$, we denote by $X^v$ the corresponding state. From Assumptions (A2) and the convexity of $f$ and $g$, we have
\begin{align}
    &J\left(v;\lr(\bar{X}_s),t\le s\le T\right)-J\left(\bar{v};\lr(\bar{X}_s),t\le s\le T\right) \notag \\
    \geq\  & \e\Bigg\{\int_t^T \bigg[ \left[\begin{pmatrix}
        D_x f\\ D_v f
    \end{pmatrix} \left(s,\bar{X}_s,\lr\left(\bar{X}_s\right),\bar{v}_s\right)\right]^\top \begin{pmatrix}
        X^v_s-\bar{X}_s\\ v_s-\bar{v}_s
    \end{pmatrix} +\lambda_x\left|X^v_s-\bar{X}_s\right|^2+ \lambda_v\left|v_s-\bar{v}_s\right|^2\bigg]ds \notag \\
    &\qquad +D_x g\left(\bar{X}_T\right)^\top \left(X^v_T-\bar{X}_T\right)\Bigg\}. \label{thm:ger_max_5}
\end{align}
By applying It\^o's formula on $\bar{P}_s^\top \left(X^v_s-\bar{X}_s\right)$ and taking expectation, we have
\begin{align}
    &\e\left[ D_x g\left(\bar{X}_T\right)^\top \left(X^v_T-\bar{X}_T\right)\right] \notag \\
    =\ & \e\Bigg[\int_t^T \bar{P}_s^\top \left[b\left(s,X^v_s,\lr\left(\bar{X}_s\right),v_s\right)-b\left(s,\bar{X}_s,\lr\left(\bar{X}_s\right),\bar{v}_s\right)\right] + \sum_{j=1}^n \left(\bar{Q}^j_s\right)^\top\sigma_1^j(s)\left(X^v_s-\bar{X}_s\right)  \notag \\
    &\quad\qquad -\bigg[ D_x b\left(s,\bar{X}_s,\lr\left(\bar{X}_s\right),\bar{v}_s\right)^\top \bar{P}_s+\sum_{j=1}^n \left(\sigma_1^j(s)\right)^\top \bar{Q}^j_s+D_x f\left(s,\bar{X}_s,\lr\left(\bar{X}_s\right),\bar{v}_s\right) \bigg]^\top \left( X^v_s-\bar{X}_s\right) ds\Bigg] \notag \\
    =\ & \e\Bigg[\int_t^T \left[b\left(s,X^v_s,\lr\left(\bar{X}_s\right),v_s\right)-b\left(s,\bar{X}_s,\lr\left(\bar{X}_s\right),\bar{v}_s\right)-D_x b\left(s,\bar{X}_s,\bar{v}_s\right) \left( X^v_s-\bar{X}_s\right)  \right]^\top \bar{P}_s \notag \\
    &\quad\qquad - D_x f\left(s,\bar{X}_s,\lr\left(\bar{X}_s\right),\bar{v}_s\right)^\top \left( X^v_s-\bar{X}_s\right) ds\Bigg]. \notag
\end{align}
From the first order condition \eqref{FB:mfg_generic_condition}, we know that 
\begin{align}\label{thm:ger_max_7}
    D_v b\left(s,\bar{X}_s,\lr\left(\bar{X}_s\right),\bar{v}_s\right)^\top \bar{P}_s+D_vf\left(s,\bar{X}_s,\lr\left(\bar{X}_s\right),\bar{v}_s\right)=0,
\end{align}
and therefore, 
\begin{align}
    &\e\left[ D_x g\left(\bar{X}_T\right)^\top \left(X^v_T-\bar{X}_T\right)\right] \notag \\
    =\ & \e\Bigg[\int_t^T \bigg[b(s, \cdot,\lr\left(\bar{X}_s\right),\cdot)\bigg|^{\left(X^v_s,v_s\right)}_{\left(\bar{X}_s,\bar{v}_s\right)} - \left[(D_x b, D_v b) \left(s,\bar{X}_s,\lr\left(\bar{X}_s\right),\bar{v}_s\right)\right] \begin{pmatrix} X^v_s-\bar{X}_s\\  v_s-\bar{v}_s\end{pmatrix}\bigg]^\top \bar{P}_s \notag \\
    &\qquad\qquad - \left[\begin{pmatrix}
        D_x f\\ D_v f
    \end{pmatrix} \left(s,\bar{X}_s,\lr\left(\bar{X}_s\right),\bar{v}_s\right)\right]^\top \begin{pmatrix} X^v_s-\bar{X}_s\\
    v_s-\bar{v}_s\end{pmatrix} ds\Bigg]. \label{thm:ger_max_4}
\end{align}
In view of \eqref{thm:ger_max_7}, the condition \eqref{generic:condition:b'} and the \textit{Schur complement}, we can compute that 
\begin{align*}
    \bar{P}_s=-\left[\left((D_vb)^\top (D_vb)\right)^{-1}(D_vb)^\top \left(s,\bar{X}_s,\lr\left(\bar{X}_s\right),\bar{v}_s\right)\right](D_v f)\left(s,\bar{X}_s,\lr\left(\bar{X}_s\right),\bar{v}_s\right),
\end{align*}
(also see the following Remark~\ref{remark:cone}), and then, from (A2), we know that
\begin{align}\label{p_norm}
    \left|\bar{P}_s\right|\le \frac{L^2}{\lambda_b}\left[1+\left|\bar{X}_s\right|+W_2\left(\lr\left(\bar{X}_s\right),\delta_0\right)+\left|\bar{v}_s\right|\right].
\end{align} 
Therefore, we deduce from the condition \eqref{generic:condition:b} that 
\begin{align}
    & \left|\e\left[\int_t^T \left[b(s, \cdot,\lr\left(\bar{X}_s\right),\cdot)\Big|^{\left(X^v_s,v_s\right)}_{\left(\bar{X}_s,\bar{v}_s\right)} -\left[(D_x b, D_v b) \left(s,\bar{X}_s,\lr\left(\bar{X}_s\right),\bar{v}_s\right)\right] \begin{pmatrix} X^v_s-\bar{X}_s\\  v_s-\bar{v}_s\end{pmatrix} \right]^\top \bar{P}_s \right]\right| \notag \\
    \le\ & \frac{L^2}{\lambda_b}\ \e\left[\int_t^T  \left(L_b^x \left|X^v_s-\bar{X}_s\right|^2 +L_b^v \left|v_s-\bar{v}_s\right|^2 \right) ds\right]. \label{thm:ger_max_6}
\end{align}
Substituting \eqref{thm:ger_max_6} into \eqref{thm:ger_max_4}, we know that
\begin{align*}
    &\e\left\{ \int_t^T \left[\begin{pmatrix}
        D_x f \\ D_v f
    \end{pmatrix}\left(s,\bar{X}_s,\lr\left(\bar{X}_s\right),\bar{v}_s\right)\right]^\top \begin{pmatrix}
        X^v_s-\bar{X}_s\\ v_s-\bar{v}_s
    \end{pmatrix} ds +D_x g\left(\bar{X}_T\right)^\top \left(X^v_T-\bar{X}_T\right)\right\}  \\
    \geq\ & -\frac{L^2}{\lambda_b}\ \e\left[\int_t^T  \left(L_b^x \left|X^v_s-\bar{X}_s\right|^2 +L_b^v \left|v_s-\bar{v}_s\right|^2 \right) ds\right].
\end{align*}
Combining the last inequality and \eqref{thm:ger_max_5}, we know that 
\begin{align*}
    J(v)-J\left(\bar{v}\right) \geq\  & \e\left\{\int_t^T \left[ \left(\lambda_x-\frac{L^2L_b^x}{\lambda_b}\right) \left|X^v_s-\bar{X}_s\right|^2+ \left(\lambda_v-\frac{L^2L_b^v}{\lambda_b}\right)\left|v_s-\bar{v}_s\right|^2\right]ds\right\}\\
    \geq\ & \left(\lambda_v-\frac{L^2L_b^v}{\lambda_b}\right) \e\left[\int_t^T \left|v_s-\bar{v}_s\right|^2ds\right],
\end{align*}
from which we obtain the claimed coerciveness.
\end{proof}

\begin{remark}\label{remark:cone}
    In the proof of Theorem~\ref{lem:MP1'}, the condition \eqref{p_norm} is also known as a cone property (cone condition) which was first proposed in \cite{AB10',AB10''} for the study on the first-order mean field theory. We would like to emphasize that the cone condition is satisfied under Assumptions (A1'), (A2) and (A3), which does not require additional assumption. We now show that Assumption (A1') and Condition~\ref{assumption_mono_1} can ensure the $\beta$-monotonicity corresponding to FBSDEs \eqref{FB:mfg_generic}, and then guarantee the global well-posedness of FBSDEs \eqref{intro_2}.
\end{remark}

\begin{theorem}\label{thm:mono_generic}
    Under Assumptions (A1'), (A2) and (A3), suppose that the terminal cost functional $g$ satisfies the displacement monotonicity condition \eqref{g_mono}, and the running cost functional $f$ satisfies Condition~\ref{assumption_mono_1}. Then, when $\lambda_v>\frac{2L^2L_b^v}{\lambda_b}$ and 
    \begin{equation}\label{thm:mono_generic:condition}
    \begin{aligned}
        2\lambda_x-\lambda_m\geq\ & L_x+\frac{L^2 (2l_m+2L_b^x+L_b^m)\lambda_b+3L^3(L_b^x+L_b^m) l_m}{\lambda_b^2}\\
        &+\frac{1}{4\lambda_v}\left(L_v+3l_x+\frac{L^2(l_m+L_b^m)\lambda_b+3L^3L_b^v l_m}{\lambda_b^2}\right)^2,
    \end{aligned}
    \end{equation}
    FBSDEs \eqref{intro_2} have a unique global solution.
\end{theorem}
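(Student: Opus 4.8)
The plan is to realize the generic-drift system \eqref{FB:mfg_generic}--\eqref{FB:mfg_generic_condition} as a special instance of the Hilbert-space FBSDEs \eqref{FB:11} and to verify Condition~\ref{Condition_mono} with the choice $\beta(s,X,P,Q):=\hv(s,X,\lr(X),P,Q)$, so that the assertion follows from Lemma~\ref{lem:1}. In analogy with \eqref{def:BAFG} I would set $\mathbf{B}(s,X,P,Q)=b(s,X,\lr(X),\hv)$, $\mathbf{A}(s,X,P,Q)=\sigma_0(s)+\sigma_1(s)X$, $\mathbf{F}(s,X,P,Q)=-\big[(D_xb(s,X,\lr(X),\hv))^\top P+\sum_{j}(\sigma_1^j(s))^\top Q^j+D_xf(s,X,\lr(X),\hv)\big]$ and $\mathbf{G}(X)=D_xg(X,\lr(X))$, where $\hv$ solves \eqref{FB:mfg_generic_condition}. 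Under (A1'),(A2),(A3) the map $\hv$ is well defined and, crucially, the cone property \eqref{p_norm} of Remark~\ref{remark:cone} holds \emph{pointwise}: it is a consequence of the first-order condition and the coercivity \eqref{generic:condition:b'}, not of being on a solution. The $\beta$-Lipschitz estimates in Condition~\ref{Condition_mono}(ii) then follow from (A1'),(A2), using \eqref{p_norm} to control the factor $P$ inside $\mathbf{F}$; and Condition~\ref{Condition_mono}(i)(b) follows from the displacement monotonicity \eqref{g_mono} of $g$ exactly as in \eqref{use_latter}. The whole content is therefore the $\beta$-monotonicity \eqref{monotonicity} with $\Gamma_\beta=0$.

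For (i)(a) I would first simplify the monotonicity functional. Writing $\hv,\hv'$ for the two minimizers and $\Delta X:=X'-X$, $\Delta P:=P'-P$, $\Delta Q^j:={Q'}^j-Q^j$, $\Delta v:=\hv'-\hv$, the terms carrying $\Delta Q^j$ that come from $\mathbf{A}$ and from the $\sum_j(\sigma_1^j)^\top Q^j$ summand of $\mathbf{F}$ cancel, since $\mathbf{A}$ depends on $X$ only through $\sigma_1$. Because \eqref{FB:mfg_generic_condition} makes $(D_vb)^\top P+D_vf$ vanish at both points, I may add the identically-zero quantity $\big[\big((D_vb')^\top P'+D_vf'\big)-\big((D_vb)^\top P+D_vf\big)\big]^\top\Delta v$ to symmetrize. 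The functional then splits as $\e[E_f+E_b]$, where $E_f=-(D_xf'-D_xf)^\top\Delta X-(D_vf'-D_vf)^\top\Delta v$ is the pure cost part, bounded exactly as in the proof of Theorem~\ref{thm:mono} (invoking Condition~\ref{assumption_mono_1}) by the right-hand side of \eqref{small_mf_eff_9}, and $E_b=(b'-b)^\top\Delta P-\big((D_xb')^\top P'-(D_xb)^\top P\big)^\top\Delta X-\big((D_vb')^\top P'-(D_vb)^\top P\big)^\top\Delta v$ collects the drift nonlinearity.

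Estimating $\e[E_b]$ is the crux. I would rearrange $E_b$ so that the pieces which survive for a \emph{linear} $b$ cancel identically (as they do in the computation leading to \eqref{compute_1}), leaving only the second-order increments $-\Delta X^\top(D_xb'-D_xb)^\top P'-\Delta v^\top(D_vb'-D_vb)^\top P'$ together with the measure-increment contribution of $b$. The essential mechanism is that the denominator $1+|X|\vee|X'|+|v|\vee|v'|+W_2(\cdot,\delta_0)\vee W_2(\cdot,\delta_0)$ in \eqref{generic:condition:b} exactly matches the linear growth $|P'|\le\tfrac{L^2}{\lambda_b}\big(1+|X'|+W_2(\lr(X'),\delta_0)+|\hv'|\big)$ from \eqref{p_norm}; the two cancel, so each increment $(D_xb'-D_xb)^\top P'$, $(D_vb'-D_vb)^\top P'$ is controlled by $\tfrac{L^2}{\lambda_b}\big(L_b^x|\Delta X|+L_b^v|\Delta v|+L_b^mW_2(\lr(X),\lr(X'))\big)$. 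The genuinely delicate terms are those carrying the $m$-dependence of $b$: one bounds $W_2(\lr(X),\lr(X'))\le\|\Delta X\|_2$ and controls $\Delta P$ through the Lipschitz dependence of the map $P=-\big((D_vb)(D_vb)^\top\big)^{-1}(D_vb)\,D_vf\,(s,X,\lr(X),\hv)$ supplied by \eqref{FB:mfg_generic_condition} and \eqref{generic:condition:b'}, whose Lipschitz constant again carries the factor $L^2/\lambda_b$; this is the origin of the cubic coefficients $3L^3(L_b^x+L_b^m)l_m/\lambda_b^2$ and $3L^3L_b^vl_m/\lambda_b^2$ in \eqref{thm:mono_generic:condition}. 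Taking expectations and applying Cauchy--Schwarz should yield
\[
\e[E_b]\le \Big(\tfrac{L^2(2l_m+2L_b^x+L_b^m)}{\lambda_b}+\tfrac{3L^3(L_b^x+L_b^m)l_m}{\lambda_b^2}\Big)\|\Delta X\|_2^2
+\Big(\tfrac{L^2(l_m+L_b^m)}{\lambda_b}+\tfrac{3L^3L_b^vl_m}{\lambda_b^2}\Big)\|\Delta X\|_2\|\Delta v\|_2
+\tfrac{2L^2L_b^v}{\lambda_b}\|\Delta v\|_2^2.
\]

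Finally I would add the bounds for $\e[E_f]$ and $\e[E_b]$, absorb the cross term $\|\Delta X\|_2\|\Delta v\|_2$ by Young's inequality splitting off $\lambda_v\|\Delta v\|_2^2$ with combined coefficient $L_v+3l_x+L^2(l_m+L_b^m)/\lambda_b+3L^3L_b^vl_m/\lambda_b^2$, and invoke \eqref{thm:mono_generic:condition} to make the coefficient of $\|\Delta X\|_2^2$ nonpositive, while the hypothesis $\lambda_v>2L^2L_b^v/\lambda_b$ leaves a strictly negative coefficient $-(\lambda_v-2L^2L_b^v/\lambda_b)$ of $\|\Delta v\|_2^2$. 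This gives \eqref{monotonicity} with $\Gamma_\beta=0$ and $\Lambda_\beta=\lambda_v-2L^2L_b^v/\lambda_b>0$, so Condition~\ref{Condition_mono} holds and Lemma~\ref{lem:1}, in its $\Gamma_\beta=0$ form, delivers the unique global solution of \eqref{intro_2}. I expect the main obstacle to be precisely the bookkeeping in the $E_b$ estimate, namely exploiting the matching between the denominator in \eqref{generic:condition:b} and the cone bound \eqref{p_norm}, and controlling the $\Delta P$-terms generated by the measure-dependence of $b$.
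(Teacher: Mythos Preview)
Your proposal is correct and follows essentially the same route as the paper: you cast \eqref{FB:mfg_generic}--\eqref{FB:mfg_generic_condition} as an instance of \eqref{FB:11}, split the monotonicity functional into an $f$-part (handled exactly as in Theorem~\ref{thm:mono}) and a $b$-nonlinearity part, control the latter via the cone property \eqref{p_norm} together with the Lipschitz dependence of $P$ on $(X,\lr(X),V)$ coming from the first-order condition, and close with Young's inequality to obtain $\Gamma_\beta=0$ and $\Lambda_\beta=\lambda_v-2L^2L_b^v/\lambda_b$. The paper's rearrangement of the $E_b$-term (its display \eqref{thm_generic_1}) is more symmetric---it isolates two Taylor remainders paired with $P$ and $P'$ plus a mixed second-difference in $m$---but the ensuing estimates \eqref{thm_generic_7}--\eqref{thm_generic_2} produce exactly the coefficients you wrote, so the substance is the same.
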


\begin{proof}
We shall show that the coefficients of FBSDEs \eqref{FB:mfg_generic} satisfy Condition~\ref{Condition_mono}. For $s\in[t,T]$, $X,X',P,P'\in L^2\left(\Omega,\f,\mathbb{P};\brn\right)$, $Q,Q'\in L^2\left(\Omega,\f,\mathbb{P};\br^{n\times n}\right)$ and $V,V'\in L^2\left(\Omega,\f,\mathbb{P};\brd\right)$ respectively satisfying 
\begin{equation}\label{thm_generic_5}
\begin{aligned}
    &D_v b(s,X,\lr(X),V)^\top P+ D_v f(s,X,\lr(X),V)=0,\\
    &D_v b(s,X',\lr(X'),V')^\top P+ D_v f(s,X',\lr(X'),V')=0,
\end{aligned}
\end{equation}
we can compute that
\begin{align}
	&\e\bigg\{\left(b(s,X',\lr(X'),V')-b(s,X,\lr(X),V)\right)^\top  (P'-P)+\sum_{j=1}^n \left(\sigma^j_1(s)(X'-X)\right)^\top  \left({Q'}^{j}-Q^j\right) \notag \\
    &\quad - \bigg[D_x b(s,X',\lr(X'),V')^\top P'+\sum_{j=1}^n \left(\sigma^j_1(s)\right)^\top {Q'}^j + D_x f(s,X',\lr(X'),V') \notag \\
    &\quad\qquad -D_x b(s,X,\lr(X),V)^\top P+\sum_{j=1}^n \left(\sigma^j_1(s)\right)^\top Q^j + D_x f(s,X,\lr(X),V)\bigg]^\top (X'-X)\bigg\} \notag \\
    =\ & \e\bigg\{-\left[b(s,X,\lr(X),V)-b(s,X',\lr(X'),V') -D_x b(s,X',\lr(X'),V') (X-X')\right]^\top  (P'-P)\notag \\
    &\ \quad - \left[ \left(D_x b(s,X',\lr(X'),V') -D_x b(s,X,\lr(X),V)\right)(X'-X) \right]^\top P  \notag \\
    &\ \quad - \left[D_xf(s,X',\lr(X'),V')-D_xf(s,X,\lr(X),V)\right]^\top  (X'-X) \bigg\} \notag\\
    =\ & \e\Bigg\{ \left[b(s,X,\lr(X'),V)-b(s,X,\lr(X),V)\right]^\top (P'-P) \notag \\
    &\ \quad +\left[b(s,X',\lr(X),V')+b(s,X,\lr(X'),V)-b(s,X,\lr(X),V)-b(s,X',\lr(X'),V')\right]^\top P \notag \\
    &\ \quad -\bigg[b(s,\cdot,\lr(X'),\cdot)\bigg|^{\left(X,V\right)}_{\left(X',V'\right)} - \left[(D_x b, D_v b)\left(s,X',\lr(X'),V'\right)\right] \begin{pmatrix}
        X-X'\\ V-V'
    \end{pmatrix} \bigg]^\top P' \notag \\
    &\ \quad -\bigg[b(s,\cdot,\lr(X),\cdot)\bigg|^{\left(X',V'\right)}_{\left(X,V\right)} - \left[(D_x b, D_v b)\left(s,X,\lr(X),V\right)\right] \begin{pmatrix}
        X'-X\\ V'-V
    \end{pmatrix} \bigg]^\top P \notag \\
    &\ \quad - \bigg[\begin{pmatrix}
        D_x f\\ D_v f
    \end{pmatrix}(s,\cdot)\bigg|^{\left(X',\lr(X')),V'\right)}_{\left(X,\lr(X)),V\right)} \bigg]^\top \begin{pmatrix}
        X'-X\\ V'-V
    \end{pmatrix} \Bigg\}. \label{thm_generic_1}
\end{align}    
From \eqref{thm_generic_5} and Assumptions (A1') and (A2), we can compute that 
\begin{align}
    &|P'-P|  \notag \\
    =\ & \Bigg|\left[\left((D_vb)^\top (D_vb)\right)^{-1}(D_vb)^\top \left(s,X',\lr(X'),V'\right)\right](D_v f)(s,\cdot)\Big|^{(X',\lr(X'),V')}_{(X,\lr(X),V)} \notag \\
    &+\left\{\left[\left((D_vb)^\top (D_vb)\right)^{-1}(D_vb)^\top\right] \left(s,\cdot\right)\bigg|^{(X',\lr(X'),V')}_{(X,\lr(X),V)} \right\}(D_v f)\left(s,X,\lr\left(X\right),V\right)\Bigg| \notag \\
    \le\ & \frac{L}{\lambda_b} \left|(D_v f)(s,\cdot)\Big|^{(X',\lr(X'),V')}_{(X,\lr(X),V)}\right|+\frac{3L^2}{\lambda_b^2} \left|(D_v b)(s,\cdot)\Big|^{(X',\lr(X'),V')}_{(X,\lr(X),V)}\right| \cdot \left|(D_v f)\left(s,X,\lr\left(X\right),V\right)\right| \notag \\
    \le\ & \left(\frac{L^2}{\lambda_b}+\frac{3L^3L_b^x}{\lambda_b^2}\right) |X'-X|+ \left(\frac{L^2}{\lambda_b}+\frac{3L^3L_b^v}{\lambda_b^2}\right) |V'-V|+ \left(\frac{L^2}{\lambda_b}+\frac{3L^3L_b^m}{\lambda_b^2}\right) W_2(\lr(X),\lr(X')), \notag
\end{align}
and therefore, 
\begin{align}
    &\e\left\{ \left[b(s,X,\lr(X'),V)-b(s,X,\lr(X),V)\right]^\top (P'-P)\right\} \notag \\
    \le\ & \left(\frac{2L^2 l_m}{\lambda_b}+\frac{3L^3(L_b^x+L_b^m) l_m}{\lambda_b^2}\right) \|X'-X\|_2^2 + \left(\frac{L^2l_m}{\lambda_b}+\frac{3L^3L_b^v l_m}{\lambda_b^2}\right) \|V'-V\|_2 \cdot \|X'-X\|_2. \label{thm_generic_7}
\end{align}
Similarly, from \eqref{thm_generic_5}, the condition \eqref{generic:condition:b'} and Assumption (A2), we also have the cone properties
\begin{align*}
    |P|\le \frac{L^2}{\lambda_b}\left(1+|X|+W_2(\lr(X),\delta_0)+\left|V\right|\right),\quad |P'|\le \frac{L^2}{\lambda_b}\left(1+|X'|+W_2(\lr(X'),\delta_0)+\left|V'\right|\right),
\end{align*}
and therefore, from the condition \eqref{generic:condition:b}, we know that
\begin{align}
    & \e\Bigg\{ \left[b(s,X',\lr(X),V')+b(s,X,\lr(X'),V)-b(s,X,\lr(X),V)-b(s,X',\lr(X'),V')\right]^\top P \notag \\
    &\ \quad -\left[b(s,\cdot,\lr(X'),\cdot)\Big|^{\left(X,V\right)}_{\left(X',V'\right)} - \left[(D_x b, D_v b)\left(s,X',\lr(X'),V'\right)\right] \begin{pmatrix}
        X-X'\\ V-V'
    \end{pmatrix} \right]^\top P' \notag \\
    &\ \quad -\left[b(s,\cdot,\lr(X),\cdot)\Big|^{\left(X',V'\right)}_{\left(X,V\right)} - \left[(D_x b, D_v b)\left(s,X,\lr(X),V\right)\right] \begin{pmatrix}
        X'-X\\ V'-V
    \end{pmatrix} \right]^\top P  \Bigg\} \notag\\
    \le\ & \frac{L^2 L_b^m}{\lambda_b} \left(\|X'-X\|_2^2+\|V'-V\|_2\cdot \|X'-X\|_2\right)+\frac{2 L^2L_b^x}{\lambda_b}\|X'-X\|_2^2+\frac{2 L^2L_b^v}{\lambda_b}\|V'-V\|_2^2. \label{thm_generic_2}
\end{align}
Since $f$ satisfies Condition~\ref{assumption_mono_1}, from the proof of Theorem~\ref{thm:mono} (also referring to \eqref{small_mf_eff_3'}, \eqref{small_mf_eff_8} and \eqref{small_mf_eff_9}), we know that
\begin{align}
    & \e\Bigg\{ - \Bigg[\begin{pmatrix}
        D_x f\\ D_v f
    \end{pmatrix}(s,\cdot)\Bigg|^{\left(X',\lr(X')),V'\right)}_{\left(X,\lr(X)),V\right)} \Bigg]^\top \begin{pmatrix}
        X'-X\\ V'-V
    \end{pmatrix}  \Bigg\} \notag\\
    \le\ & -2\lambda_v \left\| \hv'-\hv\right\|_2^2 -(2\lambda_x-\lambda_m-L_x) \left\| X'-X \right\|_2^2 +(L_v+3l_x) \left\|X'-X\right\|_2 \cdot \left\|V'-V\right\|_2 .\label{thm_generic_4}
\end{align}
Substituting \eqref{thm_generic_7}-\eqref{thm_generic_4} back into \eqref{thm_generic_1}, from the Young's inequality and Condition \eqref{thm:mono_generic}, we know that 
\begin{align*}
    &\textit{The left hand side of \eqref{thm_generic_1}}\\
    \le\ & -\left(2\lambda_v-\frac{2 L^2L_b^v}{\lambda_b} \right)\left\| V'-V\right\|_2^2+\left(L_v+3l_x+\frac{L^2(l_m+L_b^m)}{\lambda_b}+\frac{3L^3L_b^v l_m}{\lambda_b^2}\right) \left\|X'-X\right\|_2 \cdot \left\|V'-V\right\|_2 \\
    &-\left(2\lambda_x-\lambda_m-L_x-\frac{L^2 (2l_m+2L_b^x+L_b^m)}{\lambda_b}-\frac{3L^3(L_b^x+L_b^m) l_m}{\lambda_b^2} \right) \left\| X'-X \right\|_2^2 \\
    \le\ & -\left(\lambda_v-\frac{2L^2L_b^v}{\lambda_b}\right) \|V'-V\|_2^2,
\end{align*}  
from which we know that Condition \ref{Condition_mono} (i)(a) is valid with $\Lambda_\beta=\lambda_v-\frac{2L^2L_b^v}{\lambda_b}$ and $\Gamma_\beta=0$. Since $g$ satisfies the displacement monotonicity condition, from \eqref{use_latter}, we know that Condition \ref{Condition_mono} (i)(b) is satisfied. As a consequence of Lemma~\ref{lem:1}, we obtain the well-posedness of FBSDEs \eqref{FB:mfg_generic}.
\end{proof}

\begin{remark}
    In the relation \eqref{thm:mono_generic:condition}, the parameters are not the optimal, but we do not drill down into the details in this article. The conditions $\lambda_v>\frac{2L^2L_b^v}{\lambda_b}$ and \eqref{thm:mono_generic} in Theorem~\ref{thm:mono_generic} seem to be restrictive, however, when the coefficient $b$ is linear in $x$ and $v$ and does not functionally depend on $m$, then $L_b^x=L_b^v=L_b^m=l_m=0$, and the conditions above are reduced to that in Condition~\ref{assumption_mono_1}.
\end{remark}

As a direct consequence of Theorems~\ref{lem:MP1'} and \ref{thm:mono_generic}, we obtain the following solvability of MFG \eqref{intro_MFG} with the generic $b$.

\begin{corollary}
    Under assumptions in Theorem~\ref{thm:mono_generic}, MFG \eqref{intro_MFG} has a unique global solution.
\end{corollary}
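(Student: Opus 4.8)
The plan is to read the corollary as the composition of the two preceding results for the generic-drift MFG: Theorem~\ref{thm:mono_generic}, which supplies the unique global solution of the forward-backward system, and Theorem~\ref{lem:MP1'}, whose sufficiency part turns that solution into the (unique) equilibrium control. So the proof amounts to checking that the hypotheses of Theorem~\ref{lem:MP1'} are all secured by those of Theorem~\ref{thm:mono_generic}, after which the two statements are invoked in turn.

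Concretely, I would first apply Theorem~\ref{thm:mono_generic}: under (A1'), (A2), (A3), the displacement monotonicity \eqref{g_mono} of $g$, Condition~\ref{assumption_mono_1} on $f$, and the bounds $\lambda_v>\frac{2L^2L_b^v}{\lambda_b}$ and \eqref{thm:mono_generic:condition}, the FBSDEs \eqref{FB:mfg_generic}--\eqref{FB:mfg_generic_condition} (equivalently \eqref{intro_2}) admit a unique global adapted solution $(\bar X,\bar P,\bar Q,\bar v)$. I would then verify the thresholds required by Theorem~\ref{lem:MP1'}: the bound $\lambda_v>\frac{L^2L_b^v}{\lambda_b}$ holds a fortiori, since Theorem~\ref{thm:mono_generic} already assumes the strictly stronger $\lambda_v>\frac{2L^2L_b^v}{\lambda_b}$; and the bound $\lambda_x\geq\frac{L^2L_b^x}{\lambda_b}$ should be extracted from \eqref{thm:mono_generic:condition}, whose right-hand side contains the summand $\frac{L^2(2l_m+2L_b^x+L_b^m)\lambda_b+\cdots}{\lambda_b^2}\geq\frac{2L^2L_b^x}{\lambda_b}$. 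With the hypotheses in hand, Theorem~\ref{lem:MP1'} gives that $\bar v_s=\hv(s,\bar X_s,\lr(\bar X_s),\bar P_s,\bar Q_s)$ is the unique solution of MFG \eqref{intro_MFG}, which is precisely the claim.

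The genuinely delicate step --- and the one I expect to be the real obstacle --- is reconciling the \emph{convexity} demanded by the maximum principle with the weaker monotonicity actually assumed. Theorem~\ref{lem:MP1'} requires the full running cost $f$ to be strongly convex in $(x,v)$ in the sense of \eqref{strong_convex_1} with the above constants (this is what yields the coercivity estimate \eqref{thm:ger_max_5} for the frozen equilibrium measure), whereas Condition~\ref{assumption_mono_1} only makes $f_1$ strongly convex while $f_0$ is merely convex in $v$ and displacement quasi-monotone, and moreover the quasi-monotonicity parameter $\lambda_m$ may be negative, so that the reading of $\lambda_x\geq\frac{L^2L_b^x}{\lambda_b}$ from \eqref{thm:mono_generic:condition} is immediate only when $\lambda_m\geq 0$. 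One must therefore confirm that the standing joint convexity (A3)(ii), the strong convexity of $f_1$, and the budget in \eqref{thm:mono_generic:condition} together push the effective $x$-convexity of $f$ past the drift-induced threshold. If this parameter matching is not transparent, the clean alternative is to discard the black-box use of Theorem~\ref{lem:MP1'} and re-run its coercivity argument directly from the $\beta$-monotonicity inequality already proved inside Theorem~\ref{thm:mono_generic} (the chain producing $\Lambda_\beta=\lambda_v-\frac{2L^2L_b^v}{\lambda_b}$ with $\Gamma_\beta=0$), which controls $\e\int_t^T|v_s-\bar v_s|^2\,ds$ and hence delivers the strict minimality of $\bar v$ and uniqueness of the equilibrium.
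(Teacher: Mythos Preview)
Your plan --- apply Theorem~\ref{thm:mono_generic} for the FBSDE and then Theorem~\ref{lem:MP1'} for the sufficiency --- is exactly the paper's argument; the paper simply declares the corollary ``a direct consequence of Theorems~\ref{lem:MP1'} and \ref{thm:mono_generic}'' without any further detail. The parameter-matching concern you raise is legitimate and is \emph{not} addressed by the paper: Theorem~\ref{lem:MP1'} demands that the \emph{full} running cost $f$ satisfy \eqref{strong_convex_1} with $\lambda_x\geq\frac{L^2L_b^x}{\lambda_b}$, whereas Condition~\ref{assumption_mono_1} imposes this only on the piece $f_1$; Assumption (A3)(ii) gives joint convexity of $f$ in $(x,v)$ but with zero $x$-modulus, so when $L_b^x>0$ the hypothesis of Theorem~\ref{lem:MP1'} is not automatically secured by those of Theorem~\ref{thm:mono_generic}. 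Your attempt to extract $\lambda_x\geq\frac{L^2L_b^x}{\lambda_b}$ from \eqref{thm:mono_generic:condition} only bounds the $\lambda_x$ attached to $f_1$, not the one for $f$, and even that only when $\lambda_m\geq 0$.

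Your proposed alternative --- replacing the black-box use of Theorem~\ref{lem:MP1'} by the $\beta$-monotonicity inequality established inside Theorem~\ref{thm:mono_generic} --- does not close the gap. That inequality (culminating in $\Lambda_\beta=\lambda_v-\frac{2L^2L_b^v}{\lambda_b}$, $\Gamma_\beta=0$) compares two quadruples $(X,P,Q,V)$ and $(X',P',Q',V')$ \emph{both} satisfying the first-order condition \eqref{thm_generic_5}, with the measure \emph{coupled} to the state on each side. The sufficiency step in Theorem~\ref{lem:MP1'} instead compares $\bar v$ against an \emph{arbitrary} admissible control $v$ (not satisfying any first-order condition) while the measure is \emph{frozen} at $\lr(\bar X)$. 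These are structurally different inequalities, and the coupled $\beta$-monotonicity does not by itself yield coercivity of the individual player's frozen-measure cost $J(\,\cdot\,;\lr(\bar X))$. As written, the corollary seems to require either adding to Theorem~\ref{thm:mono_generic} the strong-$x$-convexity hypothesis of Theorem~\ref{lem:MP1'} on the full $f$, or restricting to $L_b^x=0$.
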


\subsection{Relation with the mean field game HJB-FP system}

MFG \eqref{intro_MFG} is associated with the following HJB-FP system:
\begin{equation}\label{HJB-FP}
	\left\{
	\begin{aligned}
        &\dd_s v^{t,\mu}(s,x)+H\left(s,x,m^{t,\mu}(s),D_x v^{t,\mu}(s,x),\frac{1}{2}D_x^2 v^{t,\mu}(s,x)\sigma\left(s,x,m^{t,\mu}(s)\right)\right)=0,\quad s\in[t,T),\\
        &\dd_s m^{t,\mu}+ {div}\left[ D_p H \left(s,x,m^{t,\mu}(s),D_x v^{t,\mu}(s,x)\right) m^{t,\mu}\right]\\
		&\:\:\ \qquad -\sum_{i,j=1}^n \dd_{x_i}\dd_{x_j} \left[a_{ij}\left(s,x,m^{t,\mu}(s)\right)m^{t,\mu} \right]=0,\quad s\in(t,T],\\
	    &m^{t,\mu}(t)=\mu,\quad v^{t,\mu}(T,x)=g\left(x,m^{t,\mu}(T)\right),
	\end{aligned}
	\right.
\end{equation}
where $a_{ij}(s,x,m):=\frac{1}{2}\left(\sigma\sigma^\top\right)_{ij}(s,x,m)$. This HJB-FP system is related to our FBSDEs \eqref{intro_2} in the following manner: suppose that $\left(X^{t,\xi},P^{t,\xi},Q^{t,\xi}\right)$ is an adapted solution of FBSDEs \eqref{intro_2} with initial condition $X_t^{t,\xi}=\xi\sim \mu$, then,
\begin{align}\label{relation:FP-HJB-FBSDE}
    m^{t,\mu}(s)=\lr\left(X^{t,\xi}_s\right), \quad D_x v^{t,\mu}\left(s,X^{t,\xi}_s\right)=P^{t,\xi}_s,\quad s\in[t,T].
\end{align}
This relation \eqref{relation:FP-HJB-FBSDE} also gives a decoupling field of FBSDEs \eqref{intro_2}. The HJB-FP system \eqref{HJB-FP} is widely studied in the contemporary literature; see \cite{CP,CDLL,GDA,JM1,JM2,JM3} for instance. In such works,  $\sigma$ is usually assumed to be constant or non-degenerate, and System \eqref{HJB-FP} can be studied via the solution a parabolic equation. For the existence result for System \eqref{HJB-FP} and also the solution regularity, the regularity of  the  functional Hamiltonian $H(s,x,m,p,q)$ is directly assumed. For the uniqueness result for System \eqref{HJB-FP}, some monotonicity conditions are required for $H$ and $g$. The  Lasry-Lions monotonicity condition:
\begin{align*}
    \int_{\brn} \left[g(x,m')-g(x,m) \right] (m'-m)(dx)\geq 0,
\end{align*}
which is introduced by Lasry and Lions \cite{JM1,JM2} and then used in the literature \cite{CDLL,GDA}, is now well-known. Its relation with the displacement monotonicity condition is referred  to \cite{SA,AB9',GW,GM}.

In our work, to see the matter from the control perspective more clearly, we give conditions directly on coefficients $b$, $\sigma$ and $f$, rather than on the induced functional  Hamiltonian $H$ as in the above mentioned literature studying the HJB-FP system \eqref{HJB-FP} via analytical methods. However, we can show that our assumptions on $b$, $\sigma$, $f$ for the control method are related to those on $H$ for the analytical method. For example, we show that the condition $\lambda_v>\frac{2L^2L_b^v}{\lambda_b}$ in Theorem~\ref{thm:mono_generic} can guarantee the concavity for the Hamiltonian functional $H$ in $p$; this concavity assumption is widely used in the existing literature, such as \cite{CP,CDLL,GW,GDA,HZ2}. For convenience, we here prove the particular case when the dimension $n=d=1$ and the coefficients $b$ and $f$ are twice differentiable in $v$, although we do not need coefficients to be twice differentiable just for the sake of the well-posedness  of the MFGs and MFTC problem. 
We have
\begin{align}
    D_{p}^2 H(s,x,m,p,q)=\ & D_p \left[b(s,x,m,\hv(s,x,m,p))\right] \notag \\
    =\ & D_v b(s,x,m,\hv(s,x,m,p)) D_p\hv(s,x,m,p). \label{rk_non_convex_H_1}
\end{align}
From the first order optimality condition, we know that
\begin{align*}
    D_v b(s,x,m,\hv(s,x,m,p)) \ p+ D_v f(s,x,m,\hv(s,x,m,p)) =0,
\end{align*}
and by differentiating $p$ in this last equation, we have
\begin{align*}
    & \left[D_v^2  b(s,x,m,\hv(s,x,m,p)) \ p+ D_v^2  f(s,x,m,\hv(s,x,m,p)) \right] D_p \hv (s,x,m,p)\\
    =\ & -D_v b(s,x,m,\hv(s,x,m,p)). 
\end{align*}
Then, in view of the condition \eqref{generic:condition:b'}, we know that
\begin{align*}
    D_p \hv (s,x,m,p)=\frac{|D_v b|^2}{(D_v^2  b)(D_v f)- (D_v^2  f) (D_vb)}(s,x,m,\hv(s,x,m,p)),
\end{align*}
we shall explain its well-posedness without exploding to infinity in the following. Substituting this last expression into \eqref{rk_non_convex_H_1}, we can compute that
\begin{align*}
    D_{p}^2 H(s,x,m,p)=&\ \frac{|D_v b|^2}{\frac{(D_v b)(D_v^2 b)(D_v f)}{|D_v b|^2} - D_v^2  f}(s,x,m,\hv(s,x,m,p)).
\end{align*}
From Assmption (A1'), we know that $\left|\frac{(D_v b)(D_v^2  b)(D_v f)}{|D_v b|^2}\right|\le \frac{L^2 L_b^v }{\lambda_b}$, and together with the convexity of $f$ in $v$, the inequality $\lambda_v>\frac{2L^2L_b^v}{\lambda_b}$ yields that $D_{p}^2 H(s,x,m,p,q)<0$, which implies that $H$ is concave in $p$. 

In the HJB-FP system \eqref{HJB-FP}, if we define 
\begin{align*}
	U(t,x,\mu):=v^{t,\mu}(t,x),\quad (t,x,\mu)\in[0,T]\times\br^n\times\pr_2(\brn),
\end{align*}
then, from System \eqref{HJB-FP}, we can immediately see that it satisfies the following MFG master equation:
\begin{equation}\label{master}
	\left\{
	\begin{aligned}
		&\dd_t U(t,x,\mu)+H\left(t,x,\mu,D_x U(t,x,\mu),\frac{1}{2}D_x^2 U(t,x,\mu)\sigma(t,x,\mu)\right)\\
		&\quad\qquad\qquad +\int_\brn \bigg[D_p H \left(t,y,\mu,D_x U(t,y,\mu)\right)^\top D_y\frac{dU}{d\nu}(t,x,\mu)({y})\\
		&\qquad\qquad\qquad\qquad +\frac{1}{2}\text{Tr}\left(\left(\sigma\sigma^\top\right) (t,{y},\mu) D_y^2\frac{dU}{d\nu}(t,x,\mu)({y})\right)\bigg] d\mu(y)=0, \quad t\in[0,T),\\
		&U(T,x,\mu)=g(x,\mu),\quad \forall(x,\mu)\in\brn\times\pr_2(\brn).
	\end{aligned}	
	\right.
\end{equation}
The solution of the master equation \eqref{master} is a decoupling field of the mean field game HJB-FP system \eqref{HJB-FP}:
\begin{align*}
	v^{t,\mu}(s,x):=U\left(s,x,m^{t,\mu}(s)\right),\ (s,x)\in[t,T]\times\brn;
\end{align*}
we also refer to \cite{AB11,AB9',HZ2} for more discussions on the relations between the mean field game master equation and the HJB-FP system. The master equation \eqref{master} is widely studied in the existing literature, see \cite{GW,Anti_mono} for an analytical method for well-posedness when $\sigma$ is constant, and also see our previous work \cite{AB11} for a probabilistic approach when $b$ is linear. From \cite{AB11,AB9'}, we see that classical solution of the master equation \eqref{master} or the HJB-FP equations \eqref{HJB-FP} requires more restrictive assumptions and higher regularity on coefficients than just solution of the MFG \eqref{intro_MFG}. Therefore, the stochastic control method in the paper allows us to include more cases, and also to impose less regularity condition on the coefficients. So as to guarantee the solvability of the original mean field problem.

\subsection{Mean field type control problems with generic drifts}

We next study the solvability of the MFTC problem \eqref{intro_MFTC} without Assumption (B1); instead, we take the following assumptions on $b$ and $\sigma$:

\textbf{(B1')} The coefficient $b$ grows linearly and is $L$-Lipschitz continuous in $(x,v,m)$; it is continuously differentiable in $x$, $v$ and $m$, with the derivatives $D_x b$, $D_x b$ and $D_y\frac{db}{d\nu}$ being bounded by $L$, and they are $L$-Lipschitz continuous in $(x,v,m)$. Moreover, for any $s\in[0,T]$, $x,x'\in\brn$, $v,v'\in\brd$ and square-integrable random variables $\xi$ and $\xi'$ on the same probability space,
\begin{align*}
    &\left|b(s,\cdot)\Big|^{(x', \lr(\xi'), v')}_{(x, \lr(\xi), v)} -\left[(D_x, D_v) b(s,x,\lr(\xi),v)\right] 
    \begin{pmatrix}  x'-x\\ v'-v\end {pmatrix}
      -\e\left[D_y\frac{db}{d\nu}(s,x,\lr(\xi),v)(\xi)(\xi'-\xi)\right]\right|\\[2mm]
    \le\ & \frac{L_b^x|x'-x|^2+L_b^v|v'-v|^2+L_b^m \|\xi'-\xi\|_2^2}{1+ |x|\vee |x'|+ |v|\vee |v'|+\|\xi\|_2\vee \|\xi'\|_2},
\end{align*}
and there exists $\lambda_b >0$, such that
\begin{align*}
    (D_v b)(D_v b)^\top(s,x,m,v)\ \geq\  \lambda_b I_n, \quad \forall (s,x,v)\in [0,T]\times \brn\times \brd.
\end{align*}
The coefficient $\sigma$ is linear in $(x,v)$ as $\sigma(s,x,m)=\sigma_0(s)+\sigma_1(s)x+\sigma_3(s)\int_\brn ym(dy)$, with the norms of matrices $\sigma_0$, $\sigma_1$ and $\sigma_3)$ being bounded  by $L$. 

\begin{remark}
    Assumption (B1') is different from Assumption (A1'). This is because the state process of the MFTC problem \eqref{intro_MFTC} depends simultaneously on the distribution of the current controlled state, while the state process of MFG \eqref{intro_MFG} depends on the the population equilibrium distribution. In Assumption (B1'), when the coefficient $b$ is linear in $(x, m, v)$, then $L_b^x=L_b^v=L_b^m=0$. Therefore, (B1') is a nonlinear  extension of  the linear assumption on $b$ in Assumption (B1).
\end{remark}

We also need the following strong convexity on $f$: 

\textbf{(B3')} The functional $g$ satisfies the convexity \eqref{convex_g_mftc}, and there exist $\lambda_v> 0$ and $\lambda_x,\lambda_m\geq 0$, such that for any square-integrable random variables $\xi$ and $\xi'$ on the same probability space,
\begin{align}
	&f\left(s,x',\lr(\xi'),v'\right)-f(s,x,\lr(\xi),v) \notag \\
    \geq \ & \left[\begin{pmatrix}
        D_x f \\ D_v f
    \end{pmatrix}(s,x,\lr(\xi),v) \right]^\top \begin{pmatrix}
        x'-x\\ v'-v
    \end{pmatrix} +\e\left[\left(D_y\frac{d f}{d\nu}(s,x,\lr(\xi),v)(\xi)\right)^\top \left(\xi'-\xi\right)\right] \notag \\
	&+\lambda_v \left|v'-v\right|^2+\lambda_x|x'-x|^2+\lambda_m \|\xi'-\xi\|_2^2 . \label{convex:mgtc:generic}
\end{align}

In view of Assumption (B1'), the system of FBSDEs associated with the MFTC problem \eqref{intro_MFTC} are as follows:
\small
\begin{equation}\label{FB:mftc_generic}
\left\{
    \begin{aligned}
        &X_s = \xi+\int_t^s b(r,X_r,\lr(X_r),v_r)dr+ \int_t^s \left\{\sigma_0(r)+\sigma_1(r)X_r+\sigma_3(r)\e[X_r]\right\}dB_r,\\
        &P_s = -\int_s^T Q_r dB_r+D_x g(X_T,\lr(X_T))D_xg\left(X_T,\lr(X_T)\right) +\widetilde{\e}\left[D_y\frac{d g}{d\nu}\left(\widetilde{X_T},\lr(X_T)\right)(X_T)\right]\\
        &\ \qquad +\int_s^T \bigg\{D_x b(r,X_r,\lr(X_r),v_r)^\top P_r+\sum_{j=1}^n \left(\sigma^j_1(r)\right)^\top Q_r^j + D_x f(r,X_r,\lr(X_r),v_r)\\
        &\qquad\qquad\qquad + \widetilde{\e}\left[\widetilde{{P}_r}^\top\left(D_y \frac{d b}{d\nu}\left(r,\widetilde{{X}_r},\lr(X_r),\widetilde{v_r}\right)\left(X_r\right) \right) \right]+\sum_{j=1}^n \left(\sigma_3^j(r)\right)^\top \e\left[Q^j_r\right]\\
        &\qquad\qquad\qquad +\widetilde{\e}\left[D_y \frac{d f}{d\nu}\left(r,\widetilde{X_r},\lr(X_r),\widetilde{v_r}\right)(X_r) \right]\bigg\}dr,\quad s\in[t,T],
    \end{aligned}
\right.
\end{equation}
\normalsize
with the condition
\begin{equation}\label{FB:mftc_generic_condition}
    D_v b(s,X_s,\lr(X_s),v_s)^\top P_s+ D_v f(s,X_s,\lr(X_s),v_s)=0,\quad s\in[t,T].
\end{equation}
We now give the corresponding maximum principle for MFTC \eqref{intro_MFTC} with a generic $b$.

\begin{theorem}\label{lem:MP2'}
	Under Assumptions (B1'), (B2) and (B3'), suppose that $\lambda_x+\lambda_m\geq\frac{L^2(L_b^x+L_b^m)}{\lambda_b}$ and $\lambda_v>\frac{L^2 L_b^v}{\lambda_b}$. If FBSDEs \eqref{FB:mftc_generic}-\eqref{FB:mftc_generic_condition} have a solution $\left(\bar{X},\bar{P},\bar{Q},\bar{v}\right)\in \sr^2_\f(t,T)\times\sr^2_{\f}(t,T)\times\left(\lr^2_{\f}(t,T)\right)^n\times \lr^2_{\f}(t,T)$, then, $\bar{v}$ is the unique optimal control of the MFTC problem \eqref{intro_MFTC}.
\end{theorem}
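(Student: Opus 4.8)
The plan is to prove the strict coerciveness of the cost functional $J$ around the candidate $\bar{v}$: for every admissible $v\in\lr^2_\f(t,T)$ with corresponding state $X^v$, I aim to show
\begin{align*}
    J(v)-J\left(\bar{v}\right)\geq \left(\lambda_v-\frac{L^2 L_b^v}{\lambda_b}\right) \e\int_t^T \left|v_s-\bar{v}_s\right|^2 ds,
\end{align*}
which is strictly positive whenever $v\neq \bar{v}$ and hence yields optimality and uniqueness simultaneously. Writing $\delta X_s:=X^v_s-\bar{X}_s$ and $\delta v_s:=v_s-\bar{v}_s$ (so $\delta X_t=0$), this is the McKean--Vlasov counterpart of the coercivity argument in Theorem~\ref{lem:MP1'}; the proof of Theorem~\ref{lem:MP2} already illustrates the required bookkeeping in the linear drift case.

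First I would apply the convexity of $f$ in \eqref{convex:mgtc:generic} and of $g$ in \eqref{convex_g_mftc}, evaluated along the two trajectories with the law slots filled by $\bar{X}_s$ and $X^v_s$. Taking expectations and merging the $\lambda_x|\delta X_s|^2$ and $\lambda_m\|\delta X_s\|_2^2$ contributions into $(\lambda_x+\lambda_m)\|\delta X_s\|_2^2$, this produces a lower bound for $J(v)-J(\bar{v})$ built from three kinds of terms: the first-order terms $\e[(D_xf,D_vf)(s,\bar{X}_s,\lr(\bar{X}_s),\bar{v}_s)(\delta X_s,\delta v_s)^\top]$ and $\e[D_xg(\bar{X}_T,\lr(\bar{X}_T))^\top\delta X_T]$; the linear-functional-derivative terms $\e\{\widetilde{\e}[D_y\frac{df}{d\nu}(s,\widetilde{\bar{X}_s},\lr(\bar{X}_s),\widetilde{\bar{v}_s})(\bar{X}_s)]^\top\delta X_s\}$ and the analogous one for $g$ (obtained from \eqref{convex:mgtc:generic} after a Fubini relabelling of the independent copy); and the coercive remainder $\lambda_v\|\delta v_s\|_2^2+(\lambda_x+\lambda_m)\|\delta X_s\|_2^2$.

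Next I would compute $\e[\bar{P}_T^\top\delta X_T]$ by applying It\^o's formula to $\bar{P}_s^\top\delta X_s$, exactly as in the passage following \eqref{thm:ger_max_5}. The linear diffusion cross-terms carrying $\bar{Q}^j$ cancel against the $\sigma_1^j,\sigma_3^j$ contributions in the adjoint drift of \eqref{FB:mftc_generic} (the $\sigma_3$ part using that $\e[\,\cdot\,]$ of a product factorises). The decisive step is that the two McKean--Vlasov adjoint terms in \eqref{FB:mftc_generic}, namely $\widetilde{\e}[\widetilde{\bar{P}_s}^\top D_y\frac{db}{d\nu}(\cdots)(\bar{X}_s)]$ and $\widetilde{\e}[D_y\frac{df}{d\nu}(\cdots)(\bar{X}_s)]$, together with the terminal term $\widetilde{\e}[D_y\frac{dg}{d\nu}(\widetilde{\bar{X}_T},\lr(\bar{X}_T))(\bar{X}_T)]$ inside $\bar{P}_T$, exactly reproduce (after Fubini and independent-copy symmetrisation) both the measure-first-order variation of $b$ hidden inside $\bar{P}_s^\top[b(s,X^v_s,\lr(X^v_s),v_s)-b(s,\bar{X}_s,\lr(\bar{X}_s),\bar{v}_s)]$ and the linear-functional-derivative $f$- and $g$-terms generated in the previous step. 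Consequently, after also invoking the first-order condition \eqref{FB:mftc_generic_condition} to replace $\bar{P}_s^\top D_vb$ by $-D_vf$, substituting the resulting identity back into the convexity bound makes all the first-order and measure-derivative $f$- and $g$-terms cancel, leaving
\begin{align*}
    J(v)-J\left(\bar{v}\right)\geq \e\int_t^T \bar{P}_s^\top R_b(s)\,ds +\e\int_t^T \left[\lambda_v|\delta v_s|^2+(\lambda_x+\lambda_m)|\delta X_s|^2\right]ds,
\end{align*}
where $R_b(s)$ is the second-order Taylor remainder of the drift $b$ appearing in the modulus of (B1').

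Finally I would bound the remainder term. By the cone property---here $|\bar{P}_s|\le \frac{L^2}{\lambda_b}(1+|\bar{X}_s|+W_2(\lr(\bar{X}_s),\delta_0)+|\bar{v}_s|)$, obtained from \eqref{FB:mftc_generic_condition}, the nondegeneracy $(D_vb)(D_vb)^\top\ge\lambda_b I_n$ in (B1') and the Schur complement exactly as in Remark~\ref{remark:cone}---the factor $|\bar{P}_s|$ is dominated by $\frac{L^2}{\lambda_b}$ times the denominator in the (B1') modulus, so that $|\bar{P}_s^\top R_b(s)|\le \frac{L^2}{\lambda_b}(L_b^x|\delta X_s|^2+L_b^v|\delta v_s|^2+L_b^m\|\delta X_s\|_2^2)$. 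Taking expectations and assembling, the $\|\delta X_s\|_2^2$-coefficient becomes $\lambda_x+\lambda_m-\frac{L^2(L_b^x+L_b^m)}{\lambda_b}\ge 0$ and the $\|\delta v_s\|_2^2$-coefficient becomes $\lambda_v-\frac{L^2L_b^v}{\lambda_b}>0$ under the stated hypotheses, which gives the claimed coercivity and completes the proof. I expect the main obstacle to be the cancellation in the third paragraph: verifying that the McKean--Vlasov adjoint terms in \eqref{FB:mftc_generic} match precisely both the measure variation of $b$ and the $D_y\frac{df}{d\nu}$, $D_y\frac{dg}{d\nu}$ terms from convexity is the only genuinely new ingredient relative to the frozen-measure MFG argument of Theorem~\ref{lem:MP1'}, and it requires careful use of Fubini's lemma and the independent-copy relabelling.
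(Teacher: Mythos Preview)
Your proposal is correct and follows essentially the same approach as the paper: convexity bound from (B3'), It\^o's formula on $\bar P_s^\top\delta X_s$, Fubini relabelling of the independent copies to match the McKean--Vlasov adjoint terms, use of the first-order condition \eqref{FB:mftc_generic_condition}, and the cone property from (B1') to control the second-order drift remainder. The only cosmetic difference is that the paper writes out the four Fubini identities for the $g$-, $b$- and $f$-measure-derivative terms explicitly before arriving at the analogue of your displayed inequality, whereas you package this as a single cancellation step; your identification of this cancellation as the main point is exactly right.
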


\begin{proof}
Similar to the proof of Theorem~\ref{lem:MP1'}, we estimate the difference  $J(v)-J\left(\bar{v}\right)$ for any control $v\in \lr_{\f}^2(t,T)$. From Assumption (B3'), denoting by $X^v$ the state process corresponding to a control $v$, we have
\begin{align}
    &J(v)-J\left(\bar{v}\right) \notag \\
    \geq\  & \e\Bigg\{\int_t^T \Bigg[ \left[\begin{pmatrix}
        D_x f\\ D_v f
    \end{pmatrix} \left(s,\bar{X}_s,\lr\left(\bar{X}_s\right),\bar{v}_s\right)\right]^\top \begin{pmatrix} X^v_s-\bar{X}_s\\
    v_s-\bar{v}_s\end{pmatrix} + \lambda_v \left|v_s-\bar{v}_s\right|^2+ \lambda_x\left|X^v_s-\bar{X}_s\right|^2 \notag\\
    &\qquad\qquad+\widetilde{\e}\left[\left(D_y\frac{d f}{d\nu}\left(s,\bar{X}_s,\lr(\bar{X}_s),\bar{v}_s\right)\left(\widetilde{\bar{X}_s}\right)\right)^\top \left(\widetilde{X^v_s}-\widetilde{\bar{X}_s}\right)\right]+\lambda_m \left\|\widetilde{X^v_s}-\widetilde{\bar{X}_s}\right\|_2^2 
     \Bigg]ds \notag \\
    &\quad +D_x g\left(\bar{X}_T\right)^\top \left(X^v_T-\bar{X}_T\right)+ \widetilde{\e}\left[\left(D_y\frac{d g}{d\nu}\left(\bar{X}_T,\lr(\bar{X}_T)\right)\left(\widetilde{\bar{X}_T}\right)\right)^\top \left(\widetilde{X^v_T}-\widetilde{\bar{X}_T}\right)\right]\Bigg\}. \label{thm:ger_max_5'}
\end{align}
By applying It\^o's formula to the product $\bar{P}_s^\top \left(X^v_s-\bar{X}_s\right)$ and then taking expectation, we have
\begin{align}
    &\e\left[ D_x g\left(\bar{X}_T\right)^\top \left(X^v_T-\bar{X}_T\right)\right]+\e\widetilde{\e}\left[\left(D_y\frac{d g}{d\nu}\left(\widetilde{\bar{X}_T},\lr(\bar{X}_T)\right)\left(\bar{X}_T\right)\right)^\top \left(X^v_T-\bar{X}_T\right)\right] \notag \\
    =\ & \e\Bigg\{\int_t^T \bar{P}_s^\top \left[b (s, \cdot)\Big|^{\left( X^v_s,\lr\left(X^v_s\right),v_s\right)}_{\left(\bar{X}_s,\lr\left(\bar{X}_s\right),\bar{v}_s\right)} \right] + \sum_{j=1}^n \left(\bar{Q}^j_s\right)^\top\sigma_3^j(s)\widetilde{\e}\left[\widetilde{X^v_s}-\widetilde{\bar{X}_s}\right]  \notag \\
    &\quad\qquad -\left[ D_x b\left(s,\bar{X}_s,\lr\left(\bar{X}_s\right),\bar{v}_s\right)^\top \bar{P}_s+D_x f\left(s,\bar{X}_s,\lr\left(\bar{X}_s\right),\bar{v}_s\right) \right]^\top \left( X^v_s-\bar{X}_s\right) \notag \\
    &\quad\qquad - \widetilde{\e}\left[\widetilde{\bar{P}_s}^\top\left(D_y \frac{d b}{d\nu}\left(s,\widetilde{\bar{X}_s},\lr(\bar{X}_s),\widetilde{\bar{v}_s}\right)\left(\bar{X}_s\right) \right) \left( X^v_s-\bar{X}_s\right)\right]-\sum_{j=1}^n \widetilde{\e} \left[\widetilde{\bar{Q}^j_s}^\top \sigma_3^j(s) \left( X^v_s-\bar{X}_s\right)\right]\notag\\
    &\quad\qquad - \widetilde{\e}\left[\left(D_y \frac{d f}{d\nu}\left(s,\widetilde{\bar{X}_s},\lr(\bar{X}_s),\widetilde{\bar{v}_s}\right)\left(\bar{X}_s\right) \right)^\top \left( X^v_s-\bar{X}_s\right)\right] ds\Bigg\}. \notag
\end{align}
By Fubini's lemma, we know that 
\small
\begin{align*}
    &(i)\ \ \;\e\widetilde{\e}\left[\left(D_y\frac{d g}{d\nu}\left(\widetilde{\bar{X}_T},\lr(\bar{X}_T)\right)\left(\bar{X}_T\right)\right)^\top \left(X^v_T-\bar{X}_T\right)\right]=\e\widetilde{\e}\left[\left(D_y\frac{d g}{d\nu}\left({\bar{X}_T},\lr(\bar{X}_T)\right)\left(\widetilde{\bar{X}_T}\right)\right)^\top \left(\widetilde{X^v_T}-\widetilde{\bar{X}_T}\right)\right], \\
    &(ii)\ \;\e\widetilde{\e}\left[\widetilde{\bar{P}_s}^\top\left(D_y \frac{d b}{d\nu}\left(s,\widetilde{\bar{X}_s},\lr(\bar{X}_s),\widetilde{\bar{v}_s}\right)\left(\bar{X}_s\right) \right) \left( X^v_s-\bar{X}_s\right)\right]=\e\widetilde{\e}\left[\bar{P}_s^\top\left(D_y \frac{d b}{d\nu}\left(s,\bar{X}_s,\lr(\bar{X}_s),\bar{v}_s\right)\left(\widetilde{\bar{X}_s}\right) \right) \left( \widetilde{X^v_s}-\widetilde{\bar{X}_s}\right)\right],\\
    &(iii)\ \e\widetilde{\e} \left[\widetilde{\bar{Q}^j_s}^\top \sigma_3^j(s) \left( X^v_s-\bar{X}_s\right)\right]=\e\widetilde{\e} \left[{\bar{Q}^j_s}^\top \sigma_3^j(s) \left( \widetilde{X^v_s}-\widetilde{\bar{X}_s}\right)\right],\\
    &(iv)\ \;\e\widetilde{\e}\left[\left(D_y \frac{d f}{d\nu}\left(s,\widetilde{\bar{X}_s},\lr(\bar{X}_s),\widetilde{\bar{v}_s}\right)\left(\bar{X}_s\right) \right)^\top \left( X^v_s-\bar{X}_s\right)\right]=\e\widetilde{\e}\left[\left(D_y \frac{d f}{d\nu}\left(s,\bar{X}_s,\lr(\bar{X}_s),\bar{v}_s\right)\left(\widetilde{\bar{X}_s}\right) \right)^\top \left(\widetilde{X^v_s}-\widetilde{\bar{X}_s}\right)\right].
\end{align*}
\normalsize
Therefore, we know that 
\begin{align}
    &\e\left\{ D_x g\left(\bar{X}_T\right)^\top \left(X^v_T-\bar{X}_T\right)+\widetilde{\e}\left[\left(D_y\frac{d g}{d\nu}\left({\bar{X}_T},\lr(\bar{X}_T)\right)\left(\widetilde{\bar{X}_T}\right)\right)^\top \left(\widetilde{X^v_T}-\widetilde{\bar{X}_T}\right)\right] \right\} \notag \\
    =\ & \e\Bigg\{\int_t^T \bar{P}_s^\top \bigg[b(s, \cdot) \Big|^{\left(X^v_s,\lr\left(X^v_s\right),v_s\right)}_{\left(\bar{X}_s,\lr\left(\bar{X}_s\right),\bar{v}_s\right)}  -D_x b\left(s,\bar{X}_s,\lr\left(\bar{X}_s\right),\bar{v}_s\right) \left( X^v_s-\bar{X}_s\right)  \notag \\
    &\quad\qquad\qquad - \widetilde{\e}\left[\bar{P}_s^\top\left(D_y \frac{d b}{d\nu}\left(s,\bar{X}_s,\lr(\bar{X}_s),\bar{v}_s\right)\left(\widetilde{\bar{X}_s}\right) \right) \left( \widetilde{X^v_s}-\widetilde{\bar{X}_s}\right)\right] \bigg]\notag\\
    &\quad\qquad -D_x f\left(s,\bar{X}_s,\lr\left(\bar{X}_s\right),\bar{v}_s\right)^\top \left( X^v_s-\bar{X}_s\right) \notag \\
    &\quad\qquad - \widetilde{\e}\left[\left(D_y \frac{d f}{d\nu}\left(s,\bar{X}_s,\lr(\bar{X}_s),\bar{v}_s\right)\left(\widetilde{\bar{X}_s}\right) \right)^\top \left(\widetilde{X^v_s}-\widetilde{\bar{X}_s}\right)\right] ds\Bigg\}. \notag
\end{align}
From \eqref{FB:mftc_generic_condition}, we know that 
\begin{align}
    &\e\Bigg\{ D_x g\left(\bar{X}_T\right)^\top \left(X^v_T-\bar{X}_T\right)+\widetilde{\e}\left[\left(D_y\frac{d g}{d\nu}\left({\bar{X}_T},\lr(\bar{X}_T)\right)\left(\widetilde{\bar{X}_T}\right)\right)^\top \left(\widetilde{X^v_T}-\widetilde{\bar{X}_T}\right)\right] \notag\\
    &\quad +\int_t^T \left[\begin{pmatrix}
        D_x f\\ D_v f
    \end{pmatrix}\left(s,\bar{X}_s,\lr\left(\bar{X}_s\right),\bar{v}_s\right)\right]^\top \begin{pmatrix} X^v_s-\bar{X}_s\\ v_s-\bar{v}_s\end{pmatrix} \notag\\
    &\qquad\qquad + \widetilde{\e}\left[\left(D_y \frac{d f}{d\nu}\left(s,\bar{X}_s,\lr(\bar{X}_s),\bar{v}_s\right)\left(\widetilde{\bar{X}_s}\right) \right)^\top \left(\widetilde{X^v_s}-\widetilde{\bar{X}_s}\right)\right] ds\Bigg\} \notag \\
    =\ & \e\Bigg\{\int_t^T \bar{P}_s^\top \bigg[b (s, \cdot)\Big|^{\left(X^v_s,\lr\left(X^v_s\right),v_s\right)}_{\left(\bar{X}_s,\lr\left(\bar{X}_s\right),\bar{v}_s\right)}  -\left[(D_x b, D_v b) \left(s,\bar{X}_s,\lr\left(\bar{X}_s\right),\bar{v}_s\right)\right] \begin{pmatrix} X^v_s-\bar{X}_s\\ v_s-\bar{v}_s\end{pmatrix} \notag\\
    &\quad\qquad\qquad -\widetilde{\e}\left[\left(D_y \frac{d b}{d\nu}\left(s,\bar{X}_s,\lr(\bar{X}_s),\bar{v}_s\right)\left(\widetilde{\bar{X}_s}\right) \right) \left( \widetilde{X^v_s}-\widetilde{\bar{X}_s}\right)\right] \bigg] ds\Bigg\}. \label{thm:ger_max_4'}
\end{align}
In view of \eqref{FB:mftc_generic_condition} and Assumption (B1'), we can compute that
\begin{align*}
    \bar{P}_s=-\left[\left((D_vb)(D_vb)^\top\right)^{-1}(D_vb)\left(s,\bar{X}_s,\lr\left(\bar{X}_s\right),\bar{v}_s\right)\right](D_v f)\left(s,\bar{X}_s,\lr\left(\bar{X}_s\right),\bar{v}_s\right),
\end{align*}
and then, from Assumption (B2), we have the following cone property:
\begin{equation}\label{p_norm'}
\begin{split}
    \left|\bar{P}_s\right|\le \frac{L^2}{\lambda_b}\left(1+\left|\bar{X}_s\right|+W_2\left(\lr(\bar{X}_s),\delta_0\right)+\left|\bar{v}_s\right|\right).
\end{split}
\end{equation}
Therefore, we deduce from Assumption (B1') that 
\begin{align}
    &\Bigg|\e\Bigg\{\int_t^T \bar{P}_s^\top \bigg[b (s, \cdot)\Big|^{ \left(X^v_s,\lr\left(X^v_s\right),v_s\right)}_{\left(\bar{X}_s,\lr\left(\bar{X}_s\right),\bar{v}_s\right)} -\left[(D_x b, D_v b) \left(s,\bar{X}_s,\lr\left(\bar{X}_s\right),\bar{v}_s\right)\right] \begin{pmatrix} X^v_s-\bar{X}_s\\ v_s-\bar{v}_s\end{pmatrix} \notag\\
    &\qquad\qquad\qquad -\widetilde{\e}\left[\bar{P}_s^\top\left(D_y \frac{d b}{d\nu}\left(s,\bar{X}_s,\lr(\bar{X}_s),\bar{v}_s\right)\left(\widetilde{\bar{X}_s}\right) \right) \left( \widetilde{X^v_s}-\widetilde{\bar{X}_s}\right)\right] \bigg] ds\Bigg\} \Bigg| \notag\\
    \le\ & \frac{L^2}{\lambda_b} \e\left\{\int_t^T \left[ L_b^x \left|X^v_s-\bar{X}_s\right|^2 +L_b^v \left|v_s-\bar{v}_s\right|^2+L_b^m \left\|\widetilde{X^v_s}-\widetilde{\bar{X}_s}\right\|_2^2 \right] ds  \right\}. \label{thm:ger_max_6'}
\end{align}
Substituting \eqref{thm:ger_max_6'} into \eqref{thm:ger_max_4'}, we know that
\begin{align*}
    &\e\Bigg\{ D_x g\left(\bar{X}_T\right)^\top \left(X^v_T-\bar{X}_T\right)+\widetilde{\e}\left[\left(D_y\frac{d g}{d\nu}\left({\bar{X}_T},\lr(\bar{X}_T)\right)\left(\widetilde{\bar{X}_T}\right)\right)^\top \left(\widetilde{X^v_T}-\widetilde{\bar{X}_T}\right)\right] \\
    &\quad +\int_t^T \left[\begin{pmatrix}
        D_x f\\ D_v f
    \end{pmatrix} \left(s,\bar{X}_s,\lr\left(\bar{X}_s\right),\bar{v}_s\right)\right]^\top \begin{pmatrix} X^v_s-\bar{X}_s\\ v_s-\bar{v}_s\end{pmatrix}  ds\\
    &\quad + \int_t^T\widetilde{\e}\left[\left(D_y \frac{d f}{d\nu}\left(s,\bar{X}_s,\lr(\bar{X}_s),\bar{v}_s\right)\left(\widetilde{\bar{X}_s}\right) \right)^\top \left(\widetilde{X^v_s}-\widetilde{\bar{X}_s}\right)\right] ds\Bigg\}  \\
    \geq\ & -\frac{L^2}{\lambda_b} \e\left[\int_t^T (L_b^x+L_b^m) \left|X^v_s-\bar{X}_s\right|^2 +L_b^v \left|v_s-\bar{v}_s\right|^2 ds  \right].
\end{align*}
Combining the last inequality with \eqref{thm:ger_max_5'}, we know that 
\begin{align*}
    J(v)-J\left(\bar{v}\right) \geq\  & \e\left\{\int_t^T \left[ \left(\lambda_x+\lambda_m-\frac{L^2(L_b^x+L_b^m)}{\lambda_b}\right) \left|X^v_s-\bar{X}_s\right|^2+ \left(\lambda_v-\frac{L^2L_b^v}{\lambda_b}\right)\left|v_s-\bar{v}_s\right|^2\right]ds\right\}\\
    \geq\ & \left(\lambda_v-\frac{L^2L_b^v}{\lambda_b}\right) \e\left[\int_t^T \left|v_s-\bar{v}_s\right|^2ds\right],
\end{align*}
from which we obtain the desired result. 
\end{proof}

We next show that Assumption (B1') and the strong convexity condition in (B3') yield that the coefficients of FBSDEs \eqref{FB:mftc_generic} satisfy the $\beta$-monotonicity, which consequently implies the global well-posedness of FBSDEs \eqref{FB:mftc_generic}.

\begin{theorem}\label{thm:mono_generic'}
    Under Assumptions (B1'), (B2) and (B3'), suppose that $\lambda_x+\lambda_m\geq\frac{L^2(L_b^x+L_b^m)}{\lambda_b}$ and $\lambda_v>\frac{L^2 L_b^v}{\lambda_b}$, then FBSDEs \eqref{FB:mftc_generic} have a unique global solution.
\end{theorem}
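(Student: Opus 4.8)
The plan is to establish that the lifted coefficients of the forward--backward system \eqref{FB:mftc_generic} satisfy the $\beta$-monotonicity of Condition~\ref{Condition_mono} with $\Gamma_\beta=0$ and with $\beta(s,X,P,Q):=V$, the control determined pointwise by the first-order condition \eqref{FB:mftc_generic_condition}, and then to invoke Lemma~\ref{lem:1}; since $\Gamma_\beta=0$, Lemma~\ref{lem:1} applies for any $\Lambda_\beta>0$ and yields the global solution directly. The structure combines the Fubini symmetrization used for the MFTC problem under (B1) with the generic-drift estimates of Theorem~\ref{thm:mono_generic}.

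First I would write out the lifted maps for \eqref{FB:mftc_generic}, analogously to \eqref{def:BAFG'}: the forward drift $\mathbf{B}(s,X,P,Q)=b(s,X,\lr(X),V)$ is now genuinely nonlinear and distribution dependent, $\mathbf{A}$ is affine through $\sigma_0,\sigma_1,\sigma_3$, and $\mathbf{F}$ carries both the adjoint drift term $-D_xb^\top P$ and the McKean--Vlasov interaction terms $\widetilde{\e}[\widetilde{P}^\top D_y\frac{db}{d\nu}(\cdot)(X)]$, $\sum_j(\sigma_3^j)^\top\e[Q^j]$ and $\widetilde{\e}[D_y\frac{df}{d\nu}(\cdot)(X)]$, while $\mathbf{G}$ carries $D_xg+\widetilde{\e}[D_y\frac{dg}{d\nu}(\cdot)(X)]$. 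The heart of the argument is the evaluation of the monotonicity functional obtained by pairing the differences of these maps against $(X'-X)$, $(P'-P)$ and $({Q'}^{j}-Q^j)$. Each tilde-expectation term is symmetrized by Fubini's lemma exactly as in the MFTC proof, so that, for instance, $\widetilde{\e}[D_y\frac{df}{d\nu}(s,\widetilde{X},\lr(X),\widetilde{V})(X)]^\top(X'-X)$ is recast into a symmetric measure-variable contribution aligned precisely with the term $\widetilde{\e}[(D_y\frac{df}{d\nu}(\cdot)(\widetilde{X}))^\top(\widetilde{X'}-\widetilde{X})]$ in \eqref{convex:mgtc:generic}. Applying the joint convexity (B3') then produces the requisite negative quadratics in $\|V'-V\|_2^2$ and $\|X'-X\|_2^2$, with coefficients controlled by $\lambda_v$ and $\lambda_x+\lambda_m$ respectively, together with the matched mean field interaction pieces.

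The generic-drift contributions are handled as in Theorem~\ref{thm:mono_generic}. Using \eqref{FB:mftc_generic_condition} and $(D_vb)(D_vb)^\top\geq\lambda_bI_n$, I would solve $P$ and $P'$ through the pseudoinverse $((D_vb)(D_vb)^\top)^{-1}(D_vb)$, obtaining the cone property \eqref{p_norm'} and a Lipschitz bound on $|P'-P|$ in terms of $|X'-X|$, $|V'-V|$ and $W_2(\lr(X),\lr(X'))$. Feeding this into the second-order Taylor-type estimate of (B1') and using the boundedness of $D_xb$ and $D_y\frac{db}{d\nu}$ absorbs the nonlinear and distributional remainders, costing the factors $\frac{L^2(L_b^x+L_b^m)}{\lambda_b}$ on $\|X'-X\|_2^2$ and $\frac{L^2L_b^v}{\lambda_b}$ on $\|V'-V\|_2^2$. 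A single Young inequality, combined with the hypotheses $\lambda_x+\lambda_m\geq\frac{L^2(L_b^x+L_b^m)}{\lambda_b}$ and $\lambda_v>\frac{L^2L_b^v}{\lambda_b}$, then collapses the whole estimate to $-(\lambda_v-\frac{L^2L_b^v}{\lambda_b})\|V'-V\|_2^2$, giving Condition~\ref{Condition_mono}(i)(a) with $\Lambda_\beta=\lambda_v-\frac{L^2L_b^v}{\lambda_b}>0$ and $\Gamma_\beta=0$. The convexity \eqref{convex_g_mftc} of $g$, symmetrized in the same way, yields Condition~\ref{Condition_mono}(i)(b), and the $\beta$-Lipschitz estimates of Condition~\ref{Condition_mono}(ii) follow routinely from (B1') and (B2).

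The main obstacle I anticipate is the simultaneous bookkeeping of the McKean--Vlasov interaction and the drift nonlinearity. Unlike the MFG setting of Theorem~\ref{thm:mono_generic}, the backward data here contains the genuinely distributional terms $\widetilde{\e}[\widetilde{P}^\top D_y\frac{db}{d\nu}(\cdot)(X)]$ and $\widetilde{\e}[D_y\frac{df}{d\nu}(\cdot)(X)]$, and one must verify that the Fubini symmetrization lines these up exactly with the measure-derivative terms of \eqref{convex:mgtc:generic} while the cone estimate for $P$ remains strong enough to dominate the extra cross terms $\|X'-X\|_2\|V'-V\|_2$ generated both by the nonlinearity of $b$ and by its $m$-dependence. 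Keeping the controlling constant at $\frac{L^2(L_b^x+L_b^m)}{\lambda_b}$, so that the stated threshold on $\lambda_x+\lambda_m$ is exactly what the estimate needs, is where the delicate accounting lies.
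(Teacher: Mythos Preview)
Your proposal is correct and follows essentially the same route as the paper: verify Condition~\ref{Condition_mono} with $\Gamma_\beta=0$ via Fubini symmetrization, the cone property, the Taylor-type remainder bound from (B1'), and the strong convexity (B3'), then invoke Lemma~\ref{lem:1}. One over-anticipated complication worth flagging: in the MFTC setting the presence of the $D_y\frac{db}{d\nu}$ term in the backward data means that, after the Fubini symmetrization, the monotonicity functional decomposes \emph{exactly} into two second-order Taylor remainders of $b$ (now including the measure derivative) paired with $P'$ and $P$ separately---see \eqref{thm_generic_1'}---so no $|P'-P|$ estimate is needed, no cross terms $\|X'-X\|_2\|V'-V\|_2$ appear, and Young's inequality is never used; the bound is already diagonal and the paper reads off $\Lambda_\beta=2\bigl(\lambda_v-\tfrac{L^2L_b^v}{\lambda_b}\bigr)$ directly.
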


\begin{proof}
We shall show that the coefficients of FBSDEs \eqref{FB:mftc_generic} satisfy Condition~\ref{Condition_mono}. For $s\in[t,T]$, $X,X',P,P'\in L^2\left(\Omega,\f,\mathbb{P};\brn\right)$, $Q,Q'\in  L^2\left(\Omega,\f,\mathbb{P};\br^{n\times n}\right)$ and $V,V'\in L^2\left(\Omega,\f,\mathbb{P};\brd\right)$ satisfying
\begin{equation}\label{thm_generic_6}
\begin{aligned}
    &D_v b(s,X,\lr(X),V)^\top P+ D_v f(s,X,\lr(X),V)=0,\\
    &D_v b(s,X',\lr(X'),V')^\top P+ D_v f(s,X',\lr(X'),V')=0,
\end{aligned}
\end{equation}
and by the Fubini's lemma, we can compute that
\begin{align}
	&\e\bigg\{\left[b(s,X',\lr(X'),V')-b(s,X,\lr(X),V)\right]^\top  (P'-P) \notag\\
    &\quad +\sum_{j=1}^n \left[\sigma^j_1(s)(X'-X)+\sigma^j_3(s)\e[X'-X]\right]^\top  \left({Q'}^{j}-Q^j\right) \notag \\
    &\quad -\bigg\{D_x b(s,X',\lr(X'),V')^\top P' -D_x b(s,X,\lr(X),V)^\top P \notag\\
    &\quad\qquad + \widetilde{\e}\left[\widetilde{{P'}}^\top\left(D_y \frac{d b}{d\nu}\left(s,\widetilde{X'},\lr(X'),\widetilde{V'}\right)\left(X'\right) \right) \right]- \widetilde{\e}\left[\widetilde{{P}}^\top\left(D_y \frac{d b}{d\nu}\left(s,\widetilde{X},\lr(X),\widetilde{V}\right)\left(X\right) \right) \right]\notag \\
    &\quad\qquad +\sum_{j=1}^n \left(\sigma^j_1(s)\right)^\top \left({Q'}^j-Q^j\right)+\sum_{j=1}^n \left(\sigma_3^j(s)\right)^\top \e\left[{Q'}^j-Q^j\right]  \notag\\
    &\quad\qquad + D_x f(s,X',\lr(X'),V')-D_x f(s,X,\lr(X),V) \notag \\
    &\quad\qquad +\widetilde{\e}\left[D_y \frac{d f}{d\nu}\left(s,\widetilde{X'},\lr(X'),\widetilde{V'}\right)(X') \right]-\widetilde{\e}\left[D_y \frac{d f}{d\nu}\left(s,\widetilde{X},\lr(X),\widetilde{V}\right)(X) \right]\bigg\}^\top (X'-X)\bigg\} \notag \\
	=\ & \e\bigg\{ -\bigg[b(s, \cdot) \Big|^{\left(X,\lr(X),V\right)}_{\left(X',\lr(X'),V'\right)} - D_x b\left(s,X',\lr(X'),V'\right) (X-X') \notag \\
	&\quad\qquad - \widetilde{\e}\left[D_y \frac{d b}{d\nu}\left(s,\widetilde{X'},\lr(X'),\widetilde{V'}\right)(X')\right](X-X') \bigg]^\top P' \notag \\
    &\ \quad -\bigg[b(s, \cdot) \Big|_{\left(X,\lr(X),V\right)}^{\left(X',\lr(X'),V'\right)} - D_x b\left(s,X,\lr(X),V\right) (X'-X) \notag\\
    &\quad\qquad - \widetilde{\e}\left[D_y \frac{d b}{d\nu}\left(s,\widetilde{X},\lr(X),\widetilde{V}\right)(X)\right](X'-X)\bigg]^\top P \notag \\
    &\ \quad - \left(D_xf(s,X',\lr(X'),V')-D_xf(s,X,\lr(X),V)\right)^\top  (X'-X) \notag\\
    &\ \quad - \widetilde{\e}\left[D_y \frac{d f}{d\nu}\left(s,\widetilde{X'},\lr(X'),\widetilde{V'}\right)(X')-D_y \frac{d f}{d\nu}\left(s,\widetilde{X},\lr(X),\widetilde{V}\right)(X) \right]^\top (X'-X)\bigg\} \notag\\
    =\ & \e\Bigg\{ -\bigg[b(s, \cdot) \Big|^{\left(X,\lr(X),V\right)}_{\left(X',\lr(X'),V'\right)} - \left[(D_x b, D_v b) \left(s,X',\lr(X'),V'\right)\right]\begin{pmatrix} X-X'\\ V-V'\end{pmatrix}\notag \\
    &\quad\qquad - \widetilde{\e}\left[D_y \frac{d b}{d\nu}\left(s,X',\lr(X'),V'\right)\left(\widetilde{X'}\right)\left(\widetilde{X}-\widetilde{X'}\right)\right] \bigg]^\top P' \notag \\
    &\ \quad -\bigg[b(s, \cdot) \Big|_{\left(X,\lr(X),V\right)}^{\left(X',\lr(X'),V'\right)} - \left[(D_x b, D_v b) \left(s,X,\lr(X),V\right)\right] \begin{pmatrix} X'-X \\
    V'-V\end{pmatrix} \notag\\ 
    &\quad\qquad - \widetilde{\e}\left[D_y \frac{d b}{d\nu}\left(s,X,\lr(X),V\right)\left(\widetilde{X}\right)\left(\widetilde{X'}-\widetilde{X}\right)\right]\bigg]^\top P \notag \\
    &\ \quad - \left[\begin{pmatrix}
        D_x f \\ D_v f
    \end{pmatrix}(s,\cdot) \bigg|^{\left(X',\lr(X'),V'\right)}_{\left(X,\lr(X),V\right)} \right]^\top \begin{pmatrix}
        X'-X \\ V'-V
    \end{pmatrix} \notag\\
    &\ \quad - \widetilde{\e}\left[\left(D_y \frac{d f}{d\nu}\left(s,X',\lr(X'),V'\right)\left(\widetilde{X'}\right)-D_y \frac{d f}{d\nu}\left(s,X,\lr(X),V\right)\left(\widetilde{X}\right) \right)^\top \left(\widetilde{X'}-\widetilde{X}\right)\right]\Bigg\}. \label{thm_generic_1'}
\end{align} 
Similar to \eqref{p_norm'}, from \eqref{thm_generic_6}, Assumptions (B1') and (B2), we deduce that
\begin{align*}
    |P'|\le \frac{L^2}{\lambda_b}\left(1+|X'|+W_2(\lr(X'),\delta_0)+\left|V'\right|\right).
\end{align*}
Therefore, from Assumption (B1'), we arrive with the following
\begin{align}
    &\e\Bigg\{ -\bigg[b\left(s,\cdot\right)\Big|^{\left(X,\lr(X),V\right)}_{\left(X',\lr(X'),V'\right)} - \left[(D_x b, D_v b) \left(s, X',\lr\left(X'\right),V'\right)\right] \begin{pmatrix} X-X' \\ V-V'\end{pmatrix} \notag \\
    &\quad\qquad - \widetilde{\e}\left[D_y \frac{d b}{d\nu}\left(s,\widetilde{X'},\lr(X'),\widetilde{V'}\right)(X')\right](X-X') \bigg]^\top P' \Bigg\}\notag \\
    \le\ & \frac{L^2}{\lambda_b}\ \e\left[ (L_b^x+L_b^m) |X'-X|^2+L_b^v |V'-V|^2 \right]. \label{thm_generic_2'}
\end{align}
In a  similar fashion, we also have
\begin{align}
    &\e\Bigg\{ -\bigg[b(s, \cdot) \Big|_{\left(X,\lr(X),V\right)}^{\left(X',\lr(X'),V'\right)} - \left[(D_x b, D_v b) \left(s,X,\lr(X),V\right) \right] \begin{pmatrix}  X'-X\\ V'-V \end{pmatrix}  \notag \\
    &\quad\qquad - \widetilde{\e}\left[D_y \frac{d b}{d\nu}\left(s,\widetilde{X},\lr(X),\widetilde{V}\right)(X)\right](X'-X) \bigg]^\top P \Bigg\}\notag \\
    \le\ & \frac{L^2}{\lambda_b}\ \e\left[ (L_b^x+L_b^m) |X'-X|^2+L_b^v |V'-V|^2 \right]. \label{thm_generic_3'}
\end{align}
Since $f$ satisfies Assumption (B3'),  we know that
\begin{align}
    &\e\Bigg\{ - \left[\begin{pmatrix}
        D_x f \\ D_v f
    \end{pmatrix}(s,\cdot) \bigg|^{\left(X',\lr(X'),V'\right)}_{\left(X,\lr(X),V\right)} \right]^\top \begin{pmatrix}
        X'-X \\ V'-V
    \end{pmatrix}  \notag\\
    &\ \quad - \widetilde{\e}\left[\left(D_y \frac{d f}{d\nu}\left(s,X',\lr(X'),V'\right)\left(\widetilde{X'}\right)-D_y \frac{d f}{d\nu}\left(s,X,\lr(X),V\right)\left(\widetilde{X}\right) \right)^\top \left(\widetilde{X'}-\widetilde{X}\right)\right]\Bigg\} \notag\\
    \le\ & -2\lambda_v \left\| V'-V\right\|_2^2 -2\left(\lambda_x+\lambda_m\right) \left\| X'-X \right\|_2^2.\label{thm_generic_4'}
\end{align}
Substituting \eqref{thm_generic_2'}-\eqref{thm_generic_4'} back into \eqref{thm_generic_1'}, we conclude with the following:
\begin{align*}
	&\textit{The left hand side of \eqref{thm_generic_1'}}\\
	\le\ & -2\left(\lambda_v-\frac{L^2L_b^v}{\lambda_b}\right) \left\|V'-V\right\|_2^2 -  2\left(\lambda_x+\lambda_m-\frac{L^2(L_b^x+L_b^m)}{\lambda_b}\right) \left\|X'-X\right\|_2^2 ,
\end{align*}  
from which we know that Condition \ref{Condition_mono} (i)(a) is valid with $\Lambda_\beta=2\lambda_v-\frac{2L^2L_b^v}{\lambda_b}$ and $\Gamma_\beta=0$. In a similar way, we can show that Condition \ref{Condition_mono} (i)(b) is also satisfied. Therefore, as a consequence of Lemma~\ref{lem:1}, we obtain the well-posedness of FBSDEs \eqref{FB:mftc_generic}.
\end{proof}

As an immediate consequence of Theorems~\ref{lem:MP2'} and \ref{thm:mono_generic'}, we obtain the following solvability of the MFTC problem \eqref{intro_MFTC} with a generic $b$.

\begin{corollary}
    Under assumptions in Theorem~\ref{thm:mono_generic'}, the MFTC problem \eqref{intro_MFTC} has a unique optimal control. 
\end{corollary}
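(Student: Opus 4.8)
The plan is to obtain the corollary by directly chaining the two preceding results of this subsection, namely the existence-of-solution theorem and the sufficiency of the maximum principle. First I would invoke Theorem~\ref{thm:mono_generic'} to secure the existence half: under Assumptions (B1'), (B2), (B3') together with the parameter constraints $\lambda_x+\lambda_m\geq\frac{L^2(L_b^x+L_b^m)}{\lambda_b}$ and $\lambda_v>\frac{L^2L_b^v}{\lambda_b}$, that theorem delivers a unique global adapted solution $(\bar X,\bar P,\bar Q)$ of the forward–backward system \eqref{FB:mftc_generic}, and the accompanying pointwise first-order condition \eqref{FB:mftc_generic_condition} then singles out an associated control $\bar v\in\lr^2_{\f}(t,T)$, so that the quadruple $(\bar X,\bar P,\bar Q,\bar v)$ solves \eqref{FB:mftc_generic}–\eqref{FB:mftc_generic_condition}.

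Second, I would feed this solution into Theorem~\ref{lem:MP2'}. Its hypotheses coincide exactly with the ones assumed here, and its sole standing requirement---that FBSDEs \eqref{FB:mftc_generic}–\eqref{FB:mftc_generic_condition} admit a solution---has just been verified. Hence the sufficiency of the stochastic maximum principle applies, and $\bar v$ is an optimal control of the MFTC problem \eqref{intro_MFTC}. Uniqueness is already built into Theorem~\ref{lem:MP2'}: the coercivity estimate $J(v)-J(\bar v)\geq\big(\lambda_v-\frac{L^2L_b^v}{\lambda_b}\big)\,\e\!\int_t^T|v_s-\bar v_s|^2\,ds$ forces any competing control to carry a strictly larger cost whenever $\lambda_v>\frac{L^2L_b^v}{\lambda_b}$, so $\bar v$ is the unique minimizer.

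There is essentially no genuine obstacle, since the statement is a bookkeeping combination of the existence result (Theorem~\ref{thm:mono_generic'}) and the verification result (Theorem~\ref{lem:MP2'}). The only point deserving a moment of care is the consistency of the two notions of ``solution'': I would confirm that the quadruple extracted from Theorem~\ref{thm:mono_generic'} is precisely the object whose optimality Theorem~\ref{lem:MP2'} asserts, i.e. that the control $\bar v$ recovered from \eqref{FB:mftc_generic_condition} is admissible and square-integrable. This follows from the cone property \eqref{p_norm'} together with the quadratic growth imposed in Assumption (B2), which control $|\bar v_s|$ in terms of $|\bar X_s|$, $|\bar P_s|$ and $W_2(\lr(\bar X_s),\delta_0)$. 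Once this is noted, the corollary is immediate.
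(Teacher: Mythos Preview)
Your approach is correct and matches the paper's own treatment: the corollary is stated there as an immediate consequence of Theorems~\ref{lem:MP2'} and~\ref{thm:mono_generic'}, with no separate proof given. One small slip in your closing remark: the cone property~\eqref{p_norm'} bounds $|\bar P_s|$ in terms of $|\bar X_s|$, $|\bar v_s|$ and the Wasserstein moment, not the other way around; the square-integrability of $\bar v$ instead follows from the first-order condition~\eqref{FB:mftc_generic_condition} together with the strong convexity of $f$ in $v$ and the Lipschitz regularity in (B1') and (B2), which make the implicit map $(x,m,p)\mapsto\hat v$ Lipschitz.
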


\section*{\normalsize Acknowledgement}

Alain Bensoussan is supported by the National Science Foundation under grant NSF-DMS-2204795. Ziyu Huang acknowledges the financial supports as a postdoctoral fellow from Department of Statistics of The Chinese University of Hong Kong. Shanjian Tang is supported by the National Natural Science Foundation of China under grant nos. 12031009 and 11631004. Phillip Yam acknowledges the financial supports from HKGRF-14301321 with the project title ``General Theory for Infinite Dimensional Stochastic Control: Mean Field and Some Classical Problems'', and HKGRF-14300123 with the project title ``Well-posedness of Some Poisson-driven Mean Field Learning Models and their Applications''. He also thanks University of Texas at Dallas for the kind invitation to be a Visiting Professor in the Naveen Jindal School of Management during his sabbatical leave.

\footnotesize


\begin{thebibliography}{99}

\bibitem{SA}
\textsc{S. Ahuja}, Wellposedness of mean field games with common noise under a weak monotonicity condition. 
\emph{SIAM J. Control Optim.} \textbf{54} (2016), no.~1, 30--48.

\bibitem{SA1} 
\textsc{S. Ahuja, W. Ren and T. W. Yang}, Forward-backward stochastic differential equations with monotone functionals and mean field games with common noise. 
\emph{Stochastic Process. Appl.} \textbf{129} (2019), no.~10, 3859--3892.

\bibitem{AL} 
\textsc{L. Ambrosio, N. Gigli and G. Savar\'e}, \emph{Gradient flows in metric spaces and in the space of probability measures}. second edition, Lectures in Mathematics ETH Z\"urich, Birkh\"auser, Basel, 2008. 

\bibitem{AB_book} 
\textsc{A. Bensoussan, J. Frehse and S. C. P. Yam}, \emph{Mean field games and mean field type control theory}. SpringerBriefs in Mathematics, Springer, New York, 2013. 

\bibitem{AB7}
\textsc{A. Bensoussan, P. J. Graber and S. C. P. Yam}, Control on Hilbert spaces and application to some mean field type control problems. 
\emph{Ann. Appl. Probab.} \textbf{34} (2024), no.~4, 4085--4136.

\bibitem{AB10} 
\textsc{A. Bensoussan, Z. HUang, S. Tang and S. C. P. Yam}, Degenerate mean field type control with linear and unbounded diffusion, and their associated equations. 2023, arxiv: 2311.09138.

\bibitem{AB11}
\textsc{A. Bensoussan, Z. HUang, S. Tang and S. C. P. Yam}, A class of degenerate mean field games, associated FBSDEs and master equations. 2024, arxiv: 2410.12404.

\bibitem{AB6} 
\textsc{A. Bensoussan, Z. HUang and S. C. P. Yam}, Control theory on Wasserstein space: a new approach to optimality conditions.
\emph{Ann. Math. Sci. Appl.} \textbf{8} (2023), no.~3, 565--628.

\bibitem{AB8} 
\textsc{A. Bensoussan, Z. HUang and S. C. P. Yam}, Maximum principle for mean field type control problems with general volatility functions.
\emph{Int. Game Theory Rev.} \textbf{26} (2024), no.~2, Paper No. 2440003, 31 pp. 

\bibitem{AB5} 
\textsc{A. Bensoussan, H. M. Tai and S. C. P. Yam}, Mean field type control problems, some Hilbert-space-valued FBSDEs, and related equations. 2023, arxiv: 2305.04019.

\bibitem{AB9'} 
\textsc{A. Bensoussan, H. M. Tai, T. K. Wong and S. C. P. Yam}, A control theoretical approach to mean field games and associated master equations. 2024, arxiv: 2402.01639.

\bibitem{AB10'} 
\textsc{A. Bensoussan, T. K. Wong, S. C. P. Yam and H. Yuan}, Global well-posedness of first-order mean field games and master equations with nonlinear dynamics. 2023, arxiv: 2311.11896.

\bibitem{AB10''}
\textsc{A. Bensoussan, T. K. Wong, S. C. P. Yam and H. Yuan}, A theory of first order mean field type control problems and their equations. 2023, arxiv: 2305.11848.

\bibitem{AB} 
\textsc{A. Bensoussan and S. C. P. Yam}, Control problem on space of random variables and master equation.
\emph{ESAIM Control Optim. Calc. Var.} \textbf{25} (2019), Paper No. 10, 36 pp.

\bibitem{Briani2017}
\textsc{A. Briani and P. Cardaliaguet}, Stable solutions in potential mean field game systems.
\emph{NoDEA Nonlinear Differential Equations Appl.} \textbf{25} (2018), no.~1, Paper No. 1, 26 pp. 

\bibitem{BR} 
\textsc{R. Buckdahn, J. Li, S. Peng and C. Rainer}, Mean-field stochastic differential equations and associated PDEs.
\emph{Ann. Probab.} \textbf{45} (2017), no.~2, 824--878.

\bibitem{CP} 
\textsc{P. Cardaliaguet}, Notes on mean field games, from P. L. Lions' lectures at college de France, 2013,
\url{www.ceremade.dauphine.fr/~cardaliaguet/MFG20130420.pdf}.

\bibitem{CP1} 
\textsc{P. Cardaliaguet, M. Cirant and A. Porretta}, Splitting methods and short time existence for the master equations in mean field games. 
\emph{J. Eur. Math. Soc. (JEMS)} \textbf{25} (2023), no.~5, 1823--1918.

\bibitem{CDLL} 
\textsc{P. Cardaliaguet, F. Delarue, J. M. Lasry and P. L. Lions}, \emph{The master equation and the convergence problem in mean field games}. 
Annals of Mathematics Studies, 201, Princeton Univ. Press, Princeton, NJ, 2019.

\bibitem{Cardaliaguet2015}
\textsc{P. Cardaliaguet, P. J. Graber, A. Porretta and D. Tonon}, Second order mean field games with degenerate diffusion and local coupling.
\emph{NoDEA Nonlinear Differential Equations Appl.} \textbf{22} (2015), no.~5, 1287--1317.

\bibitem{CR} 
\textsc{R. A. Carmona and F. Delarue}, Forward-backward stochastic differential equations and controlled McKean-Vlasov dynamics.
\emph{Ann. Probab.} \textbf{43} (2015), no.~5, 2647--2700.

\bibitem{book_mfg} 
\textsc{R. A. Carmona and F. Delarue}, \emph{Probabilistic theory of mean field games with applications. I--II}. Probability Theory and Stochastic Modelling, 83--84, Springer, Cham, 2018.

\bibitem{CJF} 
\textsc{J. F. Chassagneux, D. Crisan and F. Delarue}, A probabilistic approach to classical solutions of the master equation for large population equilibria.
\emph{Mem. Amer. Math. Soc.} \textbf{280} (2022), no.~1379, v+123 pp.

\bibitem{DMF}
\textsc{M. F. Djete, D. Possama\"{\i} and X. Tan}, McKean-Vlasov optimal control: the dynamic programming principle.
\emph{Ann. Probab.} \textbf{50} (2022), no.~2, 791--833.

\bibitem{MR4509653}
\textsc{W. Gangbo and A. R. M\'esz\'aros}, Global well-posedness of master equations for deterministic displacement convex potential mean field games.
\emph{Comm. Pure Appl. Math.} \textbf{75} (2022), no.~12, 2685--2801.

\bibitem{GW} 
\textsc{W. Gangbo, A. R. M\'esz\'aros, C. Mou and J. Zhang}, Mean field games master equations with nonseparable Hamiltonians and displacement monotonicity.
\emph{Ann. Probab.} \textbf{50} (2022), no.~6, 2178--2217.

\bibitem{GDA} 
\textsc{D. A. Gomes, E. A. Pimentel and V. Voskanyan}, \emph{Regularity theory for mean-field game systems}.
SpringerBriefs in Mathematics, Springer, Cham, 2016.
	
\bibitem{GM} 
\textsc{P. J. Graber and A. R. M\'esz\'aros}, On monotonicity conditions for mean field games.
\emph{J. Funct. Anal.} \textbf{285} (2023), no.~9, Paper No. 110095, 45 pp.

\bibitem{YH2} 
\textsc{Y. Hu and S. Peng}, Solution of forward-backward stochastic differential equations.
\emph{Probab. Theory Related Fields} \textbf{103} (1995), no.~2, 273--283.

\bibitem{HM1} 
\textsc{M. Huang, P. E. Caines and R. P. Malham\'e}, Large-population cost-coupled LQG problems with nonuniform agents: individual-mass behavior and decentralized $\epsilon$-Nash equilibria.
\emph{IEEE Trans. Automat. Control} \textbf{52} (2007), no.~9, 1560--1571.	

\bibitem{HM} 
\textsc{M. Huang, R. P. Malham\'e and P. E. Caines}, Large population stochastic dynamic games: closed-loop McKean-Vlasov systems and the Nash certainty equivalence principle.
\emph{Commun. Inf. Syst.} \textbf{6} (2006), no.~3, 221--251.

\bibitem{HZ2} 
\textsc{Z. Huang and S. Tang}, 
Fully coupled nonlocal quasilinear forward-backward parabolic equations arising from mean field games.
2021, arxiv:2105.13213.

\bibitem{HZ1} 
\textsc{Z. Huang and S. Tang}, Mean field games with common noises and conditional distribution dependent FBSDEs. 
\emph{Chinese Ann. Math. Ser. B} \textbf{43} (2022), no.~4, 523--548.

\bibitem{HZ} 
\textsc{Z. Huang and S. Tang}, State-density flows of non-degenerate density-dependent mean field SDEs and associated PDEs.
\emph{Trans. Amer. Math. Soc.} \textbf{376} (2023), no.~3, 1553--1599.

\bibitem{JM1} 
\textsc{J. M. Lasry and P. L. Lions}, Jeux \`a{} champ moyen. I. Le cas stationnaire.
\emph{C. R. Math. Acad. Sci. Paris} \textbf{343} (2006), no.~9, 619--625.
    
\bibitem{JM2} 
\textsc{J. M. Lasry and P. L. Lions}, Jeux \`a{} champ moyen. II. Horizon fini et contr\^ole optimal.
\emph{C. R. Math. Acad. Sci. Paris} \textbf{343} (2006), no.~10, 679--684.
    
\bibitem{JM3}
\textsc{J. M. Lasry and P. L. Lions}, Mean field games.
\emph{Jpn. J. Math.} \textbf{2}(2007), no.~1, 229--260.

\bibitem{Anti_mono}
\textsc{C. Mou and J. Zhang}, Mean field game master equations with anti-monotonicity conditions.
\emph{J. Eur. Math. Soc.} (2024).

\bibitem{ORRIERI20191868}
\textsc{C. Orrieri, A. Porretta and G. Savar\'e},
A variational approach to the mean field planning problem.
\emph{J. Funct. Anal.} \textbf{277} (2019), no.~6, 1868--1957.

\bibitem{SP} 
\textsc{S. Peng and Z. Wu}, Fully coupled forward-backward stochastic differential equations and applications to optimal control.
\emph{SIAM J. Control Optim.} \textbf{37} (1999), no.~3, 825--843.

\bibitem{PH}
\textsc{H. Pham and X. Wei}, Dynamic programming for optimal control of stochastic McKean-Vlasov dynamics.
\emph{SIAM J. Control Optim.} \textbf{55} (2017), no.~2, 1069--1101.

\bibitem{PA} 
\textsc{A. Porretta}, Weak solutions to Fokker-Planck equations and mean field games.
\emph{Arch. Ration. Mech. Anal.} \textbf{216} (2015), no.~1, 1--62.

\bibitem{MR1696772} 
\textsc{J. Yong and X. Y. Zhou}, \emph{Stochastic controls. Hamiltonian systems and HJB equations}.
Applications of Mathematics (New York), 43, Springer, New York, 1999.

\end{thebibliography}
\end{document}